\documentclass[a4paper]{amsart}
\pdfoutput=1 
\usepackage[utf8]{inputenc}
\usepackage[T1]{fontenc}
\usepackage{lmodern}
\usepackage{amssymb}
\usepackage{amsmath}
\usepackage{mathtools}
\usepackage[lite]{amsrefs}
\usepackage{verbatim}

\usepackage{enumitem}
\setlist[enumerate]{font=\textup}

\usepackage{microtype}
\usepackage[pdftitle={Topologically free minimal actions without dynamical comparison},
pdfauthor={Paolo Boldrini, Akshara Prasad},
pdfsubject={Mathematics},hidelinks]{hyperref}

\newtheorem{thm}{Theorem}[section]
\newtheorem{lem}[thm]{Lemma}
\newtheorem{prop}[thm]{Proposition}
\newtheorem{cor}[thm]{Corollary}
\newtheorem*{thmIntro}{Theorem}

\theoremstyle{definition}
\newtheorem{defn}[thm]{Definition}
\newtheorem*{quest}{Question}

\newtheorem*{nota}{Notation}

\theoremstyle{remark}
\newtheorem{ex}[thm]{Example}
\newtheorem{rmk}[thm]{Remark}

\newcommand*{\Gr}{\mathcal G}

\newcommand{\bb}[1]{\mathbb{#1}}
\newcommand{\defeq}{\vcentcolon=}

\def\acts{\curvearrowright}
\def\inv{^{-1}}

\newcommand{\N}{\mathbb N}
\newcommand{\cs}{\mathrm{C}^*}

\DeclareMathOperator{\Typ}{Typ}
\DeclareMathOperator{\Aut}{Aut}
\DeclareMathOperator{\Inv}{Inv}
\DeclareMathOperator{\St}{St}
\DeclareMathOperator{\At}{At}
\DeclareMathOperator{\Clop}{Clop}

\newcommand{\mon}{\text {mon}}
\newcommand{\B}{\mathcal B}
\newcommand{\OO}{\mathcal O}
\newcommand{\D}{\mathcal D}
\newcommand{\andSep}{\quad\text{and}\quad}

\newcommand{\Fraisse}{Fraïssé }
\newcommand*{\Cont}{\mathrm C}
\newcommand*{\Contc}{\Cont_\mathrm c} 
\newcommand*{\Finf}{\mathbb F_\infty}
\newcommand*{\NF}{\bigoplus_\N \mathbb F_\infty}
\newcommand*{\Cst}{\mathrm C^*}
\newcommand*{\red}{\mathrm r}
\newcommand*{\id}{\mathrm {id}}
\newcommand*{\Z}{\mathcal Z}
\newcommand{\cl}[1]{\overline{#1}}
\newcommand*{\Gu}{\Gr^{(0)}}

\title[Topologically free minimal actions without dynamical comparison]{Topologically free minimal actions without dynamical comparison}
\author{Paolo Boldrini}
\email{paolob@chalmers.se}
\address{ Department of Mathematical Sciences, Chalmers University of
	Technology and University of Gothenburg, Göteborg SE-412 96, Sweden
}

\author{Akshara Prasad}
\email{a.prasad@uni-goettingen.de}
\address{Mathematisches Institut\\
	Georg-August-Universit\"at G\"ottingen\\
	Bunsenstra\ss e 3--5\\
	37073 G\"ottingen\\
	Germany}

\begin{document}
	
	\begin{abstract}
		We show the existence of a topologically free minimal action of~$\mathbb F_\infty$ on the Cantor space that does not have dynamical comparison. Moreover, we show that this phenomenon can happen both in the presence and in the absence of invariant measures. We also show that strict comparison of the reduced crossed product~$\cs$-algebra does not imply dynamical comparison for minimal actions.
		Our technique involves constructing a monoid which is not almost unperforated, embedding it into a countable refinement monoid and then realising it as the type semigroup associated to a dynamical system. 
	\end{abstract}
	
	\maketitle
	
	
	\section{Introduction}
	
	Given a minimal action of a countable discrete group~$G$ on a compact Hausdorff zero-dimensional space~$X$, there are two natural ways to compare the size of clopen subsets of~$X$. The first is dynamical subequivalence: a clopen~$U$ is \emph{dynamically subequivalent} to a clopen~$V$, written~$U\preccurlyeq V$, if there are a finite clopen partition ~$U=U_1\sqcup\dots\sqcup U_n$ and group elements~$g_1,\dots,g_n\in G$ such that the clopens~$g_i U_i$ are pairwise disjoint and contained in~$V$. The second is measure-theoretic in nature:~$U$ and~$V$ can be compared via the values~$\mu(U)$ and~$\mu(V)$ for all the~$G$-invariant Borel probability measures~$\mu$ on~$X$. If~$U$ is dynamically subequivalent to~$V$, then it is not difficult to see that~$\mu(U)\leq \mu(V)$ for every invariant probability measure~$\mu$. The action is said to have \emph{dynamical comparison} if a weak form of the converse implication holds:~$U\preccurlyeq V$ whenever $U$ and $V$ are nonempty and ~$\mu(U)<\mu(V)$ for every invariant probability measure~$\mu$ on~$X$.
	
	This notion is meaningful even in the absence of invariant probability measures. In this case, dynamical comparison prescribes the seemingly very strong condition that for every nonempty clopen subset~$U$, no matter how small, the whole space~$X$ is dynamically subequivalent to~$U$. 
	
	While similar notions had been studied earlier~\cites{GW:dyncompZ, Buck:Dynamical_comparison}, the term ``dynamical comparison'' was introduced by David Kerr in his seminal work on classifiability of crossed products \cite{Kerr:Dynamical_Toms-Winter} as a dynamical version of strict comparison of positive elements in a C*-algebra. 
	Given a normalised trace~\(\tau\) on a C*-algebra~\(A\), the dimension function induced by~\(\tau\)
	is defined to be~\(d_\tau(x)=\lim_{n}\tau(x^{1/n})\) for~\(x\in (A\otimes\mathbb{K})_+\). 
	A unital simple C*-algebra~\(A\) is said to have strict comparison if for any positive elements~\(a,b\in A\), there exists a sequence~\((x_n)_n\) of elements of $A$ with~\(a=\lim_nx_nbx_n^*\) whenever~\(d_\tau(a)<d_\tau(b)\) for all traces~\(\tau\) on~\(A\) with~\(d_\tau(b)<\infty\).
	Loosely speaking, the dynamical analogue of traces are \(G\)-invariant Borel measures that are (outer and inner) regular on the space~\(X\), making dynamical comparison an appropriate dynamical analogue of strict comparison.
	Interest in this notion rapidly grew thanks to the work of Kerr and Szabó \cite{KS_AF-and-SBP}, who showed that crossed products resulting from free minimal actions of amenable groups can be classified by the Elliott invariant whenever the actions have dynamical comparison and the small boundary property.
	
	The definition of dynamical comparison extends naturally beyond the zero-dimensional setting to arbitrary actions on compact Hausdorff spaces, where open subsets replace clopens in the definition of dynamical subequivalence; see Definition \ref{def:dyncomp} and \cite{Ma:type_semigroups_comparison} for the general framework. In this broader setting, the condition has been verified in considerable generality over the last decade, both for actions of amenable groups \cites{DZ_Comparison-property, KN_el-amenable-AF, Naryshkin_Pol-growth-comparison-SBP, Nasryshkin_Group-ext-AF, NP_AF-grp-dynamical-origin} and of nonamenable groups \cite{GGKN_cp-nonamenable}. 
	
	Even though significant progress has been made, the following fundamental question remains open.
	\begin{quest}\label{qst.mainqst}
		Does every topologically free minimal amenable action of a countable discrete group on a compact metrisable space have dynamical comparison?
	\end{quest}
	
	The only known partial counterexample to this question arises from separated graph algebras; Ara and Exel showed in \cite{Ara-Exel:Dynamical_systems} that there exist group actions on compact spaces that fail dynamical comparison. However, these actions are far from minimal, since they have a fixed point by construction.
	
	A natural tool for both detecting dynamical comparison and potentially constructing counterexamples is the type semigroup of the action. The type semigroup of a group action on a zero-dimensional compact Hausdorff space~$X$ is a commutative monoid built as a quotient of the set of compactly supported continuous functions on $X$ with values in $\N$ modulo a certain equidecomposability relation. 
	While the original construction of type semigroups can be attributed to Tarski, who used it to study the Banach--Tarski paradox, they have received more attention in the last decade. Versions of type semigroups for groupoids have been employed to study pure infiniteness and stable finiteness of the associated groupoid C*-algebras (see \cites{Rainone-Sims:Dichotomy, Boenicke-Li:Ideal, Ma_Comparison-pure-infiniteness, Kwasniewski-Meyer-Prasad:Type_semigroups}). 
	Such a type semigroup can be viewed as a dynamical analogue of the Cuntz semigroup or the positive cone of the K-theory of a C*-algebra.
	Given an action of a countable discrete group, dynamical comparison has been proved to be equivalent to almost unperforation of the associated type semigroup (see \cites{Ara_Bonicke_Bosa_Li:type_semigroup, Kwasniewski-Prasad-Thiel-Wu:unperforation_comparison}). Recall that a commutative monoid~$M$ is almost unperforated if, whenever~$(n+1)a\leq nb$ for some~$a,b\in M$ and~$n\geq 1$, one already has~$a\leq b$. This is a  condition that helps rule out certain pathological order theoretic behaviours. Furthermore, the type semigroup also detects minimality of the action: a dynamical system is minimal if and only if the type semigroup is simple \cite[Lemma 2.2]{Ara_Bonicke_Bosa_Li:type_semigroup}, meaning that it has no nontrivial order ideals.
	
	Together, these two results suggest a concrete strategy for constructing a minimal action without dynamical comparison: first construct a simple and non almost unperforated commutative monoid, and then realise it as the type semigroup of a countable group action. The action so obtained will automatically be minimal and will fail dynamical comparison.
	
	The problem of realising prescribed monoids as type semigroups was studied by Wehrung \cite[Chapter 4]{Wehrung:Monoids_Boolean}, whose work shows that a broad class of commutative monoids can be realised this way. A central tool in the construction are \emph{monoid-valued measures}. Given a commutative monoid~$M$, an~$M$-valued measure on a Boolean algebra~$B$ is a map~$\mu\colon B\to M$ that is additive in the sense that~$\mu(a\lor b)=\mu(a)+\mu(b)$ for orthogonal~$a,b\in B$, and that satisfies~$\mu\inv \{0_M\}=\{0_B\}$. Among such measures, groupoid-induced and group-induced measures (see Definition \ref{defn:group_measurable}) play a crucial role.
	
	The main purpose of this paper is to implement this strategy to find the first example of a minimal action on a compact space that fails dynamical comparison. Even though topological freeness is not detected at the level of the type semigroup, we employ a Baire category argument to show that even minimal and topologically free actions can fail dynamical comparison. More specifically, we show:
	
	\begin{thmIntro}
		There exist topologically free minimal actions of~$\mathbb F_\infty$ on the Cantor space without dynamical comparison. Moreover, these actions can be chosen to be either Bernoulli-measure-preserving, or without any invariant probability measure.
	\end{thmIntro}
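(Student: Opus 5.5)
The plan follows the strategy outlined in the introduction. There are four steps: build a bad monoid, embed it in a realisable one, realise it dynamically, and then upgrade to topological freeness.

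\textbf{Step 1: a simple, not almost unperforated monoid.} First I construct a countable simple commutative monoid $M$ that is not almost unperforated. The most concrete choice is a finitely generated simple monoid with elements $a,b$ such that $2a\le b+b$ but $a\not\le b$. Equivalently, I present $M$ by generators and relations, with a unit $u$ forcing simplicity through relations of the form $u\le n x$ for each generator $x$. To see that $a\le b$ fails, I exhibit a quotient or a homomorphism into a known monoid that separates them. To control the presence of states, I use two variants.
\begin{itemize}
\item[(i)] A version admitting a state $s$ with $s(a)=s(b)$. This is meant to match Bernoulli measure.
\item[(ii)] A purely infinite version, with $2u\le u$, so that no invariant probability measure exists.
\end{itemize}

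\textbf{Step 2: embed $M$ into a countable refinement monoid.} Next I embed $M$ into a countable conical refinement monoid $R$ by a map that preserves simplicity and the failure $a\not\le b$. I would use the standard iterative refinement construction: repeatedly adjoin refinements for each equation $x_1+x_2=y_1+y_2$, and take the directed union. At each stage I check, via an order-embedding or an order-detecting homomorphism, that $a\not\le b$ persists. I expect this to be the main obstacle. Adding refinements tends to create new order relations, so I must show the embedding is an order-embedding, or at least that the specific relation $a\le b$ is not forced. My first attempt would be to construct an explicit refinement monoid containing $M$, for example a monoid of dimension vectors or of rational functions, where the inequality can be checked directly. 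Simplicity of $R$ also has to be preserved, which I would handle by adding relations $u\le n x$ for the new generators.

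\textbf{Step 3: realise $R$ dynamically.} Using Wehrung's results (Chapter 4 of the monograph), I realise the countable refinement monoid $R$ as the image of a group-induced measure $\mu\colon B\to R$ on a countable Boolean algebra $B$. The group acting is countable, and after taking a free product I may replace it by $\mathbb F_\infty$. Stone duality gives an action of $\mathbb F_\infty$ on the Cantor space $X$ whose type semigroup is $R$. By \cite[Lemma 2.2]{Ara_Bonicke_Bosa_Li:type_semigroup}, simplicity of $R$ makes the action minimal. By the equivalence between dynamical comparison and almost unperforation, $2a\le 2b$ with $a\not\le b$ shows dynamical comparison fails. In case (ii), $R$ admits no state, so $X$ carries no invariant probability measure. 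In case (i), I would arrange the Boolean algebra and measure so that a Bernoulli product measure is invariant.

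\textbf{Step 4: topological freeness by a Baire argument.} The group $\mathbb F_\infty$ has many generators that are not needed to realise $R$. I consider the Polish space of all actions in which the extra generators act by homeomorphisms preserving the type-semigroup relations. These are, for instance, homeomorphisms in the topological full group, so that the type semigroup is unchanged, and measure-preserving ones in case (i). For each $g\ne e$ and each basic clopen set $U$, the set of actions in which $g$ does not fix $U$ pointwise is open and dense. Density holds because the full group contains elements that move points of $U$, and it can be perturbed freely on free generators. Intersecting these countably many open dense sets gives a comeagre set of topologically free actions. Since every action in this set retains the type semigroup $R$, they all fail dynamical comparison and are minimal.
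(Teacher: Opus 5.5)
There are genuine gaps, beginning with the property you are trying to violate. In Steps~1 and~3 you use the relation $2a\le 2b$ with $a\not\le b$. That is perforation, not a failure of almost unperforation, which requires $(n+1)a\le nb$ with $a\not\le b$. From $2a\le 2b$ you only get $\nu(a)\le\nu(b)$ for every state, never the strict inequality that the definition of comparison needs. For example, $\{0\}\cup(\mathbb Q_{>0}\times\mathbb Z/2\mathbb Z)$ is a countable simple refinement cone that is almost unperforated, although $2(1,0)=2(1,1)$ and $(1,0)\not\le(1,1)$. So this relation alone cannot obstruct comparison.

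In your stateless variant the slip is harmless, since there every $a\not\le b$ already violates almost unperforation. Variant~(i), however, is built around the wrong relation. For a genuine witness $(n+1)a\le nb$, every state with $0<s(a)<\infty$ satisfies $s(a)\le\frac{n}{n+1}s(b)<s(b)$, so $s(a)=s(b)$ is impossible. Moreover, a state only gives \emph{some} invariant measure, not the Bernoulli one. The paper instead takes $P=M\times_c(\mathbb Q_2)_+$ (Theorem~\ref{thm.constructionP}), whose projection $\tau$ is a V-homomorphism. Then $\tau\circ\mu$ is a $(\mathbb Q_2)_+$-valued V-measure on the Cantor algebra, and Dobbertin's uniqueness of V-measures conjugates it to the Bernoulli measure. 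Your ``I would arrange'' has nothing playing this role.

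Step~2 is also left open. The paper closes it with Wehrung's theorem that the small perforated cone order-embeds into a simple refinement cone, followed by passing to a countable subcone closed under refinement witnesses. Imposing relations $u\le nx$ by hand, as you suggest, may well force $a\le b$.

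Step~4 fails as designed. You keep the generators that realise $R$ fixed, acting through a countable group $H$. Then every nontrivial word in those generators lying in the kernel of $\Finf\to H$ acts as the identity however the extra generators are chosen. The corresponding set of actions is therefore empty, not dense. Topological freeness would need $H$ to be free and to act topologically freely already, which is exactly what has to be proved.

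The missing idea is to vary \emph{all} generators in a Polish group realising the cone, namely $\Aut(B,\mu)$ for the homogeneous Fra\"iss\'e limit $(B,\mu)$. The type-semigroup information is then retained through density rather than by fixing generators. A generic sequence generates a dense subgroup (Lemma~\ref{lem.dense}). A dense subgroup is still minimal and has the same invariant measures on amplifications as $\Aut(B,\mu)$, hence still fails comparison (Theorem~\ref{thm.minactnocomp}). Even then, density of the sets $F_{w,c}$ is not a matter of ``perturbing freely'' when several generators interact in $w$. It rests on the swap construction of Lemma~\ref{lem:simplifytopfreeness}, which uses $2$-divisibility, the Vaught property and homogeneity, together with the induction on word length in Proposition~\ref{thm.generictopfreenessnew}.
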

	
	The proof proceeds in four steps. We first construct a countable, simple, refinement cone which lacks almost unperforation.	We then realise such a cone as the type semigroup of the natural action of the (usually uncountable) Polish group~$\Aut(B,\mu)$ on the Stone space~$\St(B)$, where~$B$ is the Cantor algebra, and~$\mu$ is a suitably chosen monoid-valued measure. In the third step, we show that any countable dense subgroup of~$\Aut(B,\mu)$ acts minimally and without dynamical comparison on~$\St(B)$. Then, we show that
	a generic countable subgroup of~$\Aut(B,\mu)$ (in a suitable Baire category sense) is dense, is isomorphic to~$\mathbb F_\infty$, and acts topologically freely on~$\St(B)$. The two variants of the theorem, with and without invariant measures, correspond to two different choices of the monoid in the first step.
	
	In the study of Cuntz semigroups and type semigroups, structural information about C*-algebras has long been encoded in preordered monoids. However, to the best of our knowledge, preordered monoids have not previously been used as a tool for constructing examples or counterexamples at the level of C*-algebras themselves via Cuntz or type semigroups. In this sense, the present work represents a reversal of perspective: rather than passing from C*-algebras to preordered monoids, we proceed in the opposite direction, using preordered monoids to derive new and interesting results in the setting of dynamical systems and C*-algebras.\medbreak
	
	\noindent\textit{Acknowledgments.} The authors would like to thank Pere Ara for giving the idea about a minimal action whose type semigroup is not almost unperforated. The authors are also grateful to Eusebio Gardella, Ralf Meyer and Hannes Thiel for several useful discussions. PB was supported by the Kungl. Vetenskapsakademien (MA2025-0041). AP was funded by the Deutsche Forschungsgemeinschaft (German Research Foundation) – 398436923 (RTG2491). 
	
	
	\section{Preliminaries}\label{sec.prelim}
	
	In the construction of dynamical systems lacking comparison, we will employ several algebraic structures. We use this section to furnish various definitions and constructions from monoid theory that will be relevant to future sections. A more exhaustive treatment of inverse semigroups, preordered monoids, and lattices can be found in \cites{Wehrung:Monoids_Boolean, Lawson:Inverse_semigroups_2026, Lawson:Inverse_semigroups}. We will also describe Stone duality along with its noncommutative version. The section ends by giving a brief description of dynamical comparison and recalling systems which are known to have the property. We include the proofs of some known results when they are not easily found in the literature. 
	
	\subsection{Stone duality}
	
	We call an algebraic structure \((R,+,\cdot)\) a \emph{ring} if \((R,+)\) is an abelian group, \((R,\cdot)\) is a semigroup and if \(\cdot\) is left- and right-distributive over \(+\). A ring with a multiplicative unit is called a \emph{unital ring}. A ring is said to be \emph{Boolean} if
	$a^2=a$ for every element~$a$. 
	It follows from the definition that every Boolean ring \(B\) is commutative and satisfies \(2a=0\) for every \(a\in B\). 
	The natural ordering on a Boolean ring~\(B\) is given for~\(a,b\in B\) as~\(a\leq b\) if and only if~\(ab=a\)
	We define a new partial binary operation on~\(B\) as follows: whenever~\(a,b\in B\) with~\(ab=0\), set~\(a\oplus b=a+b\).
	
	A \emph{lattice} is a set~\(L\) together with binary operations~\(\lor\) and~\(\land\) which are associative, commutative, idempotent and satisfy~\(a\lor(a\land b)=a\) and~\(a\land(a\lor b)=a\) for every~\(a,b\in L\). The lattice is said to be \emph{distributive} if the operation~\(\land\) distributes over~\(\lor\). For elements~\(a,b\in L\), the natural partial order is given by~\(a\leq b\) if and only if~\(a\land b=a\) if and only if~\(a\lor b=b\).
	If \((B,+,\cdot)\) is a Boolean ring, the operations~$a\land b \defeq ab$,~$a\lor b\defeq a+b-ab$, and~$a\setminus b\defeq a-ab$ give~$B$ the structure of a \emph{generalized Boolean algebra}, that is, a distributive lattice~\((L,\lor,\land)\) with a minimal element~$0$ together with a binary operation~$\setminus$ satisfying~$0=a\land(b\setminus a)$ and~$a=(a\land b)\lor(a\setminus b)$ for all~$a,b\in L$. If the ring has a unit~$1$, then the generalized Boolean algebra~$B$ has a maximum, and~$B$ is said to be a \emph{Boolean algebra}.
	We denote by~\(\neg b\) the \emph{orthogonal complement}~\(1\setminus b\) of an element~\(b\in B\).
	
	\begin{ex}\label{ex:power_set_algebra}
		Given a set~\(X\), the power set~\(2^X\) is a Boolean algebra. Indeed, define the join to be the union of sets and the meet to be their intersection. Relative complement of sets satisfies the axioms of the operation~\(\setminus\). 
		Since~\(\emptyset\cap A=\emptyset\) and~\(X\cap A=A\) for any set~\(A\subseteq X\), the empty set is the 0-element and~\(X\) is the maximum. The orthogonal complement of a set will be its complement within~\(X\). All finite Boolean algebras are isomorphic to power sets of finite sets. However, there are infinite Boolean algebras which are not isomorphic to power sets.
	\end{ex}
	
	It is also possible to define a Boolean ring structure from the Boolean algebra operations by setting~$a\cdot b\defeq a\land b$ and~$a+b\defeq(a\lor b)\setminus (a\land b)$;~$0$ becomes the minimum and~$1$ becomes the maximum of the lattice. This interdefinability makes the study of Boolean rings and generalized Boolean algebra essentially equivalent. Throughout the paper, we will stick to the (generalized) Boolean algebra terminology, but we will use the Boolean ring symbols when convenient. 
	
	Stone established the aforementioned equivalence and further showed that such structures are in correspondence with compact zero-dimensional Hausdorff topological spaces \cites{Stone:Boolean_rings_algebras, Stone:Algebraic_Stone_duality}.
	While Stone's results were algebraic in nature, Doctor proved the following categorical equivalence in \cite{Doctor:Stone_duality}:
	
	\begin{thm}[Stone Duality]
		There is a contravariant equivalence between the category of Boolean algebras (with Boolean algebra homomorphisms) and the category of compact Hausdorff zero-dimensional spaces (with continuous maps). The equivalence assigns to a Boolean algebra its space of ultrafilters, and to a Stone space the Boolean algebra of its clopen subsets.
	\end{thm}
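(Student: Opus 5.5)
We prove the duality by writing down the two functors, constructing natural isomorphisms to the identity in each direction, and then checking how morphisms behave. In one direction, a Boolean algebra~$B$ goes to its Stone space~$\St(B)$. This is the set of ultrafilters of~$B$, topologised by the basic sets
$$N_a \defeq \{x\in\St(B): a\in x\},\qquad a\in B.$$
A homomorphism~$\varphi\colon B\to C$ goes to the map~$\varphi^*\colon\St(C)\to\St(B)$, $x\mapsto\varphi\inv(x)$. In the other direction, a compact Hausdorff zero-dimensional space~$X$ goes to~$\Clop(X)$, and a continuous map~$f\colon X\to Y$ goes to~$f\inv\colon\Clop(Y)\to\Clop(X)$. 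Functoriality and contravariance are immediate in both cases.

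First we check that~$\St(B)$ is an object of the target category. Since $N_a\cap N_b=N_{a\land b}$ and $N_{\neg a}=\St(B)\setminus N_a$, each basic set~$N_a$ is clopen. Hausdorffness follows because two distinct ultrafilters are separated by some~$a$ lying in one of them, with~$\neg a$ in the other. For compactness we cover~$\St(B)$ by basic sets~$N_{a_i}$ and suppose no finite subfamily covers. Then the elements~$\neg a_i$ generate a proper filter, and by Zorn's lemma (the ultrafilter lemma) this filter extends to an ultrafilter that is not covered, a contradiction. The space is zero-dimensional because the~$N_a$ form a clopen base. We also need~$\varphi^*$ to be well defined and continuous. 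The preimage of an ultrafilter under a unital homomorphism is again an ultrafilter, and $(\varphi^*)\inv(N_a)=N_{\varphi(a)}$.

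Next come the two unit maps. For~$B$, we show that $a\mapsto N_a$ is an isomorphism $B\to\Clop(\St(B))$. It is a homomorphism by the identities above. It is injective because every $a\neq 0$ lies in some ultrafilter, again by the ultrafilter lemma applied to the principal filter of~$a$. For surjectivity, a clopen set is open, so it is a union of basic sets. It is also closed, hence compact, so a finite union suffices, and $N_{a_1}\cup\dots\cup N_{a_n}=N_{a_1\lor\dots\lor a_n}$. For~$X$, we send $x\mapsto\{U\in\Clop(X):x\in U\}$, which is an ultrafilter on~$\Clop(X)$. This map is injective because~$X$ is Hausdorff and zero-dimensional, so clopens separate points. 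It is surjective because any ultrafilter is a family of closed sets with the finite intersection property. By compactness the family has a common point~$x$, and the ultrafilter must then equal the set of clopens containing~$x$. The map is continuous because it pulls~$N_U$ back to~$U$. It is therefore a continuous bijection from a compact space to a Hausdorff space, hence a homeomorphism.

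Naturality of both unit maps is a direct unwinding of the definitions. It remains to see that the functors act bijectively on hom-sets, and this follows formally once both natural isomorphisms are in place. The only genuinely nontrivial inputs are the ultrafilter lemma and the two compactness arguments: surjectivity of $a\mapsto N_a$, and surjectivity of $X\to\St(\Clop(X))$. Surjectivity of $X\to\St(\Clop(X))$ is the step that actually uses zero-dimensionality, and I expect it to be the main point requiring care.
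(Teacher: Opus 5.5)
Your proof is correct: it is the standard textbook argument, and every step checks out. You verify that $\St(B)$ is compact Hausdorff zero-dimensional and that $\varphi^*$ is well defined and continuous. You prove that $a\mapsto N_a$ and $x\mapsto\{U\in\Clop(X)\colon x\in U\}$ are isomorphisms, and you check that they are natural. That the two functors form an equivalence then follows formally. The paper does not prove this statement at all; it only cites Stone and Doctor. So there is no competing route to compare against.

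One correction to your closing commentary. Surjectivity of $X\to\St(\Clop(X))$ uses only compactness. It holds for every compact space: for $X=[0,1]$ the target is a single point. Zero-dimensionality is needed for injectivity, where clopen sets must separate points, and that is exactly where you correctly invoked it in the body of the proof.

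A minor point of exposition in the compactness argument for $\St(B)$: say explicitly that it suffices to treat covers by basic sets $N_a$. Also say that the absence of a finite subcover gives $\neg a_{i_1}\land\dots\land\neg a_{i_n}\neq 0$, since $N_{a_{i_1}}\cup\dots\cup N_{a_{i_n}}=N_{a_{i_1}\lor\dots\lor a_{i_n}}$ and $N_1=\St(B)$. Only then is the filter generated by the $\neg a_i$ proper.
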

	
	The non-unital version of Stone duality can also be found in \cite{Doctor:Stone_duality}. We will freely move back and forth between the equivalent categories described in the previous theorem. We will denote the Stone space associated to a Boolean algebra by~\(\St(B)\). The topology on~\(\St(B)\) is generated by basic open sets of the form~\(\Omega_a=\{p\in\St(B)\colon a\in p\}\) for~\(a\in B\).	
	
	An \emph{atom} in a Boolean algebra is a nonzero element~\(a\) such that for any other element~\(b\) in the algebra with~\(b\leq a\), we have either~\(b=a\) or~\(b=0\). 
	Given a Boolean algebra~\(B\), we denote its set of atoms by~\(\At(B)\).
	Finite Boolean algebras are determined uniquely (up to isomorphism) by their set of atoms.
	In every power set algebra, singletons are atoms. Hence, by Example \ref{ex:power_set_algebra}, every atomless algebra with more than one element must be infinite.	
	Tarski proved that there is only one atomless countably infinite Boolean algebra up to isomorphism (see \cite[Chapter 16, Theorem 10]{Givant-Halmos:Boolean_algebras}). This corresponds to the Cantor space under Stone duality since the Cantor space is (up to homeomorphism) the unique non-empty, Hausdorff, perfect, compact, zero-dimensional, metrisable space. So we will call an atomless, countably infinite Boolean algebra the \emph{Cantor algebra}.
	
	A noncommutative version of Stone duality was proved by Lawson \cite{Lawson:General_Noncommutative_Stone} and it involves Boolean inverse semigroups and ample groupoids. While Boolean inverse semigroups can be considered as the noncommutative analogues of Boolean algebras, ample groupoids correspond to a generalisation of zero-dimensional spaces. Given an ample groupoid, we will later construct type semigroups associated to the Boolean inverse semigroup of compact open bisections of the groupoid. We shall now describe these terms.
	
	By an \emph{inverse semigroup} we mean a set~\(S\) endowed with an associative binary operation such that for every~\(x\in S\), there exists a unique~\(x\inv\in S\) with
	\[x=xx\inv x\quad\text{and}\quad x\inv=x\inv xx\inv.\]
	Two elements~\(x\) and~\(y\) are said to be \emph{compatible} if~\(x\inv y\) and~\(xy\inv\) are both idempotents.
	Idempotents are always compatible with each other.
	The natural partial order on an inverse semigroup~\(S\) is given by~\(a\leq b\) if~\(a=ba\inv a\). The join of two elements in an inverse semigroup, if it exists, is their supremum with respect to the natural partial order. The meet is defined analogously to be the infimum. 
	
	\begin{defn}
		An inverse semigroup~\(S\) is said to be \emph{Boolean} if it satisfies the following conditions.
		\begin{enumerate}
			\item The idempotent lattice~\(E(S)\) is a generalised Boolean algebra.
			\item The join of any two compatible elements exists in~\(S\).
			\item For compatible elements~\(x,y\in S\) and any~\(z\in S\), both~\(zx\lor zy\) and~\(xz\lor yz\) exist and satisfy~\(z(x\lor y)=zx\lor zy\) and~\((x\lor y)z=xz\lor yz\).
		\end{enumerate}
	\end{defn}
	
	A \emph{groupoid} is a set~\(\Gr\) together with an involutive inverse map, and a transitive and associative partial product map
	such that the product of an element and its inverse always exists, and~\(x\inv xy=y\) and~\(xyy\inv=x\) for any pair~\(x,y\) whose product exists. This is a generalisation of a group wherein there are possibly several units and multiplication is only defined partially. The range map $r$ on a groupoid sends an element~\(x\) to the unit~\(xx\inv\) while the source map $s$ sends it to~\(x\inv x\). We consider groupoids equipped with a topology which makes the product and inverse maps continuous. 
	All the groupoids that appear in this article are assumed to have a locally compact Hausdorff unit space~\(\Gu\).
	A groupoid is said to be \emph{\'etale} if the range map is a local homeomorphism. Every \'etale groupoid has a basis consisting of open bisections, that is, open subsets over which the range and source maps are homeomorphisms. An \'etale groupoid is said to be \emph{ample} if it has a zero-dimensional unit space.
	
	\begin{ex}\label{ex:B_is_Boolean}
		Let~\(\Gr\) be a locally compact Hausdorff \'etale ample groupoid and~\(\Gu\) its unit space. For example, the transformation groupoid arising from the action of a discrete group on the Cantor space.
		The collection~\(\B\) of all compact open bisections of~\(\Gr\) forms a Boolean inverse semigroup, where the product of two bisections~\(U\) and~\(V\) is given by ~\(UV\defeq\{\alpha\beta\colon \alpha\in U, \beta\in V, s(\alpha)=r(\beta)\}\). The inverse of an element in~\(\B\) is the inverse of the bisection in the groupoid. 
		The idempotent lattice of~\(\B\) is the set~\(\OO\defeq\{U\subseteq \Gu\colon  U~\text{is compact open}\}\) and it inherits a generalised Boolean algebra structure as a sublattice of~\(2^{\Gu}\) (see Example \ref{ex:power_set_algebra}). If~\(\Gu\) is compact, then~\(\OO\) is a Boolean algebra.
		The natural order is the containment of sets. So the join of two elements, when it exists, is just their union. 
	\end{ex}
	
	Lawson \cite{Lawson:General_Noncommutative_Stone} defined categories of Boolean inverse semigroups and of ample groupoids equipped with specific kinds of morphisms. While we are not interested in these morphisms, it is worth noting Lawson's result which establishes a bijection between objects of these categories:
	
	\begin{thm}[Noncommutative Stone duality]
		The category of Boolean inverse semigroups is equivalent to the opposite category of ample groupoids.
	\end{thm}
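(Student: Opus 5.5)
The strategy is to build the equivalence through the two functors that Lawson uses, and check that each round trip is naturally isomorphic to the identity. This parallels the proof of Stone duality above, with ultrafilters of the Boolean algebra replaced by prime filters of a Boolean inverse semigroup.

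\textbf{From groupoids to semigroups.} To an ample groupoid~$\Gr$ we assign the Boolean inverse semigroup~$\B(\Gr)$ of compact open bisections, which Example~\ref{ex:B_is_Boolean} shows is Boolean.

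\textbf{From semigroups to groupoids.} To a Boolean inverse semigroup~$S$ we assign the groupoid~$\Gr(S)$ of prime filters of~$S$; we use prime filters rather than ultrafilters because in the non-unital case these are the right points.
\begin{enumerate}
\item For filters~$F$ and~$G$ with $(F^{-1}F)^{\uparrow}=(GG^{-1})^{\uparrow}$, the product is $(FG)^{\uparrow}$, and the inverse is $F^{-1}$.
\item The topology is generated by the sets $V_s=\{F\colon s\in F\}$ for $s\in S$.
\item Restricted to idempotents, the prime filters of $S$ are exactly the prime filters of the generalised Boolean algebra~$E(S)$. So the unit space is the Stone space of~$E(S)$ given by the non-unital Stone duality of~\cite{Doctor:Stone_duality}. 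In particular it is locally compact, Hausdorff and zero-dimensional.
\item The $V_s$ are compact open bisections, using that $F\mapsto (F^{-1}F)^{\uparrow}$ is injective on each~$V_s$. Hence $\Gr(S)$ is étale, and it is ample.
\end{enumerate}

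\textbf{The natural isomorphisms.} On the semigroup side, the map $s\mapsto V_s$ gives $S\cong\B(\Gr(S))$. It is a semigroup homomorphism because $V_sV_t=V_{st}$. It is injective by a prime filter separation argument, which reduces to Stone duality for $E(S)$ applied to $s^{-1}s$ and~$t^{-1}t$. It is surjective because a compact open bisection is a finite union of sets~$V_{s_i}$. These $s_i$ are pairwise compatible, so their join exists by the Boolean axioms, and $V_{\bigvee s_i}=\bigcup V_{s_i}$ by distributivity. On the groupoid side, the map $\gamma\mapsto\{U\in\B(\Gr)\colon\gamma\in U\}$ gives $\Gr\cong\Gr(\B(\Gr))$. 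It is a homeomorphism of topological groupoids because compact open bisections form a basis of~$\Gr$.

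\textbf{Morphisms.} Lawson's morphisms are callitic (proper) morphisms of Boolean inverse semigroups on one side and coverage-type continuous functors on the other. We check that both constructions are functorial for these classes, and that the isomorphisms above are natural in them.

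\textbf{Main obstacle.} The hard step is the surjectivity of $s\mapsto V_s$, namely showing that every compact open bisection arises as some~$V_s$. This is exactly where axioms (2) and (3) of a Boolean inverse semigroup are needed, together with the fact that a prime filter containing a join $\bigvee s_i$ contains one of the~$s_i$.
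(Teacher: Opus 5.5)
The paper gives no proof of this statement. It quotes the result from Lawson and only ever uses the object-level correspondence. Your outline (prime filters, $s\mapsto V_s$, $\gamma\mapsto\{U\colon\gamma\in U\}$) reconstructs Lawson's argument, and the overall route is the right one. However, the injectivity step fails as you describe it.

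Injectivity of $s\mapsto V_s$ does not reduce to Stone duality for $E(S)$ applied to $s^{-1}s$ and $t^{-1}t$. From $V_s=V_t$ you only learn $V_{s^{-1}s}=V_{t^{-1}t}$, i.e.\ $s^{-1}s=t^{-1}t$, and that says nothing about whether $s=t$. A counterexample: take $S=G\cup\{0\}$ with $G$ a nontrivial group. This is a Boolean inverse monoid with $E(S)=\{0,1\}$. Distinct $g,h\in G$ have the same domain $1$, so nothing in $E(S)$ distinguishes them, yet $V_g=\{\{g\}\}\neq\{\{h\}\}=V_h$.

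What is missing is a separation lemma:
\begin{enumerate}
\item Suppose $a\not\leq b$ and set $I=\{e\in E(S)\colon e\leq a^{-1}a,\ ae\leq b\}$. This is an ideal of the Boolean algebra $E(S)\downarrow a^{-1}a$; closure under joins uses axiom (3) and the fact that elements below $b$ are pairwise compatible.
\item The ideal $I$ does not contain $a^{-1}a$, because $a\,a^{-1}a=a\not\leq b$.
\item Choose a prime filter $P$ of $E(S)$ with $a^{-1}a\in P$ and $P\cap I=\emptyset$. Then $(aP)^{\uparrow}$ is a prime filter containing $a$ but not $b$: if $ap\leq b$ with $p\in P$, then $p\,a^{-1}a\in P\cap I$.
\end{enumerate}
Applying this in both directions gives $s\leq t\leq s$.

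Your surjectivity step silently relies on the same lemma. Knowing that $V_{s_i}\cup V_{s_j}$ is a bisection gives $V_{s_i^{-1}s_j}\subseteq\Gr(S)^{(0)}$. So every prime filter containing $x=s_i^{-1}s_j$ is an identity filter and hence contains $x^{-1}x$. Only the separation lemma upgrades this to $x\leq x^{-1}x$, i.e.\ $x$ idempotent, and similarly for $s_is_j^{-1}$. Without it you do not know that the $s_i$ are compatible, so their join need not exist. The technical core is therefore this lemma; surjectivity is routine once it is available.

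A smaller mismatch: Example~\ref{ex:B_is_Boolean} assumes the groupoid is Hausdorff, but $\Gr(S)$ is in general not Hausdorff (only its unit space is). For both functors to land in the same category, you must allow non-Hausdorff ample groupoids, as the paper's standing convention does. You then need to check directly that $\B(\Gr)$ is Boolean in that generality.
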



	\subsection{Vaught measures}
	
	A \emph{preordered commutative monoid} is a commutative monoid~$(M,0,+)$ with a preorder~$\leq$ compatible with the addition. Every commutative monoid~\(M\) is a preordered monoid when equipped with the \emph{algebraic preorder}:~\(a\leq b\) if and only if there exists~\(c\in M\) with~\(a+c=b\). 
	When~\(\le\) is a
	partial order, we call~\(M\) an \emph{ordered \textup{(}commutative\textup{)} monoid}.
	
	\begin{defn}
		A \emph{cone} is a preordered commutative monoid equipped with the algebraic preorder and such that~$a+b=0$ implies~$a=b=0$ for any elements~\(a\) and~\(b\).
		A \emph{(conical) homomorphism} is a map between cones which is additive and such that the pre-image of 0 is precisely 0. A \emph{V-homomorphism} $f\colon M\to N$ is a conical homomorphism that satisfies the Vaught property: whenever $f(m)=n_1+n_2$ there are elements $m_1,m_2\in M$ such that $m=m_1+m_2$ and $f(m_i)=n_i$.  
	\end{defn}
	
	In the literature, a cone is sometimes defined to also be equipped with a scalar multiplication by positive real numbers. We do not ask this of our cones.
	
	\begin{defn}\label{defn:state}
		An
		element \(x\in M\setminus\{0\}\) is \emph{paradoxical} if
		there exists~\(n\ge 1\) with~\((n+1)x\leq nx\).
		A \emph{state} on a cone~\(M\) is an additive
		and order-preserving map \(\nu\colon M\to [0,\infty]\) with
		\(\nu(0)=0\) or, equivalently, \(\nu \not\equiv \infty\).
	\end{defn}
	
	A state is \emph{trivial} if it takes only the values \(0\) and~\(\infty\),
	and \emph{nontrivial} otherwise. 
	The following theorem by Tarski \cite{Tarski1938} shows that cones admit a dichotomy between paradoxical elements and existence of nontrivial finite states.
	
	\begin{thm}[Tarski's theorem]
		\label{thm:original_Tarski}
		In any cone~\(S\), an element
		\(y\in S\setminus \{0\}\) is not paradoxical if and only if there is
		a state \(\nu\colon S\to [0,\infty]\) with \(\nu(y)=1\).
	\end{thm}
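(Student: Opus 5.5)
The easy direction goes first. Suppose \(\nu\) is a state with \(\nu(y)=1\), and suppose for contradiction that \((n+1)y\leq ny\) for some \(n\ge 1\). Since \(\nu\) is additive and order-preserving, this gives \(n+1=\nu((n+1)y)\leq \nu(ny)=n\), which is absurd. So \(y\) is not paradoxical.

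For the converse, assume \(y\neq 0\) is not paradoxical. The plan is to build a state from an asymptotic counting function, in the spirit of Tarski's original argument.

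\textbf{Step 1: a sublinear functional.} Let \(S_y\) be the order ideal generated by \(y\), namely the set of \(x\) with \(x\leq my\) for some \(m\). For \(x\in S_y\) set
\[
p(x)\defeq \inf_{k\ge 1}\frac{\min\{m\in\N\colon kx\leq my\}}{k}.
\]
Subadditivity of \(k\mapsto\min\{m\colon kx\le my\}\), together with Fekete's lemma, shows that the infimum is a limit. The function \(p\) is subadditive and positively homogeneous on \(S_y\), it is monotone, and \(p(y)\le 1\).

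\textbf{Step 2: the key claim \(p(y)=1\).} This is where non-paradoxicality enters. Suppose \(p(y)<1\). Then there are \(k\) and \(m<k\) with \(ky\le my\), so \(ky\leq (k-1)y\) with \(k-1\ge 1\). Hence \((n+1)y\le ny\) for \(n=k-1\), contradicting the hypothesis. The edge case \(m=0\) also needs care: then \(ky\le 0\le (k-1)y\), so the same conclusion holds. This step is the heart of the argument, but here it is short.

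\textbf{Step 3: extending to an additive functional.} Now produce an additive, order-preserving \(\nu_0\colon S_y\to[0,\infty)\) with \(\nu_0\le p\) and \(\nu_0(y)=p(y)=1\). To do this, pass to the Grothendieck-type real vector space spanned by \(S_y\) (formal differences) and apply the Hahn--Banach theorem there. The main obstacle is to make sure that \(p\) descends to this space as a sublinear functional, and that the resulting linear functional is positive on \(S_y\). Positivity follows from monotonicity of \(p\): for \(x\in S_y\), \(-\nu_0(x)\le p(-x)\le 0\), where \(p(-x)\) is interpreted through the extension, and the extension is chosen so that this holds. Order preservation for the algebraic order is then automatic from additivity and positivity.

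If the vector-space route becomes technically messy, the fallback is a direct Zorn's lemma argument on partial additive functionals dominated by \(p\). This is the classical monoid Hahn--Banach theorem for preordered monoids.

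\textbf{Step 4: extending to all of \(S\).} Finally, set \(\nu(x)=\nu_0(x)\) for \(x\in S_y\) and \(\nu(x)=\infty\) otherwise. The complement of \(S_y\) is upward closed, and a sum \(x+z\) lies in \(S_y\) only if both \(x\) and \(z\) lie in \(S_y\), since \(x\le x+z\). Hence \(\nu\) is additive and order-preserving. It also satisfies \(\nu(0)=0\) and \(\nu(y)=1\), so it is the required state.
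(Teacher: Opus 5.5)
The paper gives no proof of this statement; it is quoted from Tarski's 1938 paper, so your argument has to stand on its own. The easy direction and Steps 1, 2 and 4 are fine, apart from a small slip in Step 2: for \(k=1\) you get \(n=k-1=0\), which the definition of paradoxical does not allow; in that case \(y\le 0\) forces \(y=0\) by conicality.

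The genuine gap is Step 3. It carries the whole content of the theorem, and you only describe it. Your \(p\) is defined on \(S_y\) only, so it cannot simply ``descend'' to a sublinear functional on the space of formal differences. A value must be assigned to every \([a]-[b]\). The sentence ``\(p(-x)\) is interpreted through the extension, and the extension is chosen so that this holds'' is not an argument, since no extension is specified.

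More seriously, Hahn--Banach needs one hypothesis here: that \(t[y]\mapsto t\) is dominated by the extended functional, i.e.\ that its value at \([y]\) is still \(1\). This does not follow from Step 2. In the Grothendieck group, \(n[y]\le m[y]\) only means \(ny+c+z=my+z\) for some ``catalyst'' \(z\in S_y\), whereas Step 2 only rules out \(ny\le my\) in \(S\) itself.

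Appealing to an unspecified ``monoid Hahn--Banach theorem'' does not close this. Dominated extension from a submonoid fails for sublinear functionals on monoids in general. For example, take the monoid with generators \(a,b\) and relation \(a+b=2b\), and set \(p(na)=n\), \(p(kb)=0\). Then \(p\) is sublinear and dominates \(na\mapsto n\), yet any additive extension \(\psi\) has \(\psi(b)=\psi(a)=1>p(b)\). So any correct version carries a compatibility hypothesis, and verifying it is the same catalyst issue.

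One way to fill the gap is as follows. Let \(G\) be the Grothendieck group of \(S_y\), preordered by the image of \(S_y\), with order unit \(u=[y]\), and set \(q(g)=\inf\{m/n\colon n\ge 1,\ m\in\mathbb Z,\ ng\le mu\}\). This \(q\) is subadditive and \(\N\)-homogeneous, and \(q(-[x])\le 0\) for \(x\in S_y\).

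Non-paradoxicality enters exactly once. Suppose \(nu\le mu\) with \(0\le m<n\); negative \(m\) reduces to \(m=0\). Write \(z+w=ky\) for the catalyst and add \(w\) to get \((n+k)y\le(m+k)y\) in \(S\). This is paradoxical if \(m+k\ge 1\), and forces \(y=0\) by conicality if \(m+k=0\). Hence \(q(0)=0\) and \(q(u)=1\), and \(q\) is real-valued because \(q(g)\ge -q(-g)\).

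Hahn--Banach on \(G\otimes\mathbb Q\) then extends \(ju\mapsto j\) to a homomorphism \(\nu\le q\). It is nonnegative on the image of \(S_y\), since \(\nu(-[x])\le q(-[x])\le 0\). Thus \(\nu_0(x)\defeq\nu([x])\) is what Step 3 asks for, and your Step 4 finishes the proof.

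Incidentally, positivity was never the hard part. Any additive \(\nu_0\le p\) on \(S_y\) with \(\nu_0(y)=1\) is automatically nonnegative, because \(x+c=my\) gives \(\nu_0(x)=m-\nu_0(c)\ge m-p(c)\ge 0\). The real issue is existence, and it is settled by the catalyst argument, not by Step 2.
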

	
	A nonzero element~\(x\) in a cone~\(M\) is an \emph{atom} if whenever~\(x=a+b\) for elements~\(a,b\in M\), then either~\(a=0\) or~\(b=0\). This means that~\(x\) is minimal among all the nonzero elements with respect to the algebraic preorder. An \emph{order unit} of a cone~\(M\) is an element~\(u\) such that for all~$x\in M$ there is an~$n\in \bb{N}$ with~$x\leq nu$.
	
	\begin{defn}\label{defn:simple}
		A cone~\(M\) is \emph{simple} if every nonzero element is an order unit.
	\end{defn}
	
	The set of order units of a refinement cone is downward directed, that is, given any order units~\(u,v\in M\), there exists an order unit~\(w\in M\) with~\(w\leq u\) and~\(w\leq v\) (see~\cite[Lemma 1.5.4]{Wehrung:Monoids_Boolean}). In particular, if $M$ is a simple cone, then $M\setminus\{0\}$ is downward directed.
	\begin{defn}
		Let~$M$ be a cone. An \emph{$M$-valued measure} on a Boolean ring~$B$ is a map~$\mu\colon B\to M$ satisfying:
		\begin{enumerate}
			\item~$\mu(x\oplus y)=\mu(x)+\mu(y)$;
			\item~$\mu^{-1}\{0_M\}=\{0_B\}$.
		\end{enumerate}
		It is called a \emph{V-measure} (or \emph{Vaught measure}) if additionally it satisfies the following property:
		whenever~$\bar{a},\bar{b}\in M$ and~$c\in B$ satisfy~$\mu(c)=\bar{a}+\bar{b}$, there are elements~$a,b\in B$ such that~$c=a\oplus b$,~$\mu(a)=\bar{a}$, and~$\mu(b)=\bar{b}$.
	\end{defn}
	
	The operation~\(\oplus\) turns~\((B,\oplus,0)\) into a partial cone. So an \(M\)-valued measure can be thought of as a conical homomorphism from a partial cone to a cone. 
	
	\begin{ex}\label{ex:Bernoulli_half_measure}
		The Cantor space can be presented as the infinite product space~$X\cong \{0,1\}^\N$. 
		The \emph{Bernoulli~$(\frac{1}{2},\frac{1}{2})$ measure} is defined as the product measure obtained by assigning probability~$\frac{1}{2}$ to both~$0$ and~$1$. This induces a V-measure on the Cantor algebra~$\Clop(X)$ with values in the cone~$(\mathbb Q_2)_+$.
	\end{ex}
	
	For an element~\(a\) in a Boolean ring~\(B\), set~\(B\downarrow a\defeq\{x\in B\colon x\leq a\}\). A partial automorphism of~\(B\) is an isomorphism of the two Boolean algebras~$B\downarrow a$ and~$B\downarrow b$ for~$a,b\in B$. Two partial automorphisms~\(f\colon B\downarrow a\to B\downarrow b\) and~\(g\colon B\downarrow c\to B\downarrow d\) can be composed as 
	\[gf\colon B\downarrow f\inv(bc)\to B\downarrow g(bc),\quad x\mapsto g(f(x)).\]
	
	\begin{defn}
		The set of all partial automorphisms of a Boolean ring~\(B\), denoted by~\(\Inv(B)\), together with composition is called the \emph{Munn semigroup} of~\(B\).
	\end{defn}
	
	Given a zero-dimensional locally compact Hausdorff space~\(X\), set~\(\Inv(X)\) to be the inverse semigroup of all homeomorphisms~\(h\colon U\to V\), for clopen sets~\(U,V\subseteq X\) on~\(X\). Given a group~\(G\) acting on~\(X\), set~\(\Inv(X,G)\) to be the set of all partial homeomorphisms~\(h\colon U\to V\) that are piecewise in~\(G\).
	Formally, there exist a finite partition~\(U=\bigsqcup_{i=1}^n U_i\) and elements~\(g_1,\dots,g_n\in G\) such that~\(h(x)=g_ix\) for every~\(x\in U_i\).
	The set~\(\Inv(X,G)\) inherits an inverse semigroup structure as a subset of~\(\Inv(X)\). 
	
	When a group~\(G\) acts on a Boolean ring~\(B\) by automorphisms, we denote by~\(\Inv(B,G)\) the set of partial automorphisms that can be decomposed into group elements. More precisely, a partial automorphism~\(f\colon B\downarrow a\to B\downarrow b\) is in~\(\Inv(B,G)\) if there exist elements~\(a_1,\dots,a_n,b_1,\dots,b_n\in B\) and~\(g_1,\dots,g_n\in G\) such that~\(a=\oplus_i a_i\), \(b=\oplus_i b_i\), \(g_ia_i=b_i\) for every~\(i\) and
	\[f(x)=\bigoplus_{i=1}^n g_i(xa_i)\]
	for every~\(x\in B\downarrow a\). This forms a
	Boolean inverse subsemigroup of~\(\Inv(B)\).
	The action of~\(G\) on~\(B\) naturally induces an action on~\(\St(B)\).
	
	A neat way to think of the Munn semigroup is as the set of partial homeomorphisms on the Stone space of the Boolean ring. The next proposition proves that these semigroups are isomorphic.
	This is given in \cite[Proposition 4.4.10]{Wehrung:Monoids_Boolean} without proof.
	Similarly, the inverse semigroup~\(\Inv(B,G)\) is isomorphic to~\(\Inv(\St(B),G)\).
	
	\begin{prop}\label{prop:iso_of_G-semigroups}
		Let~\(G\) be a discrete group acting on a Boolean ring~\(B\). Then
		\[
		\Inv(B)\cong\Inv(\St(B)) \andSep \Inv(B,G)\cong\Inv(\St(B),G)
		\]
		as Boolean inverse semigroups.
	\end{prop}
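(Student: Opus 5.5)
We use Stone duality to pass between clopen sets and elements of \(B\). Write \(X=\St(B)\), and let \(\Omega\colon B\to\Clop(X)\), \(a\mapsto\Omega_a\), be the Boolean algebra isomorphism given by Stone duality. It restricts to isomorphisms \(B\downarrow a\cong \Clop(X)\downarrow\Omega_a\). We define \(\Phi\colon\Inv(B)\to\Inv(X)\) as follows. Let \(f\colon B\downarrow a\to B\downarrow b\) be a partial automorphism. Then \(\Omega f\Omega\inv\) is a Boolean algebra isomorphism \(\Clop(\Omega_a)\to\Clop(\Omega_b)\). The Stone spaces of \(B\downarrow a\) and \(B\downarrow b\) are canonically homeomorphic to the clopens \(\Omega_a\) and \(\Omega_b\): an ultrafilter \(q\) of \(B\downarrow a\) corresponds to the ultrafilter \(\{x\in B\colon xa\in q\}\) of \(B\). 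By contravariant functoriality, \(f\inv\) induces a homeomorphism \(\Phi(f)\colon\Omega_a\to\Omega_b\). Concretely,
\[
\Phi(f)(p)=\{y\in B\colon f\inv(yb)\in p\}\quad\text{for } p\in\Omega_a .
\]
With this normalisation, \(\Phi(f)(\Omega_x)=\Omega_{f(x)}\) for every \(x\le a\).

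Step one is to check that \(\Phi\) is a bijection. Injectivity holds because \(\Phi(f)\) determines \(f\) through \(\Omega_{f(x)}=\Phi(f)(\Omega_x)\). For surjectivity, take a homeomorphism \(h\colon U\to V\) between clopens. Then \(U=\Omega_a\) and \(V=\Omega_b\) for some \(a,b\in B\), and \(h\) induces a Boolean isomorphism \(\Clop(U)\to\Clop(V)\) by taking images. Pulling this back through \(\Omega\) gives some \(f\), and full faithfulness in Stone duality gives \(\Phi(f)=h\).

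Step two is to check that \(\Phi\) is a semigroup homomorphism. Let \(f\colon B\downarrow a\to B\downarrow b\) and \(g\colon B\downarrow c\to B\downarrow d\). The domain of \(gf\) is \(f\inv(bc)\). Its image under \(\Omega\) is \(\Phi(f)\inv(\Omega_b\cap\Omega_c)\), which is exactly the domain of \(\Phi(g)\circ\Phi(f)\). On clopens below this domain, both \(\Phi(gf)\) and \(\Phi(g)\Phi(f)\) send \(\Omega_x\) to \(\Omega_{g(f(x))}\). The clopens separate points of a Stone space, so the two partial homeomorphisms agree. A bijective semigroup homomorphism between inverse semigroups automatically preserves inverses, the natural partial order, compatibility and existing joins. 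Hence \(\Phi\) is an isomorphism of Boolean inverse semigroups.

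Step three treats \(G\). The action of \(G\) on \(X\) is by definition \(g\mapsto\Phi(g)\), with each \(g\) viewed as a total partial automorphism \(B\downarrow 1\to B\downarrow 1\); in the non-unital case we use its restrictions \(B\downarrow a\to B\downarrow ga\). Take \(f\in\Inv(B,G)\) with decomposition \(a=\bigoplus_i a_i\) and \(f(x)=\bigoplus_i g_i(xa_i)\). Then \(f\) is the join of the orthogonal restrictions \(g_i|_{B\downarrow a_i}\). Since \(\Phi\) preserves joins, \(\Phi(f)=\bigsqcup_i \Phi(g_i)|_{\Omega_{a_i}}\). This means \(\Phi(f)\) acts as \(g_i\) on \(\Omega_{a_i}\), a clopen partition of \(\Omega_a\), so \(\Phi(f)\in\Inv(X,G)\).

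Conversely, let \(h\in\Inv(X,G)\) with a partition \(U=\bigsqcup U_i\) on which \(h\) acts as \(g_i\). Each \(U_i\) is clopen, so \(U_i=\Omega_{a_i}\), and \(\Phi\inv(h)\) decomposes as \(x\mapsto\bigoplus g_i(xa_i)\). Therefore \(\Phi\) restricts to an isomorphism \(\Inv(B,G)\cong\Inv(X,G)\).

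The main obstacle is bookkeeping rather than ideas. The variance of \(\Phi\) must be set so that it is covariant and not anti-homomorphic; using \(f\inv\) as above handles this. One must also make sure that the domain of a composite matches the domain of the corresponding composite of partial homeomorphisms. Finally, in the non-unital case, \(\St(B)\) is only locally compact, so we use the non-unital Stone duality and note that each \(\Omega_a\) is compact.
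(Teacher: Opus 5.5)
Your proposal is correct and follows essentially the same route as the paper: the same map \(\Phi(f)(p)=\{y\in B\colon f\inv(yb)\in p\}\), the key identity \(\Phi(f)(\Omega_x)=\Omega_{f(x)}\) giving injectivity, an inverse built by taking images of basic clopens, and a piece-by-piece treatment of \(\Inv(B,G)\). The differences are cosmetic. You check the homomorphism property and surjectivity by comparing images of clopens (faithfulness of Stone duality), and you get the \(G\)-part from preservation of the natural order and joins, whereas the paper chases ultrafilter membership directly.
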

	\begin{proof}
		Let~\(f\colon B\downarrow a\to B\downarrow b\) be an element of~\(\Inv(B)\). 
		Given~\(a\in B\) and~\(p\in\Omega_a\), set~\(\rho(f)(p)\defeq\{x\in B\colon f\inv(x\land b)\in p\}\). This is an ultrafilter in~\(\Omega_b\) and~\(\rho(f)\colon \Omega_a\to\Omega_b\) is a partial homeomorphism of~\(\St(B)\) since~\(\rho(f)(\Omega_a)=\Omega_{f(a)}=\Omega_b\). We want to show that the map
		\[
		\rho\colon \Inv(B)\to\Inv(\St(B))\quad\text{with}\quad f\mapsto\rho(f)
		\]
		is an isomorphism of inverse semigroups.
		
		The map~\(\rho\) is a homomorphism. Indeed, consider~\(f_1\colon B\downarrow a_1\to B\downarrow b_1\) and~\(f_2\colon B\downarrow a_2\to B\downarrow b_2\).
		The partial homeomorphisms~\(\rho(f_1\circ f_2)\) and~\(\rho(f_1)\circ\rho(f_2)\) are both from~\(\Omega_{f_2\inv(b_2\land a_1)}\) to~\(\Omega_{f_1(b_2\land a_1)}\).
		Consider~\(p\) in the domain of~\(\rho(f_1\circ f_2)\) and~\(x\in B\). We have
		\begin{align*}
			x\in\rho(f_1\circ f_2)(p) &\iff (f_1\circ f_2)\inv(x\land(f_1(b_2\land a_1)))\in p \\
			&\iff f_2\inv(f_1\inv((x\land b_1)\land f_1(b_2\land a_1))) \in p\\
			&\iff f_2\inv(f_1\inv(x\land b_1)\land b_2\land a_1)\in p\\
			&\iff f_2\inv(f_1\inv(x\land b_1)\land b_2) \in p\\
			&\iff f_1\inv(x\land b_1)\in\rho(f_2)(p)\\
			&\iff x\in (\rho(f_1)\circ\rho(f_2))(p).
		\end{align*}

		We will now prove that~\(\rho\) is bijective.
		For any partial homeomorphism~\(g\colon\Omega_a\to\Omega_b\) and~\(x\in B\downarrow a\),
		the image~\(g(\Omega_x)\) is a clopen subset of~\(\Omega_b\). 
		Then there exists~\(y\in B\downarrow b\) with~\(g(\Omega_x)=\Omega_y\) by Stone duality.
		Set~\(\eta(g)(x)=y\).
		The maps
		\[\eta\colon \Inv(\St(B))\to\Inv(B)\quad\text{with}\quad g\mapsto\eta(g)\]
		and~\(\rho\) are mutual inverses of each other.
		Let~\(f\colon B\downarrow a\to B\downarrow b\), and let $x\leq a$.
		Then~\(\eta(\rho(f))(x)\) is the unique element~\(y\in B\) with~\(\rho(f)(\Omega_x)=\Omega_y\). Since it is not hard to show from the definition of $\rho$ that $\rho(f)(\Omega_x)=\Omega_{f(x)}$, we get that ~\(\eta(\rho(f))(x)=f(x)\), hence~\(\eta\circ\rho=\id\).
		Let~\(g\colon \Omega_a\to\Omega_b\) be a partial homeomorphism and~\(p\in\Omega_a\).
		We will show that~\(\rho(\eta(g))(p)=g(p)\). Given an element $y\in B$, it can be decomposed into $y_1\oplus y_2$ with $y_1\leq b$ and $y_2\land b=0$. Since both $\rho(\eta(g))(p)$ and $g(p)$ are ultrafilters containing $b$, $y$ is an element of either of them if and only if $y_1$ is. Therefore we can restrict the attention to elements $y\leq b$.
		For any~\(y\leq b\), we have~\(g\inv(\Omega_y)=\Omega_{\eta(g)\inv(y)}\).
		Then
		\begin{align*}
			y\in g(p)&\iff p\in g\inv(\Omega_y) \\
			&\iff p\in\Omega_{\eta(g)\inv(y)} \\
			&\iff \eta(g)\inv(y)\in p \\
			&\iff \eta(g)\inv(y\land b)\in p \\
			&\iff y\in\rho(\eta(g))(p)
		\end{align*}
		by the definition of~\(\rho\).	
		
		We will now show that the maps~\(\rho\colon \Inv(B)\to\Inv(\St(B))\) and~\(\eta\colon \Inv(\St(B))\to\Inv(B)\)
		described above
		exhibit the second isomorphism when restricted to~\(\Inv(B,G)\) and~\(\Inv(\St(B),G)\). To do that, it suffices to show that $\rho(\Inv(B,G))\subseteq \Inv(\St(B), G)$ and $\eta(\Inv(\St(B), G))\subseteq \Inv(B,G)$.
		Consider~\(f\colon B\downarrow a\to B\downarrow b\) in~\(\Inv(B,G)\) defined by the 
		partition~\(a=\bigoplus_{i=1}^n a_i\) and group elements~\(\{g_i\}_{i=1}^n\). 
		For any~\(x\in B\), one computes directly that 
		\(f\inv(x\land b)=\bigoplus_{j=1}^n(g_j\inv x\land a_j)\). 
		Let~\(p\in\Omega_{a_i}\), so that~\(a_i\in p\). 
		Since the~\(a_j\)'s are pairwise disjoint, \(a_j\notin p\) for~\(j\neq i\), 
		and since~\(g_j\inv x\land a_j\leq a_j\), the term~\(g_j\inv x\land a_j\) 
		does not belong to~\(p\) for~\(j\neq i\) either. 
		Therefore \(\bigoplus_j(g_j\inv x\land a_j)\in p\) if and only 
		if~\(g_i\inv x\land a_i\in p\), and we obtain:
		\begin{align*}
			x\in\rho(f)(p) &\iff f\inv(x\land b)\in p \\
			&\iff g_i\inv x\land a_i\in p \\
			&\iff x\in g_i(p),
		\end{align*}
		hence~\(\rho(f)(p)=g_i(p)\) for all~\(p\in\Omega_{a_i}\), as desired.
		
		On the other hand, let~\(h\colon \Omega_a\to\Omega_b\) in \(\Inv(\St(B))\) be given by a partition~\(\Omega_a=\bigcup_{i=1}^n\Omega_{a_i}\)
		and group elements~\(\{g_i\}_{i=1}^n\). 
		By definition, we get that~\(\eta(h)\) is the unique partial automorphism in~\(\Inv(B)\)
		such that~\(h(\Omega_x)=\Omega_{\eta(h)x}\)
		for all~\(x\leq a\).
		For~\(x\leq a_i\), since~\(\Omega_x\subseteq\Omega_{a_i}\) and~\(h|_{\Omega_{a_i}}=g_i\),
		we have~\(h(\Omega_x)=g_i(\Omega_x)=\Omega_{g_ix}\).
		This shows that~\(\eta(h)(x)=g_i(x)\) for all $x\leq a_i$, hence $\eta(h)\in \Inv(B,G)$, as desired.
	\end{proof}
	
	If~$\mu$ is a measure on~$B$, we denote by~$\Inv(B,\mu)$ the set of all~$f\in\Inv(B)$ that preserve~\(\mu\), and by~$\Aut(B,\mu)$ the set of automorphisms that preserve~\(\mu\). 
	It can be easily verified that~$\Inv(B,\mu)$ is an inverse subsemigroup of~$\Inv(B)$ and that~\(\Aut(B,\mu)\) is a subgroup of~\(\Aut(B)\) (see \cite[Proposition 4.7.3]{Wehrung:Monoids_Boolean}).
	
	\begin{defn}\label{defn:group_measurable}
		A V-measure~\(\mu\) is said to be \emph{groupoid-induced} if for all~\(a,b\in B\),~\(\mu(a)=\mu(b)\) if and only if there exists~\(f\in\Inv(B,\mu)\) with~\(f(a)=b\). We say that~\(\mu\) is \emph{group-induced} if for all~\(a,b\in B\), one has ~\(\mu(a)=\mu(b)\) if and only if~\(a\) and~\(b\) can be decomposed into elements~\(a=\oplus_{i=1}^n a_i\) and~\(b=\oplus_{i=1}^n b_i\) and there exist~\(f_1\dots,f_n\in\Aut(B,\mu)\) with ~\(f_i(a_i)=b_i\).
	\end{defn}
	
	As the names suggest, every group-induced V-measure is automatically groupoid-induced.
	
	
	\subsection{Dynamical comparison}\label{sec:dynamical_comparison}
	We will now define Kerr's notion of comparison for a dynamical system. As mentioned in the introduction, this is a dynamical analogue of strict comparison of positive elements in a C*-algebra. 
	
	\begin{nota}
		Throughout this article,~\(X\) denotes a compact Hausdorff space and~\(G\) denotes a discrete group that acts on~\(X\) by homeomorphisms.
	\end{nota}
	
	We say that a Borel measure~\(\mu\) on~\(X\) is \emph{regular} if the measure of every Borel set~\(A\subseteq X\) is the supremum of the measures of the compact subsets of~\(A\) and the infimum of the measures of open sets that contain~\(A\). Every Borel probability measure on a metrisable space is regular \cite[Theorem 17.10]{Kec95DescriptiveSetThy}.
	A Borel measure~\(\mu\) on~\(X\) is said to be \emph{\(G\)-invariant} if~\(\mu(A)=\mu(gA)\) for every~\(g\in G\) and every Borel set~\(A\subseteq X\). Denote by~\(M_G(X)\) the set of all \(G\)-invariant regular Borel probability measures on~\(X\).
	
	\begin{defn}
		Let~\(U,V\subseteq X\) be open subsets. Write~\(U\preccurlyeq V\) if for every compact subset~\(K\subseteq U\), there exist~\(n\geq 1\), open sets~\(W_1,\dots,W_n\subseteq X\) and elements~\(g_1,\dots,g_n\in G\) such that
		\[
		K \subseteq W_1 \cup \cdots \cup W_n, \andSep
		g_1W_1 \sqcup \cdots \sqcup g_nW_n \subseteq V.
		\]
	\end{defn}
	
	Kerr introduced this as a dynamical analogue of Cuntz subequivalence of positive elements in a C*-algebra. In this spirit, when two sets~\(U\) and~\(V\) satisfy the above relation, we will say that~\(U\) is \emph{dynamically subequivalent} to~\(V\) or \emph{dynamically below}~\(V\). If this holds, then~\(\mu(U)\leq\mu(V)\) for every~\(\mu\in M_G(X)\). This leads to the following definition.
	
	\begin{defn}\label{def:dyncomp}
		A minimal system~\(G\curvearrowright X\) is said to have \emph{(dynamical) comparison} if whenever~\(U\) and~\(V\) are nonempty open sets in~\(X\) such that~\(\mu(U)<\mu(V)\) for every~\(\mu\in M_G(X)\), then~\(U\preccurlyeq V\).
	\end{defn}
	
	\begin{rmk}
		The definition of dynamical comparison simplifies when~\(X\) is \emph{zero-dimensional}. In this case, the action~$G\acts X$ has dynamical comparison if whenever~$A$ and~$B$ are compact open satisfying~$\mu(A)<\mu(B)$ for all~$\mu\in M_G(X)$, then there are a compact open partition~$A=A_1\sqcup\dots\sqcup A_n$ and elements~$g_1,\dots g_n\in G$ such that the sets~$g_iA_i$ are pairwise disjoint and contained in~$B$ (see \cite[Proposition 3.6]{Kerr:Dynamical_Toms-Winter}).
	\end{rmk}
	
	We recall here some classes of actions that are known to satisfy dynamical comparison:
	
	\begin{itemize}
		\item Free minimal actions that are almost finite \cite{Kerr:Dynamical_Toms-Winter}.
		\item Actions of groups of subexponential growth on the Cantor space \cite{DZ_Comparison-property}.
		\item Minimal actions of finitely generated groups of polynomial growth on a compact metrizable space \cite{Naryshkin_Pol-growth-comparison-SBP}.
		\item Free actions of elementary amenable groups on finite-dimensional compact metrizable spaces \cite{KN_el-amenable-AF}.
		\item Free actions on finite-dimensional compact spaces of groups in the smallest class that contains all infinite groups of locally subexponential growth and is closed under taking countable direct limits and extensions on the right by an amenable group \cite{Nasryshkin_Group-ext-AF}.
		\item Free actions on finite-dimensional compact spaces of amenable topological full groups of Cantor minimal systems \cite{NP_AF-grp-dynamical-origin}.
		\item Amenable minimal actions of groups with paradoxical towers ---a strong form of non-amenability satisfied by free groups, acylindrically hyperbolic groups and many natural classes of non-amenable groups--- on compact metrizable spaces \cite{GGKN_cp-nonamenable}.
		\item The canonical action of an arbitrary group~$G$ on its Stone-Cech compactification~$\beta G$, or on its universal minimal flow~$\mu G$ \cite{Melleray:Clopen_type_semigroups}.
		\item Profinite actions of residually finite groups (see \cite[Proposition 3.11]{gar:beyondfreeness} for a proof).
	\end{itemize}

	
	\section{Refinement cones and almost unperforation}\label{sec.constructionmonoids}
	In this section, we shall recall some useful properties of monoids and show the existence of perforated cones: the building blocks for the construction of our counterexamples. In this article, by `countable' we mean at most countable.
	
	\begin{defn}\label{def:almost_unperforated}
		A cone~\(M\) is \emph{cancellative} if~\(x+z=y+z\) implies that~\(x=y\) for all~\(x,y,z\in M\). It is \emph{$n$-divisible} if for all~$x\in M$ there is an element~$y\in M$ such that~$x=ny$. It is \emph{divisible} if it is~$n$-divisible for all~$n$.
		The cone is \emph{almost
			unperforated} if for any~\(x,y\in M\), we have~\(x\le y\) whenever there exists~\(n\ge 1\) 
		with~\((n+1)x\le n y\)
	\end{defn}
	
	A map $f\colon M\to N$ between cones $M$ and $N$ is an \emph{order embedding} if it is an injective conical homomorphsim and it satisfies $f(x)\leq f(y)$ if and only if $x\leq y$ for all $x,y\in M$. In this case we say that $N$ is an \emph{extension} of $M$.
	
	\begin{rmk}
		If $M$ is a cone which is not almost unperforated and $N$ is an extension of $M$, then $N$ is also not almost unperforated. Indeed, let $f\colon M\to N$ be an order embedding and let $x$ and $y$ be elements of $M$ satisfying $(n+1)x\leq ny$ for some $n\in \N$ and $x\not\leq y$. Then $(n+1)f(x)\leq nf(y)$ and $f(x)\not\leq f(y)$.
	\end{rmk}
	
	\begin{ex}\label{ex:perforated_cone}
		Not every cone is almost unperforated.
		Consider the simple countable cancellative cone~$M_0=\{0,  {2},{3},\dots\}\subseteq \N$. The cone~$M_0$ is not almost unperforated,
		indeed~\(3\cdot 2\leq 2\cdot 3\) but~\(2\not\leq 3\) since~$M_0$ has no nonzero elements smaller than~\(2\).
	\end{ex}
	
	\begin{defn}\label{defn:refinement}
		A cone~\(M\) is said to have the \emph{refinement} property if for any \(a_1,a_2,b_1,b_2\in M\) with~\(a_1+a_2=b_1+b_2\), there exist~\(x_{11},x_{12},x_{21},x_{22}\in M\) with~\(a_i=x_{i1}+x_{i2}\) and~\(b_i=x_{1i}+x_{2i}\) for~\(i=1,2\).
		It has the \emph{strong refinement} property if whenever~$a_1+a_2=b_1+b_2$ with~$a_i,b_j\neq 0$, then there is a refinement~$(x_{i,j})_{i,j}$ with~$x_{i,j}\neq 0$.
	\end{defn}
	
	We call the~$2\times 2$-matrix~$(x_{i,j})_{i,j}$ a (strong) refinement of the equation~$a_1+a_2=b_1+b_2$.
	
	\begin{rmk}
		If~$M$ has the strong refinement property, then it has the refinement property. Indeed, suppose~$a_1+a_2=b_1+b_2$. If~$a_i,b_j\neq 0$, then there is a nonzero refinement. Assume that~$a_1=0$. Then~$x_{1,1}=0=x_{1,2}$,~$x_{2,1}=b_1$,~$x_{2,2}=b_2$ is a refinement, and similar refinements can be obtained when~$a_2=0$ or~$b_j=0$.
	\end{rmk}
	
	\begin{ex}
		Let $\mathbb{Q}_2$ be the set of dyadic rationals, that is, the set of rational numbers of the form $\frac{a}{2^k}$ with $a\in \mathbb Z$ and $k\in \N$. The cone~$(\mathbb{Q}_2)_+$ of nonnegative rational dyadic rationals has the strong refinement property. Indeed, given the equation~$a_1+a_2=b_1+b_2$ with~$a_1$ minimal among them, one possible refinement is obtained by~$x_{1,1}=\frac{a_1}{2}=x_{1,2}$,~$x_{2,1}=b_1-\frac{a_1}{2}$,~$x_{2,2}=b_2-\frac{a_1}{2}$. In general, every simple atomless  refinement cone has the strong refinement property, see \cite[Theorem 3.4 and Remark 3.5]{OPR_The-corona-factorization-property-and-refinement-monoids}.
	\end{ex}
	
	\begin{ex}
		Not every refinement cone has the strong refinement property. Consider the equation~$1+1=1+1$ in~$\N$, or the equation~$(1,0)+(0,1)=(1,0)+(0,1)$ in~$\mathbb R_+^2$.
	\end{ex}
	
	The goal of this section is to construct two countable simple refinement cones,~$P$ and~$Q$, that are not almost unperforated and satisfy the following properties:
	\begin{enumerate}
		\item~$P$ is cancellative;
		\item  there is a V-homomorphism from~$P$ to the cone of nonnegative dyadic rationals~$(\mathbb{Q}_2)_+$;
		\item~$Q$ does not admit any nontrivial state.
	\end{enumerate}
	
	If~$M$ and~$N$ are cones, their product in the category of cones with conical homomorphisms is given by $\left((M\setminus\{0\})\times (N\setminus\{0\})\right)\cup \{(0,0)\}$. We call this cone the \emph{conical product} of~$M$ and~$N$ and denote it by~$M\times_cN$ to distinguish it from the product in the category of monoids.
	
	\begin{prop}\label{prop.reducedprod}
		Let~$M$ and~$N$ be cones with the strong refinement property. Then~$M\times_cN$ has the strong refinement property. If both~$M$ and~$N$ are simple, then so is~$M\times_cN$. If both~$M$ and~$N$ are~$n$-divisible, then so is~$M\times_cN$.
	\end{prop}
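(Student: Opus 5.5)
The plan is to treat the three claims separately and work directly in the conical product $M\times_cN$. The nonzero elements are exactly the pairs $(m,n)$ with $m\neq 0$ and $n\neq 0$, and addition is componentwise. This set is closed under addition: if $m,m'\neq0$, then $m+m'\neq 0$ because $M$ is conical, and likewise in $N$. So $M\times_cN$ is a submonoid of $M\times N$, and it is conical. Throughout, the preorder used is the algebraic one computed inside $M\times_cN$, and I will always produce witnesses that lie in $M\times_cN$.

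For strong refinement, I start from $a_1+a_2=b_1+b_2$ in $M\times_cN$ with all four terms nonzero. Write $a_i=(a_i^M,a_i^N)$ and $b_j=(b_j^M,b_j^N)$; then every component is nonzero. Strong refinement in $M$ gives nonzero $x_{ij}^M$ refining $a_1^M+a_2^M=b_1^M+b_2^M$, and strong refinement in $N$ gives nonzero $x_{ij}^N$ for the $N$-components. The pairs $x_{ij}=(x_{ij}^M,x_{ij}^N)$ then have both coordinates nonzero, so they lie in $M\times_cN\setminus\{0\}$, and they refine the original equation componentwise. This step is the crucial point where strong refinement, and not mere refinement, is needed: a zero entry in only one coordinate would produce a pair outside the conical product.

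For simplicity, take nonzero $u=(u^M,u^N)$ and an arbitrary $x=(x^M,x^N)$. The case $x=0$ is trivial. Otherwise, simplicity of $M$ and $N$ gives $x^M+c^M=k u^M$ and $x^N+c^N=\ell u^N$ for some $k,\ell\ge1$ and some $c^M\in M$, $c^N\in N$. I pass to $p=\max(k,\ell)+1$ and write $x^M+(c^M+(p-k)u^M)=pu^M$, and similarly in $N$. Because $p-k\ge1$ and $u^M\neq0$, the complement $c^M+(p-k)u^M$ is nonzero, and the same holds in $N$. Hence the complement pair lies in $M\times_cN$, and $x\le pu$ there. This adjustment, ensuring that the witness is nonzero in both coordinates, is the only mildly delicate point.

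For $n$-divisibility, given a nonzero $x$, I choose $y^M$ with $ny^M=x^M$ and $y^N$ with $ny^N=x^N$. These are nonzero, since otherwise $x^M$ or $x^N$ would be $0$. So $y=(y^M,y^N)\in M\times_cN$ satisfies $ny=x$, and $0=n\cdot0$ covers the zero element.

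The main obstacle is not deep. It consists in systematically checking that every witness lies in $M\times_cN$, and this is handled by the strong refinement hypothesis and by the padding trick in the simplicity argument.
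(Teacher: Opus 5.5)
Your proof is correct and follows the paper's argument for strong refinement: refine componentwise, and since all entries are nonzero, the witnesses lie in $M\times_cN$. Your padding step with $p=\max(k,\ell)+1$, which keeps the complement nonzero in both coordinates, and your divisibility check correctly supply the details the paper dismisses as ``clear.''
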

	\begin{proof}
		Suppose~$(a_1,a_1')+(a_2,a_2')=(b_1,b_1')+(b_2,b_2')$ with all the four terms different from~$(0,0)$.
		Then~\(a_i,a_i',b_i,b_i'\), for~\(i=1,2\), are all non-zero by the definition of~$M\times_cN$.
		By the strong refinement property, we can find nonzero refinements~$x_{i,j}$ and~$x_{i,j}'$. Then~$(x_{i,j},x_{i,j}')$ is a refinement of the original equation. The statements on simplicity and divisibility are clear.
	\end{proof}
	
	\begin{rmk}
		The proposition is false if strong refinement is weakened to refinement: consider~$M=N=\N$ as a counterexample.
	\end{rmk}
	
	\begin{thm}\label{thm.constructionP}
		Let~$N$ be a countable~$2$-divisible simple refinement subcone of~$\mathbb R_+$. There exists a countable cancellative~$2$-divisible simple refinement cone~$P$ which is not almost unperforated and which admits a V-homomorphism~$\tau\colon P\to N$.
	\end{thm}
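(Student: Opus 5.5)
The plan is to realise $P$ as a subcone of a product $N\times_c C$, where $C$ is an auxiliary countable, cancellative, $2$-divisible, simple cone with the strong refinement property that already fails almost unperforation. The map $\tau\colon P\to N$ will be the projection onto the first coordinate. This reduces the theorem to two tasks: constructing $C$, and checking that the product and the projection inherit the required properties.

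\emph{Reduction.} Since $N$ is a countable $2$-divisible simple refinement subcone of $\mathbb R_+$, it is atomless, so by the cited result of~\cite{OPR_The-corona-factorization-property-and-refinement-monoids} it has the strong refinement property. Given $C$ as above, Proposition~\ref{prop.reducedprod} shows that $P\defeq N\times_c C$ is simple, $2$-divisible and has strong (hence ordinary) refinement. Countability and cancellativity of $P$ are inherited coordinatewise from $N$ and $C$, using that the nonzero part of $P$ is closed under the coordinatewise operations.

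The projection $\tau(x,c)=x$ is conical because the nonzero elements of $P$ have both coordinates nonzero. For the Vaught property, suppose $x=x_1+x_2$ with, say, both $x_i\neq 0$. Write $c=c'+c''$ with $c',c''\neq 0$, which is possible since $C$ is atomless (it is $2$-divisible). Then $(x,c)=(x_1,c')+(x_2,c'')$ is the required splitting. If $x_1=0$, the splitting $(x,c)=0+(x,c)$ works instead.

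\emph{Failure of almost unperforation.} Suppose $a,b\in C$ satisfy $(n+1)a\le nb$ and $a\not\le b$. I will try to choose $x,y\in N$ with $(n+1)x<ny$ and consider $(x,a)$ and $(y,b)$ in $P$. Then $(n+1)(x,a)\le n(y,b)$, with the difference witnessed coordinatewise, using that nonzero differences in both coordinates give an element of $P$. Moreover $(x,a)\not\le(y,b)$, since any witness would project to a witness of $a\le b$ in $C$. The equality case $(n+1)a=nb$ needs separate care and is the reason to aim for a \emph{strict} inequality in $C$ with a nonzero witness.

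\emph{Constructing $C$, the main obstacle.} Every cone of the form $\{0\}\cup\{g\colon \text{strictly positive}\}$ inside an ordered $\mathbb Q_2$-vector space is unperforated, since $2$-divisibility pushes such cones towards convexity. Perforation therefore has to come from torsion in the ambient group. My first attempt is to take a countable group $H=\mathbb Q_2\oplus T$, with $T$ a finite group of odd order (for instance $\mathbb Z/3$), so that $H$ is still $2$-divisible. I would then let $C=\{0\}\cup\{(s,t)\colon s>0,\ t\in S_s\}$ for a suitable family of allowed torsion components $S_s$ chosen so that the resulting cone is closed under addition and halving.

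The allowed sets $S_s$ must be arranged so that for some $a$ and $b$ the difference $b-a$ lands on a forbidden torsion component, which gives $a\not\le b$, while $nb-(n+1)a$ lands on an allowed one. The verifications of simplicity, strong refinement (matrix entries chosen with positive $s$-parts and adjusted torsion) and cancellativity are routine once $C$ sits inside a group.

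The delicate point is reconciling halving with the forbidden torsion pattern. If no choice of $S_s$ works, the fallback is to build $C$ directly as a direct limit of finitely generated refinement cones. The perforated relation would be imposed at the first stage, in the spirit of $M_0$ from Example~\ref{ex:perforated_cone}, and $2$-divisibility and refinement would be added step by step while checking that $a\le b$ is never introduced.
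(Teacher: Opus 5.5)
The reduction is correct and matches the paper's own: $P$ is the conical product of $N$ with an auxiliary countable, cancellative, $2$-divisible, simple cone $C$ with strong refinement that is not almost unperforated, and $\tau$ is the projection. (In the perforation step you only need $ny-(n+1)x\in N$, zero or nonzero according as the witness in $C$ is; $x=nz$, $y=(n+1)z$ or $x=z$, $y=2z$ will do.) The gap is that you never produce $C$, and that is the entire substance of the theorem.

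Your first attempt cannot succeed for any finite $T$ of odd order:
\begin{enumerate}
\item Halves in $\mathbb Q_2\oplus T$ are unique. So halving and additivity force $2^{-1}(t+t')\in S_s$ whenever $t,t'\in S_s$. In a finite group of odd order this makes each nonempty $S_s$ a coset $\sigma(s)+K_s$ of a subgroup.
\item Halving and additivity then give $K_s\subseteq K_{s/2}$ and $K_s\subseteq K_{s+s'}$.
\item The set $\Sigma=\{s\colon S_s\neq\emptyset\}$ is a $2$-divisible subsemigroup of the positive dyadics, so it is the positive part of $g\,\mathbb Q_2$ for some odd $g$. Hence for $s,s'\in\Sigma$ you can write $s'=s/2^k+r$ with $r\in\Sigma$. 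It follows that $K_s=K$ is constant and $\sigma$ is additive modulo $K$.
\item But then $s_a<s_b$ already implies $b-a\in C$, i.e.\ $a\le b$. On the other hand, $(n+1)a\le nb$ with $a\neq 0$ forces $s_a<s_b$.
\end{enumerate}
So every cone of your proposed shape is almost unperforated.

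Your fallback only describes what would have to be proved. When refinement and halving witnesses are adjoined stage by stage, you must show that $a\le b$ is never forced and that cancellativity and simplicity survive. You give no mechanism for this, and it is precisely where the difficulty lies.

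This is the nontrivial input the paper takes from Wehrung \cite{Wehrung:Embedding_in_refinement_monoids}: the countable simple cancellative cone $M_0=\{0,2,3,\dots\}$ order-embeds into a cancellative divisible simple refinement cone $M$. A countable closure inside $M$ (adding witnesses for refinement, division and simplicity) keeps it countable. Since the inclusion of $M_0$ remains an order embedding, $3\cdot 2\le 2\cdot 3$ and $2\not\le 3$ persist. Taking $C$ to be this cone, your reduction completes the proof.
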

	\begin{proof}
		Consider the cone~$M_0=\{0,2,3,\dots\}\subseteq \mathbb N$ described in Example \ref{ex:perforated_cone}. It is countable, simple, cancellative, and lacks almost unperforation. By~\cite[Proposition 2.5 and Theorem 2.6]{Wehrung:Embedding_in_refinement_monoids}, there is an extension~$M$ of~$M_0$ which is a cancellative divisible simple refinement cone. Since~$M_0$ is countable, we may replace~$M$ by a countable subcone as follows. 
		Set~$M^{(0)} = M_0$. At each stage~$k \geq 0$, for every equation~$a_1 + a_2 = 
		b_1 + b_2$ with~$a_i, b_i \in M^{(k)}$ and every element~$a \in M^{(k)}$, adjoin 
		to~$M^{(k)}$ a finite set of witnesses for the refinement of that equation and for 
		the divisibility of~$a$, all of which exist in~$M$ by assumption. The resulting 
		cone~$M^{(k+1)}$ is again countable. Setting~$M' = \bigcup_{k \geq 0} M^{(k)}$, 
		we obtain a countable subcone of~$M$ containing~$M_0$ which is divisible and has the refinement property. 
		
		Replacing~$M$ by 
		$M'$, we may assume without loss of generality that~$M$ is countable. Since ~$3\cdot 2\leq 2\cdot 3$, but~$2\not\leq 3$ in~$M$ as well, the cone~$M$ is also not almost unperforated. Since~$M$ is divisible, it is in particular atomless, therefore~$M$ has the strong refinement property. For the same reason~$N$ does have the strong refinement property. 
		
		Define~$P\defeq M\times_c N$. The cone~$P$ is countable and cancellative. Moreover by Proposition \ref{prop.reducedprod}, it is simple,~$2$-divisible, and it satisfies the (strong) refinement property.
		The failure of almost unperforation follows from the fact that given any $n\in N$, we have~$3\cdot(2, 2n)\leq 2\cdot (3,3n)$ but~$(2,2n)\not\leq (3,3n)$. Consider the canonical projection~$\pi_N\colon P\to N$. This is a conical homomorphism and it satisfies the Vaught property since if $b=\pi_N(a,b)=x+y$, then $x=\pi_N(\frac{a}{2},x)$ and $y=\pi_N(\frac{a}{2},y)$.
	\end{proof}
	
	By assuming~$N$ to be divisible, it is easy to get a divisible~$P$. The theorem is stated for~$2$-divisible cones since we will be particularly interested in the case~$N=(\mathbb Q_2)_+$.
	
	\begin{prop}\label{thm.constructionQ}
		There exists a countable divisible simple refinement cone~$Q$ which is not almost unperforated and admits no nontrivial states.
	\end{prop}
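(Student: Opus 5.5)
The plan is to mimic the proof of Theorem~\ref{thm.constructionP}, replacing the factor~$N$ by a cone in which every nonzero element is paradoxical. Start from the cone~$M$ constructed there: it is countable, divisible, simple, has refinement (hence strong refinement, being atomless), and fails almost unperforation, witnessed by the inequality $3\cdot 2\leq 2\cdot 3$ together with $2\not\leq 3$. For the second factor take the two-element cone $\{0,\infty\}$ with $\infty+\infty=\infty$. It is countable, trivially divisible and simple, and it has strong refinement: any equation with nonzero entries is $\infty+\infty=\infty+\infty$, and this is refined by all $x_{i,j}=\infty$. Define
\[
Q\defeq M\times_c\{0,\infty\}\cong (M\setminus\{0\})\cup\{0\}\text{ with }x+y\text{ computed in }M,
\]
which is essentially $M$. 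This choice does not work: $M$ is cancellative and has nontrivial states (e.g.\ the inclusion of $M_0$ into $\N$ extends in spirit). So we need a genuinely different second factor.

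The fix is to take instead a countable divisible simple refinement cone~$S$ in which every nonzero element is properly infinite, namely $2x=x$ for all~$x$. The simplest candidate is a countable atomless Boolean algebra viewed as a join-semilattice. Concretely, take the nonzero ideals of~$\N$ modulo finite, or more simply~$S$ equal to the positive dyadic rationals with \emph{max} as addition and $0$ adjoined. Then $x\leq y$ algebraically iff $x\le y$ in~$\mathbb R$. Refinement holds for distributive lattices: $\max(a_1,a_2)=\max(b_1,b_2)$ can be refined by $x_{i,j}=\min(a_i,b_j)$, and these are nonzero when all entries are nonzero, which gives strong refinement. Divisibility holds since $x=n\cdot x$. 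However, $S$ is not simple, since $x\le ny=y$ fails for $y<x$. So we use instead $S=\{0,\infty\}$ and set $Q\defeq M\times_c S'$, where we need a simple strongly refining cone~$S'$ in which every element is idempotent. Simplicity together with idempotence forces $S'=\{0,\infty\}$, and then $Q\cong M$, as seen above.

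Hence the right strategy is to make the product absorb states differently. Take $Q\defeq M\times_c N\times_c\{0,\infty\}$? This again reduces to $M\times_c N$. The alternative I will follow is this: let $Q$ be the cone obtained from $M$ by adjoining the relation making the element $u=2$ satisfy $2u\le u$. Rather than quotienting, realise it via Wehrung's embedding theorem. Consider the cone $M_1=\{0\}\cup\{(m,\infty)\colon m\in M_0\setminus\{0\}\}$... this still looks like~$M_0$.

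The cleanest route is to apply \cite[Proposition 2.5 and Theorem 2.6]{Wehrung:Embedding_in_refinement_monoids} directly to a better seed. Take $Q_0=\{0,a,b\}$-generated: the commutative monoid generated by $a,b$ with relations $2a=a$, $2b=b$, $a+b=b$, so that $Q_0=\{0,a,b\}$. This is simple once we also impose $b\le a$? If $a+b=a$ as well, then $a=b$. So the seed must be infinite. Choose $Q_0\defeq M_0\times_c\{0,\infty\}$ with the twisted addition $(m,\infty)+(m',\infty)=(m+m',\infty)$ together with the extra preorder identification $x+y\ge x$. This is the main obstacle: finding a simple, perforated, conical seed in which every element is paradoxical. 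My first attempt would be the quotient of $M_0$ by the congruence $x\equiv y$ iff $x,y\ge 2$ and $x\equiv y\pmod 2$... The failure of almost unperforation requires $(n+1)x\le ny$ and $x\not\le y$, and in a two-element-class quotient this is achievable: take classes $e$ (even $\geq2$) and $o$ (odd $\ge3$), with $e+e=e$, $e+o=o$, $o+o=e$. Then $o\not\le e$? We have $e+o=o$, so $e\le o$; and $o\le e$ would need $o+c=e$, which holds for $c=o$. So this seed is almost unperforated as well; the search for a suitable seed would continue from here. Once a seed is found, the countable embedding argument and the countable-closure construction of Theorem~\ref{thm.constructionP}, extended to also close under witnesses of paradoxicality, finish the proof, and Tarski's theorem (Theorem~\ref{thm:original_Tarski}) converts paradoxicality of every nonzero element into the absence of nontrivial states.
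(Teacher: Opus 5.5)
\textbf{There is a genuine gap.} The step that carries all the content of the proof is never completed. That step is exhibiting a concrete simple, conical, perforated seed containing a paradoxical element, to which Wehrung's embedding theorem can then be applied. You end with ``the search for a suitable seed would continue from here''. The routes you try also cannot succeed, for structural reasons:
\begin{itemize}
\item No conical product $M\times_c S$ with $M$ the cancellative cone from Theorem~\ref{thm.constructionP} can work, whatever $S$ is. In a cancellative conical monoid no nonzero element is paradoxical, since $(n+1)y+c=ny$ cancels to $y+c=0$. So Tarski's theorem gives a nontrivial state on $M$, and it pulls back along the projection $\pi_M$ to a nontrivial state on $M\times_c S$.
\item Seeds in which elements are idempotent are useless. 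If $2x=x$ and $2y=y$, then $(n+1)x\le ny$ is literally $x\le y$, so such cones are automatically almost unperforated. As you observed, a simple one even collapses to $\{0,\infty\}$.
\item Your parity quotient of $M_0$ fails for a related reason. It identifies the small elements $2$ and $3$ with large ones, which destroys the witness $2\not\le 3$.
\end{itemize}

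\textbf{The missing idea} is to collapse only the large elements. The paper takes $Q_0=\{0,2,3,4,5,6,\infty\}$, the quotient of $M_0$ by the congruence that merges all elements $\ge 7$ into a single absorbing element $\infty$.
\begin{itemize}
\item The perforation witness survives: $3\cdot 2=6=2\cdot 3$, while $2+c=3$ has no solution in $Q_0$.
\item $Q_0$ is simple: every nonzero element has a multiple equal to $\infty$, and $\infty$ dominates everything.
\item $\infty+\infty=\infty$, so $\infty$ is paradoxical.
\end{itemize}

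You also do not need to design every element to be paradoxical. In a simple cone a single paradoxical element already rules out nontrivial states. A state with $\nu(\infty)=0$ vanishes identically, because $\infty$ is an order unit. A state with $\nu(\infty)=\infty$ is infinite on every nonzero $x$, because $\infty\le nx$ for some $n$. Equivalently, use Tarski's theorem plus simplicity, as the paper does.

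With this seed in hand, the rest of your outline goes through as in the paper: Wehrung's simple refinement extension, the countable-closure argument, and preservation of the relation $\infty+\infty=\infty$ and of $2\not\le 3$.
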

	\begin{proof}
		Consider the finite simple cone~$Q_0=\{0,2,3,4,5,6,\infty\}$, where addition is defined as usual when the sum is at most~$6$, and it is set to~$\infty$ otherwise. The element~$\infty$ satisfies~$\infty+\infty= \infty$, hence it is paradoxical. Moreover, $Q_0$ is not almost unperforated since~$3\cdot2\leq 2\cdot 3$, but~$2\not\leq 3$. Again by~\cite[Proposition 2.5 and Theorem 2.6]{Wehrung:Embedding_in_refinement_monoids},  there is an extension~$Q$ of~$Q_0$ which is a simple refinement cone. Arguing as in the proof of Theorem \ref{thm.constructionP}, the extension~$Q$ can be chosen to be countable. The element~$\infty$ is paradoxical in~$Q$ as well. By Theorem \ref{thm:original_Tarski},~\(Q\) admits no nontrivial states normalized at $\infty$. Since $Q$ is simple, it does not admit any nontrivial state.  
	\end{proof}

	
	\section{Type semigroups} \label{sec.typesem}
	
	A useful way to detect dynamical comparison is by encoding dynamical equivalence of sets in the structure of a preordered commutative monoid, usually called a \emph{type semigroup}. 
	Given a group~\(G\) acting on a zero-dimensional space~\(X\), one can construct such a monoid as a quotient of~\(\Contc(X, \N)\),
	the set of compactly supported continuous functions~\(X\to\N\), modulo a relation that resembles dynamical equivalence of the supports of the functions.
	Almost unperforation of this monoid turns out to be equivalent to comparison of the associated dynamical system in several settings. Kerr \cite{Kerr:Dynamical_Toms-Winter} showed one direction of the equivalence for free minimal actions of countable discrete amenable groups on the Cantor space. The equivalence was proved in the general setting of second countable ample minimal groupoids in \cite[Lemma 3.5]{Ara_Bonicke_Bosa_Li:type_semigroup} (for a generalised definition of comparison for groupoids). The second countability assumption was eased to \(\sigma\)-compactness in \cite{Kwasniewski-Meyer-Prasad:Type_semigroups}. Ma \cite[Theorem B]{Ma:type_semigroups_comparison} proved that a version of dynamical comparison for non-minimal systems is equivalent to a property of the type semigroup which is weaker than almost unperforation. 
	
	Let~\(\Gr\) be an ample \'etale groupoid, for example the transformation groupoid~\(G\ltimes X\) associated to the action of~\(G\) on a zero-dimensional space~\(X\). Let~\(\B\) be the collection of compact open bisections of the groupoid and~\(\OO\) the set of idempotents in~\(\B\) (see Example \ref{ex:B_is_Boolean} for details).
	We will now define the type semigroup associated to~\(\Gr\) in the sense of \cite{Ara_Bonicke_Bosa_Li:type_semigroup}. 
	Further, we will show that such a type semigroup is isomorphic to the enveloping monoid of \(\B\) modulo Green's relation \(\D\). Such enveloping monoids are called \emph{type monoids} in \cites{Wehrung:Monoids_Boolean, Kudryavtseva:Type_monoids}. The equivalence of these two constructions of the type semigroup was used implicitly in \cite{Ara_Bonicke_Bosa_Li:type_semigroup}.
	
	\begin{defn}\label{defn:type_semigroup_Ara}
		Let~\(\Gr\) be an ample \'etale groupoid
		with a locally compact Hausdorff unit space~\(\Gu\) and let~\(\B\) be the collection of its compact open bisections. For~\(f,g\in\Contc(\Gu,\N_0)\), set~\(f\sim_\Gr g\) if there exist compact open bisections~\(W_1,\dots,W_n\) such that
		\[f=\sum_{i=1}^n 1_{s(W_i)}\andSep\sum_{i=1}^n 1_{r(W_i)}=g.\]
		We define the type semigroup associated to the action to be the cone
		\[S(\Gr)\defeq\Contc(\Gu,\N_0)/\sim_\Gr\]
		with pointwise addition and equipped with the algebraic preorder.
	\end{defn}
	
	This article is mainly concerned with ample groupoids of the form~\(\Gr\defeq G\ltimes X\). In this case, we have~\(f\sim_\Gr g\) if and only if there exist group elements~\(a_1,\dots,a_n\in G\) and open sets~\(U_1,\dots,U_n\subseteq X\) such that
	\[
	f=\sum_{i=1}^n 1_{U_i}\andSep\sum_{i=1}^n1_{a_iU_i}=g.
	\]
	
	We want to show that the type semigroup can also be realised as the enveloping monoid of a quotient of~\(\B\).	
	Let~\((P,+)\) be a partial cone
	and let~\(\cl{P}=\{\cl{x}\colon x\in P\}\) be a disjoint copy of the underlying set of~\(P\).
	Define the \emph{enveloping monoid} of~\(P\) to be the free commutative monoid generated by~\(\cl{P}\) subject to the following relations:
	\begin{enumerate}
		\item \(\cl{0}=0\) and
		\item  \(\cl{x}+\cl{y}=\cl{z}\) whenever~\(x+y\) is defined in~\(P\) and equals~\(z\).
	\end{enumerate}
	
	This definition is easily seen to be equivalent to the one given by Wehrung in \cite[Section 2.1]{Wehrung:Monoids_Boolean}.
	This monoid has the universal property that given any commutative monoid~\(N\) and any partial-monoid homomorphism~\(f\colon P\to N\), there exists a unique monoid homomorphism~\(\bar{f}\colon U_\mon(P)\to N\) such that~\(f=\bar{f}\circ\iota\) (where~\(\iota\) is the inclusion map~\(P\hookrightarrow F_\mon(P)\)) \cite[Proposition 2.1.7]{Wehrung:Monoids_Boolean}.
	
	For an element~\(x\) in an inverse semigroup~\(S\), set~\(d(x)=x\inv x\) and~\(r(x)=xx\inv\). For~\(x,y\in S\), Green's relations are given by~\(x\mathrel{\mathcal{L}}y\) if~\(d(x)=d(y)\);~\(x\mathrel{\mathcal{R}}y\) if~\(r(x)=r(y)\); and~\(x\mathrel{\mathcal{D}}y\) if~\(x\mathrel{\mathcal{L}\circ\mathcal{R}}y\) (equivalently if~\(x\mathrel{\mathcal{R}\circ\mathcal{L}}y\)).  The relation~\(\D\) is an equivalence relation on~\(S\).
	When~\(S\) is Boolean, denote by~\(\Typ(S)\) the enveloping monoid of the partial cone~\(S/\D\) and equip it with the algebraic preorder.
	
	\begin{ex}\label{ex:Type_Inv}
		Let~\(B\) be a Boolean ring admitting a measure~\(\mu\). The set~\(\Aut(B,\mu)\) of automorphisms of~\(B\) that preserve~\(\mu\) is a group, with the composition operation. This is a subgroup of~\(\Aut(B)\), therefore it acts naturally on $B$. The inverse of a partial automorphism~\(f\colon B\downarrow a\to B\downarrow b\) is the partial automorphism~\(f\inv\colon B\downarrow b\to B\downarrow a\) such that~\(f\inv f\) is the identity on~\(B\downarrow a\) and~\(f f\inv\) is the identity on~\(B\downarrow b\).
		The set~\(S\defeq\Inv(B,\Aut(B,\mu))\) is a Boolean inverse subsemigroup of~\(\Inv(B)\). For~\(f,h\in S\), we have~\(f\mathrel{\D} h\) if and only if there exists~\(g\in S\) such that~\(r(gf)=d(h)\). That is,~\(f\mathrel{\D} h\) if and only if their domains and ranges are isomorphic as Boolean algebras and the isomorphism can be exhibited by a partial automorphism that can be decomposed into automorphisms of~\(B\) that preserve the measure~\(\mu\).
		Since~\(f\mathrel{\D} f\inv\) for every partial automorphism~\(f\) and~\(\D\) is an equivalence relation,~\(S/\D\)
		is a commutative monoid with the composition operation. 
	\end{ex}
	
	\begin{ex}\label{ex:B/D_partial_monoid}
		Let~\(\B\) be the Boolean inverse semigroup of all compact open bisections of an ample groupoid~\(\Gr\) (see Example \ref{ex:B_is_Boolean}).
		Green's relation~\(\mathcal{D}\) is an equivalence relation on~\(\B\): for elements~\(U,V\in\B\), we have~\(U\mathrel{\mathcal{D}}V\) if and only if there exists~\(W\in\B\) with
		\begin{equation*}\label{eq:ample_Boolean_semigroup}
			s(U)=s(W)\quad\text{and}\quad r(W)=r(V).
		\end{equation*}
		So~\(U\mathrel{\D}s(U)\) for every~\(U\in\B\). Therefore,~\(\B/\mathcal{D}=\OO/\D\). For~\([U],[V]\in\B/\D\), set~\([U]\oplus[V]\defeq[U'\cup V']\) whenever~\(U'\cap V'=\emptyset\) for some~\(U'\in[U]\) and~\(V'\in[V]\). Then~\((\B/\D,\oplus)\) is a partial commutative monoid. 
		The identity element~\(0\) is the equivalence class of the empty set. 
		This monoid is conical since if~\([U]\oplus[V]=[\emptyset]\) for some~\(U,V\in\B\), then~\(U=V=\emptyset\).
	\end{ex}
	
	\begin{lem}\label{lem:sum_in_envelope}
		Let~\(\Gr\) be an ample \'etale groupoid with a locally compact Hausdorff unit space~\(\Gu\). Let $\B$ and~\(\OO\) be the set of compact open bisections of $\Gr$ and~\(\Gu\) respectively.
		Suppose~\(U_1,\dots,U_n,V_1,\dots,V_m\in\OO\)
		satisfy~$\sum_{i=1}^n 1_{U_i}=\sum_{j=1}^m 1_{V_j}$ in~$\Contc(\Gu)$. Then~$\sum_{i=1}^n[U_i]=\sum_{j=1}^m[V_j]$ in~$\Typ(\B)$. 
	\end{lem}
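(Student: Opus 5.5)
The plan is to refine both families into a common partition by compact open sets determined by the Boolean algebra generated by the $U_i$ and $V_j$, and then use the defining relations of the enveloping monoid to rewrite each side in terms of the atoms of this partition.

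Let $\mathcal A$ be the finite Boolean subalgebra of $\OO$ generated by $U_1,\dots,U_n,V_1,\dots,V_m$. Note that $\OO$ is a generalised Boolean algebra, but the union $W=\bigcup_i U_i\cup\bigcup_j V_j$ is compact open, so we may work inside $\OO\downarrow W$, which is a Boolean algebra. Its atoms are the nonempty sets
\[
A_{S,T}=\bigcap_{i\in S}U_i\cap\bigcap_{i\notin S}(W\setminus U_i)\cap\bigcap_{j\in T}V_j\cap\bigcap_{j\notin T}(W\setminus V_j)
\]
for $S\subseteq\{1,\dots,n\}$ and $T\subseteq\{1,\dots,m\}$. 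Each of them is compact open, and they are pairwise disjoint. Each $U_i$ is the disjoint union of the atoms $A$ contained in it. Since disjoint unions in $\OO$ are exactly the defined sums in the partial cone $\B/\D$ (Example~\ref{ex:B/D_partial_monoid}), relation~(2) of the enveloping monoid, applied inductively on the number of pieces, gives
\[
[U_i]=\sum_{A\subseteq U_i}[A]\quad\text{in }\Typ(\B),
\]
and similarly for each $V_j$. Summing over $i$ yields $\sum_i[U_i]=\sum_A c_A[A]$, where $c_A$ is the number of indices $i$ with $A\subseteq U_i$. On the other side, $\sum_j[V_j]=\sum_A d_A[A]$, where $d_A$ is the number of indices $j$ with $A\subseteq V_j$.

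It remains to show that $c_A=d_A$ for every atom $A$. Pick a point $x\in A$; this is possible because atoms are nonempty. Since $A$ is an atom of $\mathcal A$, for each $i$ either $A\subseteq U_i$ or $A\cap U_i=\emptyset$, and the same holds for each $V_j$. Hence $c_A=\sum_i 1_{U_i}(x)$ and $d_A=\sum_j 1_{V_j}(x)$, and these are equal by hypothesis. Therefore the two sums coincide in $\Typ(\B)$.

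The only mildly delicate point is that the partial sum in $\B/\D$ is defined on classes via disjoint representatives. We need $[A]\oplus[A']=[A\cup A']$ for disjoint $A,A'\in\OO$, which holds by the definition in Example~\ref{ex:B/D_partial_monoid}. Beyond this, the argument is finite bookkeeping.
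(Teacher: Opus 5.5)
Your proof is correct and follows the paper's argument: you refine into the atoms of the finite Boolean algebra generated by the $U_i$ and $V_j$ inside the compact open set $W$ (which coincides with the paper's support $Y$), expand each class as a sum over atoms using the enveloping-monoid relation, and compare multiplicities pointwise. You are slightly more explicit than the paper about why disjoint unions give defined sums in $\B/\D$ and why the coefficients agree.
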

	\begin{proof}
		Let~$Y$ be the support of~$\sum 1_{U_i}$. Let~$\mathcal A$ be the Boolean subalgebra of~$\Clop(Y)$ generated by all the~$U_i$'s and~$V_j$'s. Since~$\mathcal A$ is finitely generated, it has finitely many atoms; we denote them by~$A_1,\dots,A_N$. Then for each~$i$
		\[
		\sum_{i=1}^n [U_i]=\sum_{l=1}^N c_l[A_l],
		\]
		where~$c_l=\#{\{i\colon A_l\subseteq U_i\}}$. Analogously:~$\sum_{j=1}^m[V_j]=\sum_{l=1}^N c_l'[A_l]$, where~$c_l'=\{j\colon A_l\subseteq V_j\}$. Since~$\sum_{i=1}^n 1_{U_i}=\sum_{i=l}^N c_l1_{A_l}$ and~$\sum_{j=1}^m 1_{V_j}=\sum_{l=1}^N c_l'1_{A_l}$ we have that~$c_l=c_l'$ for each~$l$. Hence~$\sum_{i=1}^n[U_i]=\sum_{j=1}^m[V_j]$ in~$\Typ(\B)$.
	\end{proof}
	
	\begin{prop}\label{prop:equivalence_of_types}
		Let~\(\Gr\) be an ample \'etale groupoid
		with a locally compact Hausdorff unit space~\(\Gu\). 
		Denote by~\(\B\) the Boolean inverse semigroup of all compact open bisections of~\(\Gr\). Then
		\[S(\Gr)\cong \Typ(\B)\]
		as refinement cones.
	\end{prop}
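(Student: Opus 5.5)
We will construct mutually inverse monoid homomorphisms between $\Typ(\B)$ and $S(\Gr)$. The refinement property and the algebraic preorder then pass automatically, since both cones carry the algebraic preorder and the isomorphism is additive.

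\emph{The map $\Typ(\B)\to S(\Gr)$.} Define $\phi_0\colon \B/\D\to S(\Gr)$ by $\phi_0([U])\defeq[1_{s(U)}]$. To see this is well defined, suppose $U\mathrel{\D}V$. By Example~\ref{ex:B/D_partial_monoid} there is a $W\in\B$ with $s(W)=s(U)$ and $r(W)=s(V)$ (using $V\mathrel{\D}s(V)$). The single bisection $W$ witnesses $1_{s(U)}\sim_\Gr 1_{s(V)}$. The map $\phi_0$ is a partial-monoid homomorphism: if $[U]\oplus[V]=[U'\cup V']$ with $U'\cap V'=\emptyset$, where we may take $U',V'\in\OO$, then $1_{U'\cup V'}=1_{U'}+1_{V'}$. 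Also $\phi_0([\emptyset])=0$. By the universal property of the enveloping monoid, $\phi_0$ extends to a monoid homomorphism $\phi\colon\Typ(\B)\to S(\Gr)$.

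\emph{The map $S(\Gr)\to\Typ(\B)$.} Every $f\in\Contc(\Gu,\N_0)$ can be written as $f=\sum_i 1_{U_i}$ with $U_i\in\OO$, for instance by taking the level sets $\{f\ge k\}$, which are compact open. Define $\psi_0(f)\defeq\sum_i[U_i]$. By Lemma~\ref{lem:sum_in_envelope}, this does not depend on the chosen decomposition, and hence $\psi_0$ is additive on $\Contc(\Gu,\N_0)$.

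Next, $\psi_0$ is constant on $\sim_\Gr$-classes. Suppose $f=\sum_i 1_{s(W_i)}$ and $g=\sum_i 1_{r(W_i)}$. Since $s(W_i)\mathrel{\D}W_i\mathrel{\D}r(W_i)$, we have $[s(W_i)]=[r(W_i)]$ in $\B/\D$. Lemma~\ref{lem:sum_in_envelope} then gives $\psi_0(f)=\sum_i[s(W_i)]=\sum_i[r(W_i)]=\psi_0(g)$. Therefore $\psi_0$ descends to a monoid homomorphism $\psi\colon S(\Gr)\to\Typ(\B)$. One should first check that $\sim_\Gr$ is already transitive, or else argue with its transitive closure; either way, constancy on each single step is enough.

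\emph{The two maps are inverse.} On generators, $\psi\phi([U])=\psi([1_{s(U)}])=[s(U)]=[U]$, so $\psi\phi=\id$, because $\Typ(\B)$ is generated by $\B/\D$. Conversely, $\phi\psi([f])=\sum_i[1_{U_i}]=[f]$, so $\phi\psi=\id$. Hence $\phi$ is an isomorphism of monoids. Both cones are conical: for $S(\Gr)$ this holds because $f+g=0$ forces $f=g=0$, and transport along the isomorphism handles $\Typ(\B)$. Refinement transfers along the isomorphism as well; for $\Typ(\B)$ it is known from Wehrung's work on type monoids of Boolean inverse semigroups.

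\emph{Main obstacle.} The delicate point is showing that $\psi_0$ is well defined and respects $\sim_\Gr$, i.e.\ that different decompositions into indicator functions give the same element of the enveloping monoid. Lemma~\ref{lem:sum_in_envelope} is exactly the tool for this. It works by refining all the sets to the atoms of a finite Boolean subalgebra and using the enveloping relations $\cl{x}+\cl{y}=\cl{x\oplus y}$.
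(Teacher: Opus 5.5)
Your proof is correct and follows essentially the same route as the paper. You use the same map $\Typ(\B)\to S(\Gr)$, induced by $U\mapsto[1_U]$ through the universal property, and the same key facts: Lemma~\ref{lem:sum_in_envelope} together with $[s(W)]=[r(W)]$. The only difference is that you construct an explicit inverse $\psi$ rather than checking injectivity and surjectivity directly, which has the minor benefit of making explicit that constancy on single $\sim_\Gr$-steps suffices, a point the paper's injectivity argument uses tacitly.
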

	\begin{proof}
		The map
		\[
		\tau\colon \OO\to S(\Gr)\quad\text{with}\quad U\mapsto[1_U]
		\]
		descends to a partial-monoid homomorphism
		\(
		\OO/\D\to S(\Gr)
		\)
		since~$U\mathcal{\mathcal{D}}V$ implies~$1_U\sim_\Gr 1_V$.
		By the universal property of the enveloping monoid,
		this induces a conical homomorphism
		\[
		\tau'\colon \Typ(\B)\to S(\Gr).
		\]
		We show that this map is an isomorphism. Surjectivity follows from the fact that $S(\Gr)$ is generated by elements of the form $[1_U]$. For injectivity, suppose~$\sum_{i=1}^n[U_i]$ and~$\sum_{j=1}^m[V_j]$ are mapped to the same element in~$S(\Gr)$. Then by definition~$\sum_{i=1}^n 1_{U_i} \allowbreak \sim_\Gr \sum_{j=1}^m 1_{V_j}$, hence there are compact open bisections~$W_1,\dots, W_l$ such that~$\sum_{i=1}^n 1_{U_i} \allowbreak =\sum_{k=1}^l 1_{s(W_k)}$ and~$\sum_{j=1}^m 1_{V_j}=\sum_{k=1}^l 1_{r(W_k)}$. Therefore, we get
		\[
		\sum_{i=1}^n[U_i]=\sum_{k=1}^l[s(W_k)]=\sum_{k=1}^l[r(W_k)]=\sum_{j=1}^m[V_j]
		\]
		by Lemma \ref{lem:sum_in_envelope} and Example \ref{ex:B/D_partial_monoid}.
		This shows the isomorphism of the two cones.
		
		Finally, observe that~\(\Typ(\B)\) is a refinement cone
		since~\(\B\) is Boolean (see~\cite[Corollary 4.1.4]{Wehrung:Monoids_Boolean} and \cite[Proposition 5.4]{Kudryavtseva:Type_monoids}).
	\end{proof}
	
	\begin{rmk}\label{rmk:states=measures}
		States on a type semigroup are in bijection with regular \(\Gr\)-invariant Borel measures on the unit space~\(\Gu\), which are in turn in bijection with lower semicontinuous traces on the reduced groupoid C*-algebra (see Theorem 5.16 and Proposition 5.28 in \cite{Kwasniewski-Meyer-Prasad:Type_semigroups}). Given a state~\(\nu\), the corresponding measure~\(\mu\) is defined as~\(\mu(U)=\nu([1_U])\) for~\(U\in\OO\). States are constructed from given measures in the same way.
	\end{rmk}


	\section{Fraïssé limits of measured Boolean algebras}\label{sec.FrLim}
	In this section we show that the cones built in Section \ref{sec.constructionmonoids} can be realised as type semigroups of group actions. To do so, we give a new proof of the fact that any countable refinement cone~$(M,u)$, with order unit~\(u\), admits a groupoid-induced measure~$\mu\colon B\to M$ normalized at~$u$ (see \cite[Theorem 4.8.7]{Wehrung:Monoids_Boolean}). In our approach, the measure~$\mu$ is obtained as a \Fraisse limit. The flexibility provided by the \Fraisse limit structure will be useful for the genericity arguments in Section \ref{sec.minnocomp}.
	
	We begin by recalling some model theoretic language, and Fraïssé's theorem. For a more exhaustive description, see \cite[Section 7]{Hodges:Model_theory}.
	
	A \emph{signature}~$L$ is a set of relation symbols,
	function symbols, and constant symbols, each relation and function symbol equipped with
	an \emph{arity}. An \emph{$L$-structure}~$A$ consists of a nonempty set~$A$ together with an interpretation of each symbol in~$L$:
	each~$n$-ary relation symbol~$R$ is interpreted as a subset~$R^A \subseteq A^n$,
	each~$n$-ary function symbol~$f$ as a map~$f^A\colon  A^n \to A$, and each constant
	symbol~$c$ as an element~$c^A \in A$. An \emph{embedding} of~$L$-structures
	$\varphi\colon  A \hookrightarrow B$ is an injective map~$\varphi\colon  A \to B$
	that preserves and reflects the interpretations of all symbols.
	
	\begin{thm}[Fraïssé's Theorem]\label{thm:Fraisse}
		Let~$L$ be a countable signature and let~$\mathcal{C}$ be a non-empty countable set of finitely generated~$L$-structures which has the following properties:
		\begin{enumerate}[label=(\alph*)]
			\item Hereditary property (HP): If~$A$ is a substructure of~$B\in \mathcal C$, then~$A\in \mathcal C$.
			\item Amalgamation property (AP): If~$\beta\colon A\hookrightarrow B$ and~$\gamma\colon A\hookrightarrow C$ are embeddings, then there are~$D\in\mathcal{C}$ and embeddings~$\beta'\colon B\hookrightarrow D$ and~$\gamma'\colon C\hookrightarrow D$ such that~$\beta'\beta=\gamma'\gamma$.
			\item Joint embedding property (JEP): for~$A$ and~$B$ in~$\mathcal{C}$, there is~$C\in\mathcal C$ such that both~$A$ and~$B$ embed in~$C$.
		\end{enumerate}
		Then there is an~$L$-structure~$D$ unique up to isomorphism, such that 
		\begin{enumerate}
			\item\label{thm:Fraisse1} $D$ is countable;
			\item\label{thm:Fraisse2}~$\mathcal{C}$ is the set of finitely generated substructures of~$D$ up to isomorphism;
			\item\label{thm:Fraisse3}~$D$ is homogeneous, that is, every isomorphism between finitely generated substructures of~$D$ extends to an automorphism of~$D$.
		\end{enumerate}
	\end{thm}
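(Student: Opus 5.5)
The plan is the standard back-and-forth proof of Fraïssé's theorem: build $D$ as the union of a chain, get homogeneity from a one-point extension property, and get uniqueness by a back-and-forth argument.

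\textbf{The extension property.} Call a countable $L$-structure $D$ \emph{weakly homogeneous} if it satisfies the following. Whenever $A\subseteq B$ are in $\mathcal C$ and $f\colon A\hookrightarrow D$ is an embedding, there is an embedding $g\colon B\hookrightarrow D$ extending $f$.

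\textbf{Existence.} Since $L$ is countable and every member of $\mathcal C$ is finitely generated, there are only countably many pairs $(A,B)$ with $A\subseteq B$ in $\mathcal C$, up to isomorphism. I will build a chain $D_0\subseteq D_1\subseteq\cdots$ of structures in $\mathcal C$ by dovetailing over a countable list of tasks. Each task consists of a pair $(A,B)$ together with an embedding $f\colon A\hookrightarrow D_k$.
\begin{itemize}
\item For a fixed $k$ and a finitely generated $A$, there are only countably many embeddings $A\hookrightarrow D_k$, because an embedding is determined by the images of finitely many generators and $D_k$ is countable.
\item Each new task is listed once it appears, so every task is eventually treated.
\item To treat the task $(A,B,f)$ at a stage $m\ge k$, apply (AP) to the embeddings $A\hookrightarrow B$ and $f\colon A\hookrightarrow D_m$. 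This gives $D_{m+1}\in\mathcal C$ containing $D_m$, with an embedding of $B$ into $D_{m+1}$ that extends $f$.
\item Interleave (JEP) steps so that every member of $\mathcal C$ embeds into some $D_k$.
\end{itemize}
Set $D=\bigcup_k D_k$, which is countable. A finitely generated substructure of $D$ lies inside some $D_k$, so it belongs to $\mathcal C$ by (HP). Together with the (JEP) steps, this gives property~\eqref{thm:Fraisse2}. Weak homogeneity holds because any embedding $A\hookrightarrow D$ lands in some $D_k$ and was therefore handled at a later stage.

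\textbf{Homogeneity and uniqueness.} The key lemma is the following back-and-forth statement. Let $D$ and $E$ be countable, weakly homogeneous, and have age $\mathcal C$. Then every isomorphism between a finitely generated substructure of $D$ and one of $E$ extends to an isomorphism $D\to E$.

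To prove the lemma, enumerate $D=\{d_i\}$ and $E=\{e_i\}$ and alternate two moves.
\begin{itemize}
\item \emph{Forth.} Given a partial isomorphism $p\colon A\to A'$, let $B$ be the substructure of $D$ generated by $A$ and the next $d_i$. Then $B$ is finitely generated, so $B\in\mathcal C$. Weak homogeneity of $E$ extends $p$ to an embedding $B\hookrightarrow E$.
\item \emph{Back.} Do the same with $p^{-1}$, adding the next $e_i$ and using weak homogeneity of $D$.
\end{itemize}
The union of the resulting increasing chain of partial isomorphisms is an isomorphism $D\to E$.

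Taking $E=D$ in the lemma gives homogeneity~\eqref{thm:Fraisse3}. For uniqueness, note that any $D$ satisfying \eqref{thm:Fraisse1}--\eqref{thm:Fraisse3} is weakly homogeneous. Indeed, given $A\subseteq B$ in $\mathcal C$ and $f\colon A\hookrightarrow D$, embed $B$ into $D$ as some $B'$ using \eqref{thm:Fraisse2}. The restriction of this embedding to $A$ and the map $f$ differ by an isomorphism between finitely generated substructures of $D$. By \eqref{thm:Fraisse3} this isomorphism extends to an automorphism of $D$, and composing it with $B\cong B'$ extends $f$. The lemma, started from the empty map, then shows that any two such structures are isomorphic. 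If $L$ has constants, the empty map is replaced by the isomorphism between the substructures generated by the constants.

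\textbf{Main obstacle.} The delicate step is the bookkeeping in the existence part: making sure that the countably many tasks, including those created at later stages, are all eventually handled. The other point needing care is that substructures generated by finitely many elements stay finitely generated, so that (HP) applies. Everything else is routine.
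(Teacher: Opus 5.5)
Your proof is correct. It is the standard argument (weak homogeneity, a chain in $\mathcal C$ built from (AP) and (JEP) with dovetailed bookkeeping, then back-and-forth for both homogeneity and uniqueness) from Hodges, \emph{Model Theory}, Section~7; the paper does not prove Fra\"iss\'e's theorem itself but only cites it from that source.
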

	
	Fix a countable refinement cone~$M\neq\{0\}$ and a nonzero element~$u\in M$. We want to apply Fraïssé's theorem to the following signature~$L$ and class of~$L$-structures~$\mathcal{C}$: 
	
	\begin{enumerate}
		\item $L$ is the language of Boolean algebras enriched with one unary relation symbol for each element of the monoid $M$. Formally $L=\{0,1,\land,\lor,\neg\}\cup \{\bar{m}\colon m\in M\}$. The relation~$\bar{m}$ should be thought of as the set of elements with measure~$m\in M$.
		\item~$\mathcal{C}$ is the class of finite~$(M,u)$-measured Boolean algebras; that is, structures in~$\mathcal{C}$ are pairs~$(B,\mu_B)$, where~$B$ is a finite Boolean algebra and~$\mu_B\colon B\to M$ is a measure with~\(\mu_B(1)=u\).
	\end{enumerate}
	
	Embeddings in the signature~$L$ are measure-preserving embeddings of Boolean algebras.
	
	\begin{prop}\label{prop.frlim}
		The class~$\mathcal{C}$ has countably many isomorphism classes. Moreover, it satisfies the hereditary property, the amalgamation property and the joint embedding property. In particular it admits a \Fraisse limit~$(B,\mu)$.
	\end{prop}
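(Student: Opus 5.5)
Since a finite Boolean algebra is determined up to isomorphism by its finite set of atoms, a finite $(M,u)$-measured Boolean algebra is determined up to isomorphism by a finite multiset $\{\mu_B(a)\colon a\in\At(B)\}$ of nonzero elements of $M$ summing to $u$. Indeed, additivity of $\mu_B$ forces $\mu_B(x)=\sum_{a\le x,\,a\in\At(B)}\mu_B(a)$. Conversely, any finite multiset of nonzero elements summing to $u$ defines such a structure: conicality of $M$ guarantees $\mu_B^{-1}\{0\}=\{0\}$. Since $M$ is countable, there are only countably many finite multisets in $M$. Hence $\mathcal C$ has countably many isomorphism classes. The signature $L$ is countable because $M$ is.

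\emph{Hereditary property.} A substructure $A$ of $(B,\mu_B)$ is a Boolean subalgebra, necessarily finite and containing $1$, with the restricted measure. That restriction is still additive, vanishes only at $0$, and satisfies $\mu(1)=u$. So $A\in\mathcal C$.

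\emph{Joint embedding property.} This follows from the amalgamation property applied over the two-element algebra $\{0,1\}$ with $\mu(1)=u$. That algebra embeds into every member of $\mathcal C$, because measure-preserving embeddings must send $1$ to $1$ and all members have $\mu(1)=u$. It also lies in $\mathcal C$, since $u\ne0$.

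\emph{Amalgamation property (main step).} Let $\beta\colon A\hookrightarrow B$ and $\gamma\colon A\hookrightarrow C$ be embeddings. We reduce to one atom of $A$ at a time. Each atom $a\in\At(A)$ splits into atoms $b_1,\dots,b_k$ of $B$ below $\beta(a)$ and atoms $c_1,\dots,c_l$ of $C$ below $\gamma(a)$, and measure preservation gives
\[
\sum_i\mu_B(b_i)=\mu_A(a)=\sum_j\mu_C(c_j).
\]
Iterating the refinement property of $M$ (the standard induction extends $2\times2$ refinement to $k\times l$) yields $x_{ij}\in M$ with $\sum_j x_{ij}=\mu_B(b_i)$ and $\sum_i x_{ij}=\mu_C(c_j)$.

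We define $D$ as the finite Boolean algebra whose atoms are the pairs $(i,j)$, over all atoms $a$ of $A$, for which $x_{ij}\neq0$, with measure $x_{ij}$ on the atom $(i,j)$. We then set $\beta'(b_i)=\bigvee_j(i,j)$ and $\gamma'(c_j)=\bigvee_i(i,j)$.

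The only delicate point is injectivity of $\beta'$ and $\gamma'$, i.e. that no $b_i$ is sent to $0$. This holds because $\mu_B(b_i)\neq0$ forces some $x_{ij}\ne0$ by conicality, and symmetrically for the $c_j$. It follows that $\beta'$ and $\gamma'$ are Boolean embeddings that preserve measure, since $\mu_D(\beta'(b_i))=\sum_j x_{ij}=\mu_B(b_i)$. Moreover $\beta'\beta(a)=\bigvee_{i,j}(i,j)=\gamma'\gamma(a)$ for every atom $a$, so $\beta'\beta=\gamma'\gamma$. Finally $\mu_D(1)=\sum_{a\in\At(A)}\mu_A(a)=u$, so $D\in\mathcal C$. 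Fraïssé's theorem (Theorem~\ref{thm:Fraisse}) then gives the limit $(B,\mu)$.
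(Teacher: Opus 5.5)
Your proposal is correct and follows the paper's proof essentially step by step: countability from that of $M$, a trivial hereditary property, joint embedding via amalgamation over $\{0,1\}$, and amalgamation done atom by atom with a refinement matrix $(x_{ij})$ whose nonzero entries become the atoms of $D$. You also spell out details the paper leaves implicit, namely the multiset description of isomorphism classes, the passage from $2\times 2$ to $k\times l$ refinement, and the injectivity of $\beta'$ and $\gamma'$.
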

	\begin{proof} Since~$M$ is countable, there are only countably many isomorphism classes. The hereditary property is trivial, and the joint embedding property will follow from amalgamation and the fact that the Boolean algebra~\(\{0,1\}\) with the only admissible normalised measure embeds into every element of~$\mathcal{C}$. It remains to show that~\(\mathcal{C}\) has the amalgamation property.
		
		Consider~\((A,\mu_A)\in\mathcal{C}\) with~\(A\neq\{0\}\).
		Suppose~$\beta\colon(A,\mu_A)\hookrightarrow (B,\mu_B)$ and~$\gamma\colon(A,\mu_A) \allowbreak \hookrightarrow (C,\mu_C)$ are embeddings, and let~$a$ be an atom of~$A$. Then 
		\begin{equation*}
			\beta(a)=\bigoplus_{b\in \At(B\downarrow \beta(a))}b
			\andSep				
			\gamma(a)=\bigoplus_{c\in \At(C\downarrow \gamma(a))}c,
		\end{equation*}
		where~$\At(B\downarrow \beta(a))$ is the set of atoms of~$B$ that are below~$\beta(a)$ and similarly for~$\At(C\downarrow \gamma(a))$. Since both~$\beta$ and~$\gamma$ are measure-preserving,
		\begin{equation*}
			\sum_{b\in \At(B\downarrow \beta(a))} \mu_B(b)=\mu_B(\beta(a))=\mu_A(a)=\mu_C(\gamma(a))=\sum_{c\in \At(C\downarrow\gamma(a))} \mu_C(c).
		\end{equation*}
		By the refinement property, there are elements~$m^a_{b,c}\in M$ such that~$\sum_{c\in \At(C\downarrow\gamma(a))} m^a_{b,c} \allowbreak =\mu_B(b)$ and~$\sum_{b\in \At(B\downarrow \beta(a))} m^a_{b,c}=\mu_C(c)$ for all~$b\in \At(B\downarrow \beta(a))$ and~$c\in \At(C\downarrow\gamma(a))$. 
		Since~$1_B=\beta(1_A)=\beta(\bigoplus_{a\in\At(A)}a)= \bigoplus_{a\in \At(A)}\beta(a)$ 
		and~$1_C=\gamma(1_A)=\bigoplus_{a\in \At(A)}\gamma(a)$, the set of atoms of~$B$ and~$C$ can be decomposed as 
		\[
		\At(B)=\bigsqcup_{a\in \At(A)}\At(B\downarrow \beta(a)) \andSep
		\At(C)=\bigsqcup_{a\in \At(A)}\At(C\downarrow\gamma(a)).
		\]
		Consider the finite Boolean algebra~$D$ uniquely determined by its atoms
		\[
		\At(D)=\bigsqcup_{a\in \At(A)}\{(b,c)\in \At(B\downarrow \beta(a))\times \At(C\downarrow\gamma(a))\colon m^a_{b,c}\neq 0\}.
		\]
		Define a measure~$\mu_D\colon D\to M$ over the atoms of~\(D\) as~$\mu_D(b,c)=m^a_{b,c}$. For~$a\in A,~b\in\At(B\downarrow \beta(a))$ and~$c\in\At(C\downarrow\gamma(a))$, set
		\begin{equation*}
			\beta_a'(b)=\bigoplus_{c\colon(b,c)\in \At(D)}(b,c)
			\andSep
			\gamma'(c)=\bigoplus_{b\colon(b,c)\in \At(D)}(b,c).
		\end{equation*}
		Setting~\(\beta'(b)\defeq\beta_a'(b)\) for the unique \(a\in B\) with~\(b\in B\downarrow\beta(a)\), we get an embedding~$\beta'\colon B\hookrightarrow D$ of Boolean algebras. It is measure-preserving since for each atom~$a\in \At(A)$ and each atom~$b\in \At(B\downarrow \beta(a))$, we have 
		\[
		\mu_D(\beta'(b))=\sum_{c\colon(b,c)\in \At(D)} m^a_{b,c}=\mu_B(b).
		\]
		Similarly, it can be shown that~$\gamma'$ is a measure-preserving embedding. Finally, for every atom~$a\in \At(A)$ we have
		\[
		\beta'\beta(a)=\bigoplus_{b\in \At(B\downarrow \beta(a))}\beta'(b)=\bigoplus_{(b,c)\in\At(D)}(b,c)
		=\bigoplus_{c\in\At(C\downarrow\gamma(a))}\gamma'(c)=\gamma'\gamma(a).
		\]
		Therefore~$\mathcal C$ has the amalgamation property.
	\end{proof}
	
	Let~$(B,\mu)$ be the \Fraisse limit of the class of finite~$(M,u)$-measured Boolean algebras.
	Since~$\mathcal{C}$ is the class of finitely generated substructures of~$(B,\mu)$, 
	and every element of~$B$ belongs to some finite subalgebra, it follows that~$\mu$ 
	is defined on the whole of~$B$ and is a measure. The following theorem says that~$\mu$ is a groupoid-induced V-measure.
	
	\begin{thm}\label{thm.Fraisseprop}
		Let~$(B,\mu)$ be the \Fraisse limit of~$\mathcal C$. Then~$\mu$ is a groupoid-induced V-measure. Moreover,
		if elements~\(a_1,\dots,a_n,b_1,\dots,b_n\in B\) satisfy~\(\oplus_{i=1}^n a_i=\oplus_{i=1}^n b_i\) and~$\mu(a_i)=\mu(b_i)$, then there is a measure-preserving automorphism~$g\in \Aut(B,\mu)$ such that~$g(a_i)=b_i$ for all~$i=1,\dots, n$. In particular, if~$M$ is cancellative, then~$\mu$ is group-induced.
	\end{thm}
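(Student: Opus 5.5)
The whole argument rests on the homogeneity and universality of the Fraïssé limit $(B,\mu)$ from Proposition \ref{prop.frlim}. The key step is the ``moreover'' clause, which I will prove first; the other claims then follow from it.

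\emph{The ``moreover'' clause.} Let $a_1,\dots,a_n,b_1,\dots,b_n$ satisfy $\oplus_i a_i=\oplus_i b_i=:c$ and $\mu(a_i)=\mu(b_i)$. Put $a_0=b_0=\neg c$. Let $A$ be the finite subalgebra of $B$ with atoms the nonzero $a_i$ ($0\le i\le n$), and let $A'$ be the finite subalgebra with atoms the nonzero $b_i$. Measures vanish only at $0$, so $a_i=0$ if and only if $b_i=0$. Hence $a_i\mapsto b_i$ defines an isomorphism of Boolean algebras $A\to A'$. It preserves $\mu$ because $\mu$ is additive over atoms. Thus it is an isomorphism of finitely generated substructures in the signature $L$, and by homogeneity (Theorem \ref{thm:Fraisse}\eqref{thm:Fraisse3}) it extends to an automorphism $g$ of $(B,\mu)$. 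Any $L$-automorphism preserves every relation $\bar m$, so $g\in\Aut(B,\mu)$ and $g(a_i)=b_i$.

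\emph{V-measure.} Suppose $\mu(c)=\bar a+\bar b$. If $c=0$, or if one of $\bar a,\bar b$ is $0$, the decomposition is trivial, so assume $0\neq c\neq 1$; the case $c=1$ is analogous. Take the substructure $A=\{0,c,\neg c,1\}$. Build a finite measured algebra $D$ with atoms $x,y,z$ and measures $\mu_D(x)=\bar a$, $\mu_D(y)=\bar b$, $\mu_D(z)=\mu(\neg c)$; drop $z$ if $c=1$. The total measure is $u$, so $D\in\mathcal C$. The map $c\mapsto x\lor y$ is an embedding $A\hookrightarrow D$. Since $D\in\mathcal C$, it embeds into $B$ by \eqref{thm:Fraisse2}. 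The image of $A$ under this embedding is isomorphic to $A$ via a measure-preserving map, so homogeneity gives an automorphism of $B$ moving that image back onto $A$. Composing, we get an embedding of $D$ into $B$ extending $A\hookrightarrow B$. The images of $x$ and $y$ then give the required decomposition $c=a\oplus b$. (Equivalently, this is the extension property of Fraïssé limits, which follows from \eqref{thm:Fraisse2} and \eqref{thm:Fraisse3}.)

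\emph{Groupoid-induced.} One direction is immediate: if $f\in\Inv(B,\mu)$ satisfies $f(a)=b$, then $\mu(a)=\mu(b)$. Conversely, suppose $\mu(a)=\mu(b)$. Consider a finite subalgebra containing $a$ and $b$, and refine $\mu$ on the four meets $a\land b$, $a\setminus b$, $b\setminus a$, $\neg(a\lor b)$. Since $M$ need not be cancellative, I cannot simply cancel $\mu(a\land b)$. Instead, apply the Vaught property already proved to $a$ and $b$ separately. Writing $\mu(a)=\mu(a\land b)+\mu(a\setminus b)$, split $b$ as $b=b'\oplus b''$ with $\mu(b')=\mu(a\land b)$ and $\mu(b'')=\mu(a\setminus b)$. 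The pieces $a\land b$ and $b'$ lie in the same ambient algebra but may overlap awkwardly. The cleanest fix is to work in $B\downarrow a$ and $B\downarrow b$ separately: choose an element $e$ disjoint from $a\lor b$ with $\mu(e)=\mu(a)$, if one exists, and route through it. Apply the moreover clause to the decomposition $(a\lor e)=a\oplus e=e\oplus a$ to get a global automorphism swapping $a$ and $e$, then do the same for $e$ and $b$. Composing, and restricting to $B\downarrow a$, gives the required $f$. When no such room exists, I would first cut $a$ and $b$ into smaller pieces via the V-property and handle the pieces one at a time.

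I expect this step to be the main obstacle. An alternative that avoids it is to build the partial isomorphism directly. Form a finite algebra $D'$ with atoms $a$, $b$ (declared disjoint and of equal measure) and rest $\neg(a\lor b)$, together with the map $a\mapsto b$; then use the extension property of the Fraïssé limit to realise the isomorphism of $B\downarrow a$ onto $B\downarrow b$ as a partial automorphism, via a back-and-forth over finite subalgebras of $B\downarrow a$ and $B\downarrow b$.

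\emph{Cancellative case.} If $M$ is cancellative and $\mu(a)=\mu(b)$, then $\mu(a\setminus b)=\mu(b\setminus a)$. Apply the moreover clause to $(a\setminus b)\oplus(b\setminus a)\oplus(a\land b)\oplus\neg(a\lor b)$, swapping the first two pieces and fixing the last two. This yields a single $g\in\Aut(B,\mu)$ with $g(a)=b$, so $\mu$ is group-induced with $n=1$.
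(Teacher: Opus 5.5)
Your proofs of the Vaught property, of the ``moreover'' clause and of the cancellative case are correct and essentially the paper's. In the cancellative case the paper uses $\mu(\neg a)=\mu(\neg b)$ instead of your four-piece partition; the two are equivalent.

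The gap is in the converse half of groupoid-inducedness, which is the substantive part of the theorem. Your main route cannot close it. Routing through an auxiliary $e$ and swapping via the ``moreover'' clause only ever moves pieces by \emph{global} automorphisms. So if the fallback ``cut $a$ and $b$ into pieces and treat them one at a time'' always worked, it would prove that $\mu$ is group-induced for every $M$ and $u$, and that is false. Let $M$ be the cone generated by $p,q$ subject to $p+q=p$, that is, $M=\{0\}\cup\{lq\colon l\geq 1\}\cup\{kp\colon k\geq 1\}$ with $kp+lq=kp$ for $k\geq 1$. One checks that $M$ is a countable refinement cone. Take $u=p$. Every finite $(M,p)$-measured algebra has exactly one atom of measure $p$, and all its other atoms have measures of the form $lq$. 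The Fra\"{\i}ss\'e limit is the finite--cofinite algebra on a countably infinite set $S$, with $\mu(F)=|F|q$ for finite nonempty $F$ and $\mu(F)=p$ for cofinite $F$. Hence $\Aut(B,\mu)$ is the full permutation group of $S$. Now let $a=S$ and $b=S\setminus\{s\}$. Any decompositions $a=\bigoplus_i a_i$, $b=\bigoplus_i b_i$ with $\mu(a_i)=\mu(b_i)$ must pair the unique cofinite pieces $a_1,b_1$, and satisfy $|a_i|=|b_i|$ for $i\geq 2$. Therefore $|S\setminus b_1|=|S\setminus a_1|+1$, and no automorphism carries $a_1$ onto $b_1$. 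Nevertheless $B\downarrow a\cong B\downarrow b$ via a bijection $S\to S\setminus\{s\}$, as the theorem asserts. So in this example no choice of pieces and auxiliary elements can work; note that there is no room at all here, since $a\lor b=1$.

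Your alternative, a back-and-forth between $B\downarrow a$ and $B\downarrow b$, is the right idea. The paper does exactly this in packaged form. It shows that $(B\downarrow a,\mu|_{B\downarrow a})$ is itself the Fra\"{\i}ss\'e limit of finite $(M,\mu(a))$-measured algebras. The age is identified via the V-property, and homogeneity comes from extending isomorphisms between finite subalgebras of $B\downarrow a$ by the identity on $\neg a$. It then invokes uniqueness of the limit.

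However, you have not carried this out, and the auxiliary algebra $D'$ does not make sense. You cannot declare $a$ and $b$ disjoint when $a\land b\neq 0$. Moreover, $\mu(a)+\mu(b)+\mu(\neg(a\lor b))$ is in general not $u$, so $D'\notin\mathcal C$.

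What you need is the one-step extension. Let $\varphi\colon A_1\to A_2$ be a $\mu$-preserving isomorphism between finite subalgebras of $B\downarrow a$ and $B\downarrow b$, with tops $a$ and $b$, and let $x\leq a$. For each atom $c$ of $A_1$, use the V-property you already proved to write $\varphi(c)=y_c\oplus y_c'$ with $\mu(y_c)=\mu(c\land x)$ and $\mu(y_c')=\mu(c\setminus x)$, skipping trivial splits. This extends $\varphi$ to the subalgebra generated by $A_1$ and $x$. Start from $a\mapsto b$ and alternate this step with the symmetric back step along enumerations of the countable sets $B\downarrow a$ and $B\downarrow b$. The union of these maps is the required $f\in\Inv(B,\mu)$ with $f(a)=b$.
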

	\begin{proof}
		Let~$a\in B$, and~$\mu(a)=m_1+m_2\in M$. Consider the Boolean subalgebra $A\defeq\{0, a,\neg a,1\}\subseteq B$, equipped with the measure~$\mu_A\defeq\mu|_A$. Let~$C\defeq2^{\{a_1,a_2,\neg a\}}$,
		and let~$\mu_C\colon C\to M$ be given by~$\mu_C(a_i)=m_i$ and~$\mu_C(\neg a)=\mu(\neg a)$.  Let~$f\colon A\hookrightarrow C$ be the measure-preserving embedding given by~$a\mapsto a_1\oplus a_2$ and~$\neg a\mapsto\neg a$. 
		Since~$(B,\mu)$ is the \Fraisse limit of~$\mathcal{C}$, there exists a 
		measure-preserving embedding~$\iota\colon (C,\mu_C)\hookrightarrow (B,\mu)$. 
		The map~$\iota f\colon A\to \iota f(A)$ is then an isomorphism between finitely generated substructures of $(B,\mu)$. By Theorem \ref{thm:Fraisse}(\ref{thm:Fraisse3}), it 
		extends to a measure-preserving automorphism of~$(B,\mu)$, which we
		denote by~$h$. Then~$a=h^{-1}(\iota(a_1))\oplus h^{-1}(\iota(a_2))$, and~$\mu(h^{-1}(\iota(a_i)))=m_i$, showing that~$\mu$ is a V-measure.
		
		In order to prove that~\(\mu\) is groupoid-induced, we show for all~$a\in B$ that~$(B\downarrow a,\mu|_{B\downarrow a})$ is the \Fraisse limit of finite~$(M,\mu(a))$-measured Boolean algebras. We start by verifying that whenever~$(C,\mu_C)$ is a finite ~$(M,\mu(a))$-measured Boolean algebra, it can be embedded in~$(B\downarrow a,\mu|_{B\downarrow a})$. Indeed, we have that~$\mu(a)=\mu_C(c_1)+\dots+\mu(c_n)$, where~$c_1,\dots,c_n$ are the atoms of~$C$, and by the Vaught property, there are~$a_1,\dots,a_n\in B\downarrow a$ such that~$\mu(a_i)=\mu(c_i)$ and~$a=a_1\oplus\dots\oplus a_n$. This shows that~$(C,\mu_C)$ embeds into~$(B\downarrow a,\mu|_{B\downarrow a})$. Moreover, homogeneity of~$(B\downarrow a,\mu|_{B\downarrow a})$ immediately follows from homogeneity of~$(B,\mu)$. If~$\mu(a)=\mu(b)$, then $(B\downarrow a, \mu|_{B\downarrow a})$ and $(B\downarrow b, \mu|_{B\downarrow b})$ are both \Fraisse limits of the class of finite 
		$(M,\mu(a))$-measured Boolean algebras. By uniqueness of the \Fraisse limit, 
		it follows that they are isomorphic, hence~$\mu$ is groupoid-induced.
		
		If~$\{a_1,\dots,a_n\}$ and~$\{b_1,\dots,b_n\}$ satisfy~$\mu(a_i)=\mu(b_i)$ and $\oplus_{i=1}^n a_i=\oplus_{i=1}^n b_i$, then there is an isomorphism between the Boolean subalgebras~\(\langle a_1,\dots, a_n\rangle,~\langle b_1,\dots, b_n\rangle\subseteq B\)
		sending~$a_i$ to~$b_i$ (and fixing $1\setminus\oplus_{i=1}^n a_i$). By homogeneity, this extends to an automorphism of~$(B,\mu)$.
		
		Assume now that~$M$ is cancellative. Let~$\{a_1,\dots,a_n\}$ and~$\{b_1,\dots,b_n\}$ be two families of pairwise disjoint elements of $B$ satisfying~$\mu(a_i)=\mu(b_i)$ for all $i$. Then $\mu(1\setminus\oplus_{i=1}^n a_i)=\mu(1\setminus\oplus_{i=1}^n b_i)$, hence the partitions of the unit~$\{a_1,\dots,a_n,1\setminus\oplus_{i=1}^n a_i\}$ and~$\{b_1,\dots,b_n,1\setminus\oplus_{i=1}^n b_i\}$ satisfy the condition of the previous paragraph. Therefore, there is an automorphism~$g\in \Aut(B,\mu)$ such that~$g(a_i)=b_i$ for all~$i$. This condition is readily stronger than~\(\mu\) being group-induced.
	\end{proof}
	
	Dobbertin showed that if~$B$ is a countable Boolean algebra and~$M$ is a refinement cone, then any two V-measures normalised at the same order unit are isomorphic~\cite[Theorem 4.6.8]{Wehrung:Monoids_Boolean}, in the sense that there is an automorphism of~$B$ that transforms one measure in the other. Therefore, when~$M$ is countable, every V-measure normalized at an order unit~$u \in M$ arises as a \Fraisse limit in the sense of Proposition \ref{prop.frlim}. 
	
	We recover two known results as a consequence of this observation and Theorem \ref{thm.Fraisseprop}. First, every V-measure defined on a countable Boolean algebra is groupoid-induced \cite[Theorem 4.6.8]{Wehrung:Monoids_Boolean}. Second, it is group-induced whenever~$M$ is cancellative \cite[Proposition 4.7.10 (c)]{Wehrung:Monoids_Boolean}). We point out that the cited proposition also shows that a groupoid-induced measure~$\mu$ normalized at~$u$ is group-induced as soon as there is an element~$u_0\in M$ and an integer~$m$ satisfying~$2u_0\leq u\leq mu_0$. The following observation is trivial, but very useful when dealing with non-cancellative monoids.
	
	\begin{rmk}\label{rmk.trivialgroupmeas}
		If~$M$ is a countable refinement cone and~$u_0$ is an order unit, then~$u=2u_0$ is an order unit that trivially satisfies the previous condition. Therefore, any groupoid-induced measure normalized at~$u$ is group-induced.
	\end{rmk}
	
	\begin{thm}\label{cor.Mistypesem}
		Let~$M$ be a countable refinement cone with an order unit~$u$ and let~\(B\) be a Boolean algebra. Let~$\mu\colon B\to M$ be a group-induced V-measure normalized at~$u$. Denote by~\(\Gr\) the transformation groupoid~\(\Aut(B,\mu)\ltimes\St(B)\).
		Then
		\[
		(M,u)\cong \big(S(\Gr),[\St(B)]\big).
		\]
	\end{thm}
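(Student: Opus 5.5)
The plan is to pass through Proposition \ref{prop:equivalence_of_types}, which identifies $S(\Gr)$ with $\Typ(\B)$. Here $\B$ is the Boolean inverse semigroup of compact open bisections of $\Gr=\Aut(B,\mu)\ltimes\St(B)$, and its idempotents $\OO$ are the clopens of $\St(B)$, hence $B$ itself by Stone duality. It therefore suffices to build an isomorphism $\Typ(\B)\to M$ sending $[\St(B)]=[1_B]$ to $u$.

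First, I define $\bar\mu\colon \OO/\D\to M$ by $\bar\mu([\Omega_a])=\mu(a)$. By Example \ref{ex:B/D_partial_monoid}, $\Omega_a\mathrel{\D}\Omega_b$ holds exactly when there is a compact open bisection $W$ with $s(W)=\Omega_a$ and $r(W)=\Omega_b$. Such a bisection is a partial homeomorphism that is piecewise in $\Aut(B,\mu)$, and by Proposition \ref{prop:iso_of_G-semigroups} it corresponds to a partial automorphism $f\in\Inv(B,\Aut(B,\mu))$ with $f(a)=b$. Decomposing $a=\oplus a_i$ with $f|_{a_i}=g_i\in\Aut(B,\mu)$ and using additivity gives $\mu(a)=\sum\mu(a_i)=\sum\mu(g_ia_i)=\mu(b)$, so $\bar\mu$ is well defined. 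It is additive on the partial cone, since a disjoint union of representatives is an orthogonal join in $B$. By the universal property of the enveloping monoid, $\bar\mu$ extends to a monoid homomorphism $\Phi\colon\Typ(\B)\to M$ with $\Phi([1_B])=\mu(1)=u$. It is conical because $\mu^{-1}\{0\}=\{0\}$.

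For surjectivity, let $m\in M$. Since $u$ is an order unit, $m\le nu$ for some $n$, so $m+c=nu=u+\dots+u$. Refinement in $M$ then splits $m=m_1+\dots+m_n$ with each $m_i\le u$, say $m_i+c_i=u$. The Vaught property applied to $\mu(1)=m_i+c_i$ gives $a_i\le 1$ with $\mu(a_i)=m_i$, and hence $\Phi(\sum[\Omega_{a_i}])=m$.

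Injectivity is the main step. Suppose $\Phi(\sum_{i}[\Omega_{a_i}])=\Phi(\sum_j[\Omega_{b_j}])$, that is, $\sum_i\mu(a_i)=\sum_j\mu(b_j)$ in $M$.

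\begin{enumerate}
\item Apply refinement in $M$ to this equality of finite sums to get elements $x_{ij}$ with $\sum_j x_{ij}=\mu(a_i)$ and $\sum_i x_{ij}=\mu(b_j)$.
\item Use the Vaught property repeatedly to split $a_i=\oplus_j a_{ij}$ with $\mu(a_{ij})=x_{ij}$, and similarly $b_j=\oplus_i b_{ij}$ with $\mu(b_{ij})=x_{ij}$.
\item By Lemma \ref{lem:sum_in_envelope}, or directly from the relations, $\sum_i[\Omega_{a_i}]=\sum_{i,j}[\Omega_{a_{ij}}]$ and likewise for the $b$'s. It remains to show $[\Omega_{a_{ij}}]=[\Omega_{b_{ij}}]$ in $\OO/\D$.
\item Since $\mu(a_{ij})=\mu(b_{ij})$, group-inducedness gives decompositions $a_{ij}=\oplus_k a'_k$ and $b_{ij}=\oplus_k b'_k$ together with $f_k\in\Aut(B,\mu)$ satisfying $f_k(a'_k)=b'_k$.
\item The bisection $W=\bigcup_k\{(f_k,p)\colon p\in\Omega_{a'_k}\}$ is then compact open, with source $\Omega_{a_{ij}}$ and range $\Omega_{b_{ij}}$, because the pieces have disjoint sources and disjoint ranges. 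This shows $\Omega_{a_{ij}}\mathrel{\D}\Omega_{b_{ij}}$.
\end{enumerate}

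I expect step 4, the passage from equality of measure values to $\D$-equivalence, to be the crux. It is exactly where the hypothesis that $\mu$ is group-induced, and not merely groupoid-induced, is needed, because the bisections of $\Gr$ only see pieces of actual automorphisms in $\Aut(B,\mu)$. Combining $\Phi$ with Proposition \ref{prop:equivalence_of_types} yields the isomorphism $(M,u)\cong(S(\Gr),[\St(B)])$. Since both cones carry the algebraic preorder, the monoid isomorphism is automatically an order isomorphism.
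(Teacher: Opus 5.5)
Your proof is correct, but it follows a different route from the paper.

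\textbf{The paper's route.} It does not build the isomorphism by hand. It quotes the proofs of Propositions 4.8.2(2) and 4.4.20 of \cite{Wehrung:Monoids_Boolean} to obtain $(M,u)\cong(\Typ(S),[\id_B])$ for $S=\Inv(B,\Aut(B,\mu))$. It then transports this along Proposition \ref{prop:iso_of_G-semigroups} and the identification $h\mapsto W_h$ of $\Inv(\St(B),\Aut(B,\mu))$ with the compact open bisections $\B$. Finally it applies Proposition \ref{prop:equivalence_of_types}.

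\textbf{Your route.} You define $\Phi\colon\Typ(\B)\to M$ directly from $\mu$ and check bijectivity yourself. Surjectivity comes from the order unit, refinement and the Vaught property. Injectivity comes from refinement, the Vaught property and group-inducedness, which you convert into an explicit compact open bisection. In effect you reprove the part of Wehrung's theory needed here.

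\textbf{Comparison.} Your argument is self-contained and shows where each hypothesis enters: the order unit only in surjectivity, group-inducedness only in injectivity. It also needs only the easy facts about bisections of the transformation groupoid, with $\Aut(B,\mu)$ taken discrete. Every compact open bisection is a finite union of pieces $\{g_i\}\times\Omega_{a_i}$ with disjoint sources and ranges, and conversely such unions are bisections. You do not need the full isomorphism $\Inv(\St(B),\Aut(B,\mu))\cong\B$, which the paper asserts without detail. The paper's route is shorter and places the result inside Wehrung's framework, at the cost of citing the needed results only through their proofs.

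The one ingredient you use without comment is refinement for equalities of sums with more than two terms. It follows from the $2\times 2$ version by the standard induction.
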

	\begin{proof}
		The ~\((M,u)\)-valued measure ~\(\mu\) is group-measurable. We denote by~$S$ the Boolean inverse semigroup~$\Inv(B,\Aut(B,\mu))$ with composition operation (see Example \ref{ex:Type_Inv}).
		From the proofs of Proposition 4.8.2 (2) and Proposition 4.4.20 of \cite{Wehrung:Monoids_Boolean}, we get that
		\((M,u)\) is isomorphic to~\((\Typ(S),[\id_B])\) as pointed cones. 
		
		Proposition \ref{prop:iso_of_G-semigroups} shows that the inverse semigroups~\(S\) and~\(\Inv(\St(B),\Aut(B,\mu))\) are isomorphic.
		Given any~\(h\in\Inv(\St(B),\Aut(B,\mu)))\) with partition~\(\Omega_a=\bigcup_{i=1}^n\Omega_{a_i}\) and group elements~\(\{g_i\}_{i=1}^n\), define a compact open bisection
		\[
		W_h=\bigcup_{i=1}^n\{(g_i,p)\colon p\in\Omega_{a_i}\}.
		\]
		The assignment~\(h\mapsto W_h\) defines an isomorphism between~\(\Inv(\St(B),\Aut(B,\mu))\) and the Boolean inverse semigroup~\(\B\) of compact open bisections of the transformation groupoid~\(\Gr\defeq\Aut(B,\mu)\ltimes\St(B)\) (see Example \ref{ex:B_is_Boolean}).
		So the cones~\(\Typ(S) \) and~\(\Typ(\B)\) are isomorphic.
		The latter is isomorphic to~\(S(\Gr)\), the type semigroup of~\(\Gr\), by Proposition \ref{prop:equivalence_of_types}.
		The unit~\([\id_B]\) of~\(\Typ(S)\) is mapped to the equivalence class of~\(\St(B)\) under this isomorphism.
	\end{proof}
	
	This dynamical interpretation of group-induced measures allows to get more information on the \Fraisse limit obtained in Proposition \ref{prop.frlim}. 
	
	\begin{prop}\label{prop.finiteoratomless}
		Let~$M$ be a countable refinement cone with an order unit~$u_0$ and set ~$u=2u_0$. Let~$\mathcal{C}$ be the class of~$(M,u)$-measured finite Boolean algebras, and let~$(B,\mu)$ be the \Fraisse limit of~$\mathcal C$. Suppose~$M$ is simple. Then either~$(M,u)\cong(\N,n)$ for some $n\geq 1$ and~$B$ is the finite Boolean algebra with~$n$ atoms, or~$B$ is the Cantor algebra.
	\end{prop}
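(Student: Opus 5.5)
We use that $B$ is countable (property (1) of Theorem \ref{thm:Fraisse}) and that $\mu$ is a V-measure (Theorem \ref{thm.Fraisseprop}). The proof splits according to whether $M$ has an atom, in the sense of cones.

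\textbf{First case: $M$ has no atoms.} We show that $B$ is atomless. Let $b\in B$ be nonzero. Then $\mu(b)\neq 0$, and since $\mu(b)$ is not an atom of $M$ we can write $\mu(b)=m_1+m_2$ with $m_1,m_2\neq 0$. The Vaught property gives $b=b_1\oplus b_2$ with $\mu(b_i)=m_i$. Since $\mu^{-1}\{0\}=\{0\}$, both $b_i$ are nonzero, so $b$ is not an atom of $B$. Thus $B$ is countable and atomless. It is infinite, because an atomless algebra with more than one element is infinite and $\mu(1)=u\neq0$ forces $1\neq 0$. By Tarski's theorem, $B$ is the Cantor algebra.

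\textbf{Second case: $M$ has an atom $e$.} We aim to show $M=\N e$ with $e$ playing the role of $1$.

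Take any nonzero $x\in M$. Simplicity gives $e\leq x$, since $x$ is an order unit and $e\le kx$ for some $k$, and then refinement gives $e\le x$ because $e$ is an atom. We should double-check this step: from $e+c=kx=x+\dots+x$, refinement splits $e$ into pieces below the copies of $x$. As $e$ is an atom, all but one piece is zero, so $e\le x$.

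Now iterate. Write $x=e+x_1$. If $x_1\ne0$, write $x_1=e+x_2$, and so on. We need this process to terminate, and we also need every element to be a multiple of $e$; the latter requires the $x_i$ themselves to be of this form.

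\textbf{Main obstacle: termination.} Since $x\le nu_0\le n'e$ for some $n'$ (because $e$ is an order unit), we have $x+c=n'e$. Refinement of $x+c$ against the sum $e+\dots+e$ expresses $x$ as a sum of pieces below $e$. Each such piece is $0$ or $e$, since $e$ is an atom. Hence $x=je$ with $j\le n'$, so $M=\N e$.

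The cone $M$ could still fail to be $\N$ if $je=(j+1)e$ for some $j$. In that case $e$ would be paradoxical, while we also know $u=2u_0$. To handle this, use the Fraïssé limit. The element $e$ is realised by some $b\in B$, and $b$ is an atom of $B$, since otherwise the Vaught argument above would split $e$. Then $u=ne$ gives $1=b_1\oplus\dots\oplus b_n$ with each $\mu(b_i)=e$, so $B$ is finite.

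Any finite measured algebra with measure in $\mathcal C$ embeds into $B$. For $\mu(1)=ne=(n+1)e$, $B$ would contain a partition into $n+1$ atoms. But an atom of $B$ has measure an atom of $M$, i.e.\ $e$, and any two partitions of $1$ into atoms coincide. Hence the numbers of atoms agree, so $ne\ne(n+1)e$. The same reasoning with $me=m'e\le u$ (embed disjoint $m$ and $m'$ atoms inside decompositions of $u$) gives injectivity of $j\mapsto je$ up to $n$, and for general $j$ via $je\le ku$, applying the same argument to $B\downarrow$-copies.

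Therefore $(M,u)\cong(\N,n)$ and $B$ is the finite Boolean algebra with $n$ atoms. Here $n$ is even, but we only need $n\ge1$.
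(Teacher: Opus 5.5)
\textbf{Gap: injectivity of $j\mapsto je$ above $u$.} Your Case 1 is fine, and so is the reduction $M=\mathbb N e$ in Case 2 (refining $x+c=n'e$ against the atom $e$). The gap is that you do not prove $j\mapsto je$ is injective for $j>n$, where $u=ne$. Your Fra\"iss\'e argument only rules out relations $me=m'e$ with $m<m'$ and $m\le n$. In that case $u=ne=(n-m+m')e$, so the finite Boolean algebra with $n-m+m'$ atoms, each of measure $e$, lies in $\mathcal C$. It then embeds into $B$, contradicting uniqueness of the partition of $1$ into atoms.

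If instead $n<m<m'$, the interval $[0,u]=\{0,e,\dots,ne\}$ with its partial addition is exactly that of $(\mathbb N,n)$. So $\mathcal C$ and $(B,\mu)$ are literally the same structures as for $(\mathbb N,n)$, and no argument carried out inside $(B,\mu)$ can detect the relation. Passing to $B\downarrow a$ only shrinks the range of measures you can see, so ``applying the same argument to $B\downarrow$-copies'' does not close the gap. Some property of $M$ above $u$ must be used. For example, $\mathbb N/(10\sim 11)$ agrees with $\mathbb N$ on $[0,2]$, and it is excluded only because it fails refinement.

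\textbf{The fix.} Use the refinement trick you already used for termination. Suppose $me=m'e$, and refine $e+\dots+e$ ($m$ terms) $=e+\dots+e$ ($m'$ terms), using the standard extension of $2\times 2$ refinement to $m\times m'$ sums. Every row and every column of the refinement matrix sums to the atom $e$. By conicality, each row and column therefore has exactly one nonzero entry, and that entry equals $e$. The nonzero entries thus form a bijection between the $m$ rows and the $m'$ columns, so $m=m'$. After this, the Fra\"iss\'e limit is needed only to see that $1$ splits into $n$ atoms of measure $e$, so $B$ is the finite algebra with $n$ atoms.

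\textbf{Comparison with the paper.} The repaired proof is a genuinely different route. The paper realises $M$ as the type semigroup of $\Aut(B,\mu)\acts\St(B)$ via Theorem~\ref{cor.Mistypesem}, which needs group-inducedness and hence $u=2u_0$. It then uses simplicity $\Leftrightarrow$ minimality, and shows that an atom of $B$ gives an isolated point and hence a finite transitive system. Your argument is purely algebraic and splits on atoms of $M$ rather than of $B$. It uses only the Vaught property and refinement, so it works for any nonzero $u$.
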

	
	\begin{proof}
		By Theorem \ref{thm.Fraisseprop},~$\mu$ is groupoid-induced and by the choice of the order unit and Remark \ref{rmk.trivialgroupmeas}, it is also group-induced. By Theorem \ref{cor.Mistypesem}, the cone~$M$ is the type semigroup of the action~$\Aut(B,\mu)\acts \St(B)$. Simplicity of the type semigroup is equivalent to minimality of the action by \cite[Lemma 2.2]{Ara_Bonicke_Bosa_Li:type_semigroup}, hence the action on~$\St(B)$ is minimal. Suppose~$B$ has an atom. Then~$\St(B)$ has an isolated point and, by minimality, the action is transitive and all points are isolated. Compactness forces~$\St(B)$ to be finite. Therefore~$(M,u)$ is the type semigroup of a transitive action on a finite set, that is,~$(M,u)\cong(\N, n)$, where~$n$ is the number of elements of~$\St(B)$, which is equal to the number of atoms of~$B$. Otherwise,~$B$ has no atoms, so it is the Cantor algebra.
	\end{proof}
	
	
	\section{Actions lacking dynamical comparison} \label{sec.minnocomp}
	In this section, we construct topologically free minimal actions of~$\mathbb F_\infty$ without dynamical comparison, and we study additional dynamical properties of such actions. 
	
	Given a Boolean algebra~\(B\) admitting a V-measure~\(\mu\), the group~\(\Aut(B,\mu)\) can be equipped with the subspace topology coming from the product topology on~\(B^B\).
	More explicitly, a basic neighborhood of~$g\in\Aut(B,\mu)$ is 
	\[
	U(g; a_1\dots a_n)\defeq\{f\in\Aut(B,\mu)\colon f(a_i)=g(a_i)\quad\text{for } i=1,\dots n\},
	\]
	where~$a_i\in B$. This topology makes~$\Aut(B,\mu)$ a Polish group, that is, a separable completely metrisable topological group. 
	
	
	\subsection{Minimal actions}
	
	We showed in Section \ref{sec.constructionmonoids} the existence of countable~$2$-divisible simple refinement cones that are not almost unperforated. Moreover,  we constructed the \Fraisse limit of the class of finite Boolean algebras with normalised measures in such cones in Section \ref{sec.FrLim}, and showed that it consists of the Cantor algebra equipped with a group-induced measure.
	These structures will help us construct minimal dynamical systems which lack comparison.
	
	\begin{thm}\label{thm.minactnocomp}
		Let \(M\) be a countable simple refinement cone which is not almost unperforated, let~$u_0\in M$ be a nonzero element and~$u=2u_0$.
		Let moreover~$(B,\mu)$ be the \Fraisse limit of finite~$(M,u)$-measured Boolean algebras.
		The canonical action of any countable dense subgroup~\(H\leq \Aut(B,\mu)\) on~\(\St(B)\)
		is minimal and lacks dynamical comparison.
		
		In particular, there exists a countable group that acts minimally and without comparison on the Cantor space.
	\end{thm}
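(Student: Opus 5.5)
The plan is to show that the type semigroup of $H\acts\St(B)$ coincides with that of the full group $\Aut(B,\mu)\acts\St(B)$, which by Theorem \ref{cor.Mistypesem} is $(M,u)$. Minimality and failure of comparison then follow from the cited characterisations: minimality is equivalent to simplicity of the type semigroup \cite[Lemma 2.2]{Ara_Bonicke_Bosa_Li:type_semigroup}, and for minimal actions dynamical comparison is equivalent to almost unperforation \cite[Lemma 3.5]{Ara_Bonicke_Bosa_Li:type_semigroup}.

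First I record the setup. By Theorem \ref{thm.Fraisseprop} and Remark \ref{rmk.trivialgroupmeas}, $\mu$ is a group-induced V-measure normalised at $u$. Since $M$ is simple and not almost unperforated, $M\not\cong\N$ (the cone $\N$ is almost unperforated), so Proposition \ref{prop.finiteoratomless} shows that $B$ is the Cantor algebra. Hence $\St(B)$ is the Cantor space, $\Aut(B,\mu)$ is Polish, and countable dense subgroups exist by separability.

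The key step is to show that for $a,b\in B$ and $g\in\Aut(B,\mu)$ with $g(a)=b$, there is $h\in H$ with $h(a)=b$. This is immediate from density: the basic neighbourhood $U(g;a)$ is open and nonempty, so it meets $H$. Consequently every element of $\Inv(B,\Aut(B,\mu))$ lies in $\Inv(B,H)$. Indeed, each piece $g_i$ acting on $a_i$ can be replaced by some $h_i\in H$ that agrees with $g_i$ on the single element $a_i$, and hence on all of $B\downarrow a_i$. To see this last point, note that agreement on elements below $a_i$ is needed, so I would apply density to the finite subalgebra of $B\downarrow a_i$ generated by the pieces. Strictly, one needs $h_i(x)=g_i(x)$ for all $x\leq a_i$, which is not a finite condition. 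To handle this, I would instead compare equivalence relations. The relation $\sim_\Gr$ in Definition \ref{defn:type_semigroup_Ara} only involves finitely many clopens $U_i$ and their images $g_iU_i$, and $h_i$ with $h_i(U_i)=g_i(U_i)$ suffices, since the sums $\sum 1_{h_iU_i}=\sum 1_{g_iU_i}$ agree. Thus $\sim_{H\ltimes X}$ and $\sim_{\Aut(B,\mu)\ltimes X}$ coincide on $\Contc(X,\N_0)$, the inclusion $H\le\Aut(B,\mu)$ giving one direction trivially. Therefore $S(H\ltimes\St(B))=S(\Aut(B,\mu)\ltimes\St(B))\cong M$.

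With this identification, $M$ simple gives minimality of $H\acts\St(B)$, and the failure of almost unperforation gives failure of dynamical comparison. Here I need the cited equivalence for minimal ample groupoids; the groupoid $H\ltimes\St(B)$ is second countable since $H$ is countable and $\St(B)$ is metrisable. If states are needed explicitly, Remark \ref{rmk:states=measures} translates $\mu(U)<\mu(V)$ for all invariant measures into the corresponding state condition. For the final assertion, I apply the result to a cone from Theorem \ref{thm.constructionP} or Proposition \ref{thm.constructionQ} and to any countable dense subgroup $H$.

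I expect the main obstacle to be the careful appeal to the equivalence between comparison and almost unperforation, including the precise form (strict inequality of states, with the cone possibly lacking states). The density step itself is routine once it is phrased through $\sim_\Gr$ rather than through full partial automorphisms.
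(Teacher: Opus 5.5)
Your proof is correct, but it is organised differently from the paper's.

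You identify the type semigroup of $H\acts\St(B)$ outright. Every basic neighbourhood $U(g_i;U_i)$ meets $H$, so each piece $g_i$ of a witness of $\sim$ for $\Aut(B,\mu)\ltimes\St(B)$ can be replaced by some $h_i\in H$ with $h_iU_i=g_iU_i$. Hence the two relations on $\Contc(\St(B),\N_0)$ coincide and $S(H\ltimes\St(B))\cong M$. Minimality and failure of comparison then follow by applying the results of \cite{Ara_Bonicke_Bosa_Li:type_semigroup} once, to the minimal second countable groupoid $H\ltimes\St(B)$.

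The paper never computes the type semigroup of $H$. Its route is:
\begin{enumerate}
\item It applies \cite[Lemma 3.5]{Ara_Bonicke_Bosa_Li:type_semigroup} to the groupoid of the uncountable group $\Aut(B,\mu)$, which only yields an amplification $\Aut(B,\mu)\times S_n\acts\St(B)\times\{1,\dots,n\}$ without comparison.
\item It shows by pointwise approximation that $H\times S_n$ and $\Aut(B,\mu)\times S_n$ have the same invariant probability measures. So the same pair of clopens witnesses the failure for $H\times S_n$, since subequivalence through $H$ would give subequivalence through the larger group.
\item It descends from the amplification back to $H$ using \cite[Proposition 3.10]{Ara_Bonicke_Bosa_Li:type_semigroup} and countability of $H$.
\item It treats minimality separately, by a density argument on $H$-invariant clopen elements.
\end{enumerate}

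Your route has several advantages:
\begin{itemize}
\item It is shorter, and needs the cited comparison/unperforation equivalence only for the second countable groupoid $H\ltimes\St(B)$, with no detour through amplifications or measures.
\item It gets minimality directly from simplicity. This sidesteps the paper's reduction of minimality to the absence of nontrivial invariant clopen elements, a reduction that needs more than Stone duality: a zero-dimensional system can have no nontrivial invariant clopen sets without being minimal.
\item It yields the stronger fact $S(H\ltimes\St(B))\cong M$. From this, the coincidence of invariant measures that the paper reuses later also follows via Remark~\ref{rmk:states=measures}.
\end{itemize}

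One thing to clean up: the intermediate claim you abandon, $\Inv(B,\Aut(B,\mu))\subseteq\Inv(B,H)$, is not merely unproven but false here. $\Inv(B,H)$ is countable, while the nondiscrete Polish group $\Aut(B,\mu)$ is not. It should simply be deleted; your final argument does not depend on it.
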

	
	\begin{proof}
		By Remark \ref{rmk.trivialgroupmeas} and Theorem \ref{cor.Mistypesem}, the type semigroup of the action~$\Aut(B,\mu)\acts \St(B)$ is~$M$, hence it is simple and not almost unperforated.
		Simplicity of the type semigroup is equivalent to minimality of the action \cite[Lemma 2.2]{Ara_Bonicke_Bosa_Li:type_semigroup}.
		Since the type semigroup is not almost unperforated, there exists some~\(n\geq 1\) such that the n-fold amplified system~\(\Aut(B,\mu)\times S_n\acts\St(B)\times\{1,\dots n\}\) lacks dynamical comparison (see \cite[Lemma 3.5]{Ara_Bonicke_Bosa_Li:type_semigroup}). Here,~\(S_n\) denotes the symmetric group on~\(n\) elements.
		
		By Stone duality, minimality of~$H\acts \St(B)$ is equivalent to the absence of nontrivial fixed points in the associated action~$H\acts B$. Suppose there is an element~$b\in B\setminus\{0,1\}$ which is fixed by every element of~$H$. Then for any~$g\in \Aut(B,\mu)$, we can pick a sequence~$(h_n)_n$ in~$H$ which is pointwise convergent to~$g$. Then~$gb$ is eventually equal to~$h_nb=b$, contradicting minimality of~$\Aut(B,\mu)\acts \St(B)$. 
		
		In view of \cite[Proposition 3.10]{Ara_Bonicke_Bosa_Li:type_semigroup} and countability of~$H$, the action~$H\acts \St(B)$ lacks comparison
		if and only if some amplification of the system also lacks comparison. The natural candidate is~\(H\times S_n\acts\St(B)\times\{1,\dots,n\}\).
		It suffices to show that the actions of~$\Aut(B,\mu)\times S_n$ and~$H\times S_n$ have the same invariant probability measures,
		since then any two clopen subsets of~$\St(B)\times\{1,\dots,n\}$ that witness the failure of comparison for the action of~\(\Aut(B,\mu)\times S_n\)
		also witness the failure of comparison for the action of~\(H\times S_n\).
		To this end, it is immediate that every~$\Aut(B,\mu)\times S_n$-invariant measure is also~$H\times S_n$-invariant. 
		Let $\nu$ be an~$H\times S_n$-invariant probability measure.
		By the pointwise convergence topology on~$\Aut(B,\mu)\times S_n$, given any~$g\in \Aut(B,\mu)\times S_n$ there is a sequence~$(h_m)_m$ in~$H\times S_n$ that converges to~$g$ pointwise on clopen subsets.
		So, given any clopen subset~\(K\subseteq\St(B)\times \{1,\dots,n\}\), we get~\(\nu(K)=\nu(h_mK)\to\nu(gK)\) showing that~$\nu$ is~$H\times S_n$-invariant.
		
		Cones satisfying the assumptions of the theorem were constructed in Section~\ref{sec.constructionmonoids}. Since such cones are not almost unperforated, they are not isomorphic to~$\N$. So~$\St(B)$ is the Cantor space by Proposition~\ref{prop.finiteoratomless}.
	\end{proof}
	
	
	\subsection{Topologically free minimal actions}
	We now show that dynamical comparison can also fail for topologically free minimal actions.
	
	\begin{defn}
		An action~$G\acts X$ is \emph{topologically free} if for every~$g\in G\setminus\{e\}$, the set~$\mathrm{Fix}(g)\defeq\{x\in X\colon gx=x\}$ of fixed points has empty interior.
	\end{defn} 
	
	\begin{defn}
		Let~\(G\) be a Polish group and let~\(P\) be a property that a countable subgroup of~\(G\) could satisfy.
		We say that the property~$P$ is \emph{generic} among countable subgroups of~\(G\) if
		\[
		\{(g_n)_{n\in\N}\in G^\N\colon \langle g_n\colon n\in\N\rangle\leq G \text{ has property }P\}
		\]
		is a dense~$G_\delta$ subset of~$G^\N$. Equivalently, we say that a generic countable subgroup of~$G$ has the property~$P$.	
	\end{defn}
	
	To obtain additional dynamical properties, we will employ Baire category arguments.	We will now show that a generic countable subgroup of~$\Aut(B,\mu)$ is dense.
	
	\begin{lem}\label{lem.dense}
		Let~$Y$ be a Polish space. The set 
		\[
		D(Y)\defeq\{(y_n)_{n\in \N}\colon \{y_n\colon n\in\N\} \text{ is dense in }Y\}\]
		is a dense~$G_\delta$ subset of~$Y^\N$.
	\end{lem}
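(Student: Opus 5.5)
We prove the lemma by writing $D(Y)$ as a countable intersection of open dense sets and appealing to the Baire category theorem for the Polish space $Y^\N$. The empty space is trivial, so assume $Y\neq\emptyset$. Since $Y$ is Polish it is second countable; fix a countable base $\{V_k\colon k\in\N\}$ of nonempty open sets. A sequence $(y_n)_n$ has dense range if and only if every $V_k$ contains some $y_n$. Hence
\[
D(Y)=\bigcap_{k\in\N}\ \bigcup_{n\in\N} O_{k,n},\qquad O_{k,n}\defeq\{(y_m)_m\in Y^\N\colon y_n\in V_k\}.
\]
Each $O_{k,n}=\pi_n^{-1}(V_k)$ is open in the product topology because the coordinate projection $\pi_n$ is continuous. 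So each $U_k\defeq\bigcup_n O_{k,n}$ is open, and $D(Y)$ is a $G_\delta$ set.

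Next we show that each $U_k$ is dense. A basic open set of $Y^\N$ has the form $W=W_0\times\dots\times W_N\times Y\times Y\times\cdots$ with $W_i\subseteq Y$ nonempty open. Pick $y_i\in W_i$ for $i\le N$, set $y_{N+1}\in V_k$, and choose the remaining coordinates arbitrarily. The resulting sequence lies in $W\cap O_{k,N+1}\subseteq W\cap U_k$.

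Finally, $Y^\N$ is Polish as a countable product of Polish spaces. By the Baire category theorem, the countable intersection $D(Y)=\bigcap_k U_k$ of open dense sets is dense. Hence $D(Y)$ is a dense $G_\delta$ subset of $Y^\N$.

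The only points needing care are the second countability of $Y$, which gives a countable base, and the fact that $Y^\N$ is Polish, which is needed for Baire; neither is a genuine obstacle. The density step is immediate because a basic open set of the product constrains only finitely many coordinates, leaving a free coordinate to place in $V_k$.
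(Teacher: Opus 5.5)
Your proposal is correct and follows the paper's proof essentially step for step: you write $D(Y)=\bigcap_k\bigcup_n\pi_n^{-1}(V_k)$ over a countable base, check that each union is open and dense because a basic product neighbourhood constrains only finitely many coordinates, and conclude with the Baire category theorem on the Polish space $Y^\N$. Requiring the base to consist of nonempty sets is a small improvement, since the paper's argument tacitly needs this for each $D_k$ to be dense.
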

	\begin{proof}
		Since~\(Y\) is separable and metrisable, it admits a countable basis, say~$\{U_k\colon k\in \N\}$. 
		Let~\(\pi_n\colon Y^\N\to Y\) be the projection map of the \(n^{\text{th}}\) coordinate.
		The set 
		\[
		D_{k}\defeq \{(y_n)_n\colon \exists n_0~\text{with}~y_{n_0}\in U_k\}=\bigcup_{n\in \N}\pi_n\inv(U_k)
		\]
		is open, being the union of open sets.
		The set~\(D(Y)\) is~\(G_\delta\) since it can be written as 
		\[D(Y)=\bigcap_{k\in\N}D_k.\]
		It remains to show that each~$D_k$ is dense, 
		since that would show density of~\(D(Y)\) by Baire category theorem.
		To this end, consider the basic open set 
		\[
		\emptyset\neq V= W_{1}\times W_{2}\dots\times W_{m}\times Y\times\dots ;
		\]
		and take points~$y_i\in W_{i}$. Let moreover~$y\in U_k$. 
		Then 
		\[
		(y_1,y_2,\dots y_m,y,y,\dots)\in V\cap D_k.
		\]
		Hence~$V\cap D_k\neq\emptyset$, showing that~$D_k$ is dense.
	\end{proof}
	
	Consider the group $\mathbb F_\infty=\langle x_1,x_2,\dots\rangle$. Given a word~$w=x_{i_1}^{\epsilon_1}x_{i_2}^{\epsilon_2}\dots x_{i_n}^{\epsilon_n}\in\mathbb{F}_\infty$, and a sequence~$(g_i)_{i\in \N}$ of elements of a topological group~$G$, we denote by~$w((g_i)_{i\in\N})$ the element~$g_{i_1}^{\epsilon_1}g_{i_2}^{\epsilon_2}\dots g_{i_n}^{\epsilon_n}\in G$. Seen this way, the word $w$ is a function from $G^\N$ to $G$ which is continuous since the group-operations are continuous on $G$. 
	
	\begin{nota}
		Throughout this subsection, we denote by~\(M\) a countable $2$-divisible simple refinement cone with a fixed a nonzero element~\(u\). The \Fraisse limit of the class of finite $(M,u)$-measured Boolean algebras will be denoted by~$(B,\mu)$. We set $G=\Aut(B,\mu)$ and $X=\St(B)$.
		Given a nontrivial word $w\in \mathbb F_\infty$ and a nonzero element $c\in B$, set
		\[
		F_{w,c}\defeq\{(g_n)_n\in G^\N\colon w((g_n)_n)|_{ c}\neq \id|_{c}\}
		\]
		and 
		\[
		W\defeq \{w\in \mathbb F_\infty\colon F_{w,c} \text{ is dense for all } c\in B\setminus \{0\}\}.
		\]
	\end{nota}

	The sets~\(F_{w,c}\) are open and in Proposition \ref{thm.generictopfreenessnew} we will show they are dense. That is, we will show that~\(W=\Finf\setminus\{e\}\).
	We start with some preliminary observations. Firstly,~$W$ is closed under taking inverses. Indeed if $w((g_n)_n)$ is not the identity on $c$, then $w^{-1}((g_n)_n)$ is not the identity on $c$ either.
	Secondly, every nonempty word in $\mathbb F_\infty$ can be written in reduced form $x_{n_1}^{\epsilon_1}x_{n_2}^{\epsilon_2}\dots x_{n_l}^{\epsilon_l}$, where $\epsilon_l$ is either $1$ or $-1$ and if two consecutive letters are the same, then so are the exponents. We say that $l$ is the length of $w$. For each word $w\in \mathbb F_\infty$ and elements $a,b\in B$, 
	the set $U=\{(g_n)_n\in G^\N\colon w((g_n)_n)a=b\}$ is open, being the preimage of the open set $\{g\in G\colon ga=b\}\subseteq G$ under $w$.
	
	\begin{rmk}\label{rmk:preliminary}
		Suppose $g\in G$ is not the identity on a clopen $c$. Then there is $x\in c$ with $g(x)\neq x$. Let $a$ and $b$ be disjoint clopen neighbourhoods of $x$ and $g(x)$ respectively. Then $d=a\land c\land g^{-1}b$ is a nonempty neighbourhood of $x$ satisfying $gd\land d=0$. In particular, if $(g_n)_n\in F_{w,c}$, then there is $0<d\leq c$ such that $w((g_n)_n)d\land d=0$.		
	\end{rmk}
	
	\begin{lem}\label{lem.Wclosedbyconj}
		If $w\in W$, then for all $k\in\N$ the word $x_kwx_k\inv$ is in $W$.
	\end{lem}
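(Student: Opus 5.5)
Assume $w\in W$, $k\in\N$ and $c\in B\setminus\{0\}$, and let $v\defeq x_kwx_k\inv$. We must show that $F_{v,c}$ is dense. So fix a nonempty basic open set
\[
V=\{(g_n)_n\colon g_n\in U(h_n;a_1,\dots,a_m)\text{ for } n\le N\}\subseteq G^\N,
\]
and find a point of $V$ in $F_{v,c}$. The identity $v((g_n)_n)=g_k\,w((g_n)_n)\,g_k\inv$ gives
\[
v((g_n)_n)|_c\neq\id|_c \iff w((g_n)_n)|_{g_k\inv c}\neq \id|_{g_k\inv c}.
\]
Hence it suffices to produce $(g_n)_n\in V$ such that $w((g_n)_n)$ moves some element below $g_k\inv c$.

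\textbf{Case $k>N$.} The coordinate $g_k$ is unconstrained by $V$. If $x_k$ does not occur in $w$, then $w((g_n)_n)$ does not depend on $g_k$. Pick $(g_n)_n\in V\cap F_{w,c}$, which exists since $w\in W$. Then $w((g_n)_n)$ moves some $0<d\le c$, and we can set $g_k\defeq\id$, so that $g_k\inv c=c$. If $x_k$ does occur in $w$, enlarge $N$ to exceed $k$ and put $g_k$ into the constrained part. This reduces to the next case, so it is cleaner to treat everything uniformly there.

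\textbf{General case.} First observe that $F_{w,c'}$ is dense and open for every nonzero $c'$. Refine the finite data of $V$ as follows. Let $A$ be the finite Boolean subalgebra generated by the $a_i$ and their images $h_n(a_i)$, $h_n\inv(a_i)$ for $n\le N$. Shrinking $V$ is harmless, so we may assume $V$ fixes the values of $g_k$ and $g_k\inv$ on $A$ together with $c$. Since $g_k$ is constrained on the generators, $g_k\inv c$ is determined on $V$, say $g_k\inv c=c'\neq 0$ for every $(g_n)_n\in V$. This holds once $c$ is one of the fixed elements in the constraint for $g_k\inv$, i.e.\ once we intersect $V$ with the open set $\{(g_n)_n\colon g_k\inv c=c'\}$, where $c'\defeq h_k\inv c$. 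That set is open, and it is nonempty inside $V$ because it contains the tuple $(h_n)_n$ extended arbitrarily.

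With this shrinking, $V'\defeq V\cap\{g_k\inv c=c'\}$ is a nonempty open set. Density of $F_{w,c'}$ then yields $(g_n)_n\in V'\cap F_{w,c'}$. For this tuple, $w((g_n)_n)$ is not the identity on $c'=g_k\inv c$. Therefore $v((g_n)_n)$ is not the identity on $c$, that is, $(g_n)_n\in V\cap F_{v,c}$.

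The main point to check is that the condition fixing $g_k\inv c$ is open and compatible with $V$. It is open because inversion is continuous and evaluation at $c$ is locally constant in the pointwise topology. It is compatible with $V$ because $(h_n)_n$ (padded with arbitrary entries) lies in both sets. Since $W$ quantifies over every nonzero $c'$, no further case analysis on whether $x_k$ occurs in $w$ is needed, and the argument closes.
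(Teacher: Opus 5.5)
Your argument is correct and is essentially the paper's proof. You intersect $V$ with the open set $\{(g_n)_n\colon g_k^{-1}c=h_k^{-1}c\}$ for a chosen $(h_n)_n\in V$, pick a point of this nonempty open set inside the dense open set $F_{w,h_k^{-1}c}$, and transfer non-triviality through $v((g_n)_n)=g_k\,w((g_n)_n)\,g_k^{-1}$. The case $k>N$ and the auxiliary subalgebra $A$ are superfluous, and your claim that $g_k^{-1}c$ is already determined by the constraints on $A$ only holds after the intersection you perform next. Openness of that extra condition is immediate, since $g_k^{-1}c=c'$ is just the basic condition $g_k(c')=c$.
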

	\begin{proof}
		Let $c\in B\setminus\{0\}$ and $V\subseteq G^\N$ be a nonempty open subset. We show that $F_{x_kwx_k\inv,c}\cap V\neq \emptyset$. Let $(h_n)_n\in V$ and consider the open set 
		\[
		V'=V\cap \{(g_n)_n\in G^\N\colon g_k\inv c=h_k\inv c\}.
		\]
		The set $V'$ is nonempty since it contains $(h_n)_n$, therefore $V'$ intersects the dense open set $F_{w,h_k\inv c}$. Let $(g_n)_n$ be an element in this intersection. This is an element of $V$ that satisfies $w((g_n)_n)|_{h_k\inv c}\neq \id|_{h_k\inv c}$. Since $(g_n)_n\in V'$, we have that $h_k\inv c=g_k\inv c$, hence $w((g_n)_n)|_{g_k\inv c}\neq \id|_{ g_k\inv c}$. This is equivalent to $\big((x_kwx_k\inv)((g_n)_n)\big)|_{ c}\neq \id|_{ c}$. Therefore $V\cap F_{x_kwx_k\inv,c}\neq \emptyset$.
	\end{proof}
	
	The following lemma is the technical core of the proof of Proposition \ref{thm.generictopfreenessnew}.
	
	\begin{lem}\label{lem:simplifytopfreeness}
		Let~\(k\in\N\),~\(w\) be a word in $\Finf$,~\(c\in B\) be a nonzero element, ~\(V\subseteq\ G^\N\) be a nonempty open subset, and~\((g_n)_n\) be an element of $V$ satisfying $g_kh|_{c}=\id|_{c}$, where $h=w((g_n)_n)$.
		Then there exist~\(g'\in G\) and nonzero disjoint elements $d_1,d_2\leq hc$ such that
		\begin{enumerate}
			\item $(g_1,\dots,g_{k-1},g',g_{k+1},\dots)\in V$;
			\item $g'|_{B\downarrow 1\setminus hc}=g_k|_{B\downarrow 1\setminus hc}$ and $(g')\inv|_{B\downarrow 1\setminus c}=g_k\inv|_{B\downarrow 1\setminus c}$;
			\item $g'(d_1)=g_k(d_2)$ and $g'(d_2)=g_k(d_1)$.
		\end{enumerate}
	\end{lem}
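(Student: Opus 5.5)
Since $g_kh$ is the identity on $c$, we have $g_k(hc)=c$. Hence $g_k$ restricts to an isomorphism $B\downarrow hc\to B\downarrow c$, and it also maps $B\downarrow 1\setminus hc$ onto $B\downarrow 1\setminus c$. We will obtain $g'$ by modifying $g_k$ only on $B\downarrow hc$: we compose $g_k$ there with a measure-preserving swap of two small pieces of $hc$. Concretely, set $g'\defeq g_k\circ\sigma$, where $\sigma\in G$ is an automorphism that fixes everything below $1\setminus hc$ and exchanges $d_1$ and $d_2$. Then $g'$ agrees with $g_k$ on $B\downarrow 1\setminus hc$. Since $\sigma$ preserves $hc$ and $g'(hc)=c$, the inverse $(g')^{-1}$ agrees with $g_k^{-1}$ on $B\downarrow 1\setminus c$. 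We also get $g'(d_1)=g_k(\sigma d_1)=g_k(d_2)$ and, symmetrically, $g'(d_2)=g_k(d_1)$. So (2) and (3) hold once $\sigma$ exists.

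To construct $\sigma$, we use the basic neighbourhood structure. Since $V$ is open, there are finitely many elements $a_1,\dots,a_m\in B$ and an index bound such that any sequence agreeing with $(g_n)_n$ in the relevant coordinates, and whose $k$-th entry agrees with $g_k$ on $a_1,\dots,a_m$, lies in $V$. Let $A$ be the finite Boolean subalgebra generated by $a_1,\dots,a_m$ and $hc$. Pick an atom $e$ of $A$ with $e\leq hc$; one exists because $hc\neq0$, as $h$ is an automorphism and $c\neq 0$. Since $M$ is $2$-divisible, write $\mu(e)=m+m$. The Vaught property (Theorem~\ref{thm.Fraisseprop}) then gives $e=d_1\oplus d_2$ with $\mu(d_1)=\mu(d_2)=m$, and $m\neq 0$ because $e\neq 0$. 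Now apply the second assertion of Theorem~\ref{thm.Fraisseprop} to the partition $\{d_1,d_2,1\setminus e\}$ and its rearrangement $\{d_2,d_1,1\setminus e\}$. This yields $\sigma\in\Aut(B,\mu)$ with $\sigma(d_1)=d_2$, $\sigma(d_2)=d_1$ and $\sigma(1\setminus e)=1\setminus e$.

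The main obstacle is that $\sigma$ should act as the identity below $1\setminus e$, not merely preserve $1\setminus e$ as a set. Only then do (2) and (1) follow. We avoid this by extending a carefully chosen partial isomorphism. Let $A'$ be the subalgebra generated by $A$, $d_1$ and $d_2$. Define $\phi$ on $A'$ by swapping $d_1$ and $d_2$ and fixing every atom of $A$ other than $e$. This is a measure-preserving isomorphism between finite substructures, so homogeneity (Theorem~\ref{thm:Fraisse}(\ref{thm:Fraisse3})) extends it to some $\sigma_0\in G$. However, $\sigma_0$ still only fixes atoms of $A$ as sets. To fix $B\downarrow 1\setminus e$ pointwise, we build $\sigma$ by hand: set $\sigma=\id$ on $B\downarrow 1\setminus e$ and $\sigma=\sigma_0$ on $B\downarrow e$. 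Because $\sigma_0(e)=e$, these two pieces glue to an automorphism of $B$, and it preserves $\mu$ since both pieces do and $\mu$ is additive.

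It remains to check (1). The new $k$-th entry $g'=g_k\sigma$ must agree with $g_k$ on each $a_i$. Each $a_i$ is a join of atoms of $A$, and $\sigma$ fixes every atom of $A$: it is the identity on those below $1\setminus e$, and it maps $e$ to $e$. Hence $\sigma(a_i)=a_i$, so $g'(a_i)=g_k(a_i)$ and the modified sequence lies in $V$. Finally, $d_1$ and $d_2$ are nonzero, disjoint and below $e\leq hc$, as required.
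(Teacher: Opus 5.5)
Your proof is correct and is essentially the paper's argument. Like the paper, you take an atom of the finite subalgebra generated by $hc$ and the $a_j$'s lying below $hc$, split it into two pieces of equal measure using $2$-divisibility and the Vaught property, realise the swap by homogeneity, and glue with $g_k$ off that atom. The only difference is that you write the result as $g_k\circ\sigma$, whereas the paper extends the twisted map $g''$ directly; this is equivalent. Your special choice of $\phi$ fixing the other atoms of $A$ is redundant: gluing with the identity on $B\downarrow 1\setminus e$ already works with the $\sigma$ you first obtained from Theorem~\ref{thm.Fraisseprop}.
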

	\begin{proof}
		Without loss of generality, we can assume that there exist a positive integer $I\geq k$ and elements $a_1,\dots, a_m\in B$ such that $V$ has the form $V=\prod_{i\in \N}U_i$, where ~\(U_i=\{h\in G\colon h(a_j)=g_i(a_j)\text{ for }j=1,\dots,m\}\) for $i\leq I$, and $U_i=G$ for $i>I$.
		
		Consider the subalgebra 
		\[
		B_0\defeq\langle c, h(c), a_1,\dots, a_m\rangle\subseteq B
		\]
		and enumerate its atoms as~$\At(B_0)=\{b_0,b_1,\dots b_N\}$. We can assume without loss of generality that~$b_0\leq h(c)$, and hence~$g_k(b_0)\leq g_kh(c)=c$. Since~$M$ is~$2$-divisible and~$\mu$ is a~$V$-measure, there is a partition ~$b_0=d_1\oplus d_2$ such that~$\mu(d_1)=\mu(d_2)$. Consider now the subalgebra~$B_1$ with atoms
		\[
		\At(B_1)=\{d_1,d_2,b_1,\dots, b_N\}.
		\]
		Then clearly~$B_0\subseteq B_1$. Define the partial automorphism~$g''$ with domain~$B_1$ given by~$g''(b_i)=g_k(b_i)$ for~$i=1,\dots, N$,~$g''(d_1)=g_k(d_2)$, $g''(d_2)=g_k(d_1)$. Then~$g''$ is a measure-preserving partial automorphism between finitely generated substructures of $(B,\mu)$, hence by the homogeneity property (Theorem \ref{thm:Fraisse} (\ref{thm:Fraisse3})) it can be extended to an automorphism of~\(B\), still denoted by~$g''$. Since $g_kb_0=g''b_0$, we can define a measure preserving automorphism $g'\in G$ by setting it equal to $g''$ on $B\downarrow b_0$ and equal to $g_k$ on $B\downarrow1\setminus b_0$. Since $b_0\leq hc$, we have that $g'|_{ B\downarrow 1\setminus hc}=g_k|_{B\downarrow 1\setminus hc}$ and $(g')\inv|_{B\downarrow 1\setminus c}=g_k\inv|_{B\downarrow 1\setminus c}$. Since $g'|_{B_0}=g_k|_{B_0}$ and all the elements $a_j\in B_0$, we have that $g'\in U_k$, hence $(g_1,\dots,g_{k-1},g',g_{k+1},\dots)\in V$. Finally, the conditions $g'(d_1)=g_k(d_2)$ and $g'(d_2)=g_k(d_1)$ are true by construction.	
	\end{proof}
	
	\begin{prop}\label{thm.generictopfreenessnew}
		For any~\(w\in\Finf\setminus\{e\}\) and~\(c\in B\setminus\{0\}\), the set
		\[
		F_{w,c}=\{(g_n)_n\in \Aut(B,\mu)^\N\colon w((g_n)_n)|_c\neq\id|_c\}
		\]
		is open and dense in~\(\Aut(B,\mu)^\N\).
	\end{prop}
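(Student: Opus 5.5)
Openness was already noted: $F_{w,c}$ is the complement of $\bigcap_{a\le c}\{(g_n)_n\colon w((g_n)_n)a=a\}$, and in fact $w((g_n)_n)|_c\neq\id|_c$ is witnessed by a single $a\le c$ with $w((g_n)_n)a\neq a$, which is an open condition since $\{g\colon ga\ne a\}$ is open (its complement is open in the discrete-valued pointwise topology, and it is also closed). So the content is density, i.e.\ showing $W=\Finf\setminus\{e\}$. I would argue by induction on the length of the reduced word $w$. I would use that $W$ is closed under inverses and under conjugation by generators (Lemma~\ref{lem.Wclosedbyconj}), and that cyclically reducing a word is achieved by conjugation. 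Thus it suffices to treat cyclically reduced words, and after inverting and conjugating I may assume $w=x_k v$ with $v$ of length $l-1$, with $v$ not ending in $x_k^{-1}$.

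For the base case $w=x_k$, take a basic open $V$ and $(g_n)_n\in V$ with $g_k|_c=\id|_c$. Apply Lemma~\ref{lem:simplifytopfreeness} with the empty word, so $h=\id$. This produces $g'$, agreeing with $g_k$ on the relevant finite data, that swaps $d_1,d_2\le c$ relative to $g_k$. Then $g'd_1=g_kd_2=d_2\neq d_1$, so the modified sequence lies in $V\cap F_{x_k,c}$.

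For the inductive step, fix $V$ and $(g_n)_n\in V$. If $w((g_n)_n)|_c\ne\id|_c$ we are done. Otherwise $g_kh|_c=\id|_c$, where $h=v((g_n)_n)$. Before applying Lemma~\ref{lem:simplifytopfreeness}, I would use the induction hypothesis on suitable subwords of $v$ (and of $w$ with the letter $x_k$ deleted) to shrink $c$ and perturb $(g_n)_n$ inside $V$, so that the path of $c$ under the successive prefixes of $v$ behaves generically. The key requirement is that the intermediate images of a small $0<c'\le c$ under the letters of $v$ that are occurrences of $x_k^{\pm1}$ avoid $hc'$ and $c'$. Remark~\ref{rmk:preliminary} gives pieces $d$ with $ud\wedge d=0$ for the finitely many relevant subwords $u\in W$, and intersecting these yields such a $c'$. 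Now apply Lemma~\ref{lem:simplifytopfreeness} to $c'$, obtaining $g'$. Because of clause (2), replacing $g_k$ by $g'$ does not change the action of any inner occurrence of $x_k^{\pm1}$ along the orbit of $c'$ (these occur off $hc'$, respectively off $c'$ for inverses). Hence $v$ still maps $d_i\le c'$ (after pulling back) the same way, and the outer letter now acts by $g'$. This gives $w'$ mapping $h^{-1}d_1$ to $g_kd_2$, which differs from $g_kh h^{-1}d_1=g_kd_1$. So $w'$ is not the identity on $c'\subseteq c$.

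The main obstacle is precisely this bookkeeping: ensuring that the inner occurrences of $x_k^{\pm1}$ in $v$ never touch $hc'$ (for $x_k$) or $c'$ (for $x_k^{-1}$), so that the modification from Lemma~\ref{lem:simplifytopfreeness} only affects the outermost letter. I would handle it by noting that each such coincidence is expressed by a nontrivial proper subword $u$ (the cyclic reducedness of $w$ guarantees nontriviality) fixing a piece of $c'$. By induction $F_{u,\cdot}$ is dense open, so finitely many such conditions can be achieved simultaneously inside $V$, followed by passing to a small $c'$ via Remark~\ref{rmk:preliminary}.
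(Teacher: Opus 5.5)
Your architecture is the paper's: the reduction to $w=x_kv$ with $v$ not ending in $x_k^{-1}$; the base case via Lemma~\ref{lem:simplifytopfreeness} with the empty word; and arranging that inner occurrences of $x_k^{\pm1}$ avoid $hc'$ (resp.\ $c'$), so that clause (2) of that lemma confines the change to the outer letter. However, the mechanism you give for the step you call the main obstacle fails as written. Suppose you put $(g_n)_n$ in $V\cap\bigcap_u F_{u,c}$ and then take, for each relevant $u$, a piece $0<d_u\le c$ with $u((g_n)_n)d_u\wedge d_u=0$ from Remark~\ref{rmk:preliminary}. You cannot then ``intersect these'': the $d_u$ can be pairwise disjoint (one word may move only one half of $c$, another only the other half), so their meet may be $0$. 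The conditions have to be produced one at a time, with shrinking elements. Having $c_1\le c$ and $(g^{(1)}_n)_n\in V$ satisfying the first disjointness, restrict to the nonempty open set of sequences in $V$ for which that disjointness still holds at $c_1$. Then apply density of the next $F_{u,\cdot}$ at $c_1$ rather than at $c$, get $c_2\le c_1$, and so on. Each disjointness condition passes to smaller elements, so all of them hold at the final $c'$. This is what the paper's recursion $(V_j,c_j,(g^{(j)}_n)_n)$ with conditions (i)--(v) accomplishes.

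A second issue is what the conditions actually say. For an inner occurrence $v=px_ks$ of $x_k$ you need $s((g_n)_n)c'\wedge hc'=0$. That is, the subword $px_k$ must move the \emph{image} $s((g_n)_n)c'$, not $c'$, off itself. Read as a condition on $c'$, it concerns the conjugate $s^{-1}px_ks$, which is not a subword and can be longer than $w$, so the induction hypothesis does not apply to it directly. (For an inverse occurrence $v=px_k^{-1}s$ your description is accurate: the condition $s((g_n)_n)c'\wedge c'=0$ involves the suffix $s$, which is nontrivial because $v$ does not end in $x_k^{-1}$.) The paper handles this by also freezing the images $f((g_n)_n)c_j$ for all $f$ in the finite set $F$ of subwords $w_{i,j}$, which is again an open condition. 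The induction hypothesis for $w_{1,k+1-j}x_1$ can then be applied at the fixed element $w_{k+2-j,k+1}((g^{(j)}_n)_n)c_j$, and the resulting piece pulled back to $c_{j+1}\le c_j$. Alternatively, you could observe that $W$ is closed under conjugation by arbitrary elements of $\Finf$: use Lemma~\ref{lem.Wclosedbyconj} together with the identical argument for $x_k^{-1}wx_k$, freezing $g_kc$. With these two repairs, the rest of your argument matches the paper's. That rest is the inductive check that $v$ acts unchanged on $B\downarrow c'$ after replacing $g_k$ by $g'$, followed by evaluating $w$ at $h^{-1}d_1=g_kd_1\le c'$.
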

	\begin{proof}
		Set $G=\Aut(B,\mu)$. The fact that $F_{w,c}$ is open follows from writing it as the following union of open sets  
		\[
		F_{w,c}=\bigcup_{0<b\leq c}\{(g_n)_n\in G^\N\colon w((g_n)_n)b\land b= 0\}.
		\]
		
		We prove density by induction on the length~\(l\geq 1\) of a nontrivial word~\(w\in\Finf\).
		If $l=1$, then $w$ is either $x_k$ or $x_k\inv$ for some $k\in\N$. Since $W$ is closed under taking inverses we can assume $w=x_k$. Let $V$ be a nonempty open subset of $G^\N$, we show that $V\cap F_{x_k,c}$ is nonempty. Let $(g_n)_n\in V$, and suppose $g_k|_{ c}=\id|_{ c}$, as otherwise we would be done. Then the hypothesis of Lemma \ref{lem:simplifytopfreeness} are satisfied for the empty word. Therefore there is $g'\in G$, and nonzero disjoint elements $d_1,d_2\leq c$ such that $(g_1,\dots,g_{k-1},g',g_{k+1},\dots)\in V$, $g'(d_1)=d_2$, and $g'(d_2)=d_1$. This shows that $g'|_{c}\neq \id|_{c}$, hence $(g_1,\dots,g_{k-1},g',g_{k+1},\dots)\in V\cap F_{x_k,c}$.
		
		Suppose now $l\geq 2$ and let $w=x_{n_1}^{\epsilon_1}x_{n_2}^{\epsilon_2}\dots x_{n_l}^{\epsilon_l}$.  Using the fact that $W$ is closed under taking inverses, together with Lemma \ref{lem.Wclosedbyconj}, we can also assume $\epsilon_1=1$ and $x_{n_l}^{\epsilon_l}\neq x_{n_1}\inv$. Moreover, up to rearranging the variables we can assume $n_1=1$. This means that $w$ can be written in the following form:
		\[
		w= x_1w_1x_1^{\epsilon_1}w_2x_1^{\epsilon_2}\dots w_k x_1^{\epsilon_k}w_{k+1},
		\]
		for some~\(k\), where the words $w_j$ do not contain $x_1$ and either $\epsilon_k=1$ or $w_{k+1}$ is nonempty. In what follows, we set 
		\[
		w_{i,j}\defeq
		\begin{cases}
			w_ix_1^{\epsilon_i}\dots w_j &\text{ for }1\leq i<j\leq k+1,\\
			w_j &\text{ for }i=j
		\end{cases}
		\]
		In particular, $w=x_1w_{1,k+1}$. We show that $w\in W$ by showing that~$V\cap F_{w,c}$ is nonempty for any
		$c\in B\setminus\{0\}$ and nonempty open set $V\subseteq G^\N$.
		
		For $\epsilon\in\{-1,1\}$ and $j=1,\dots, k$, consider the function $e_j^\epsilon\colon G^\N\times B\to B$ defined by
		\[
		\begin{split}
			e_j^1((g_n)_n;b)&=w_{k+2-j,k+1}((g_n)_n)b\land w_{1,k+1}((g_n)_n)b,\\
			e_j\inv((g_n)_n;b)&=w_{k+2-j,k+1}((g_n)_n)b\land b.
		\end{split}
		\]
		Let $F$ be the (finite) subset of $\mathbb F_\infty$ consisting of all the words $w_{i,j}$ for $1\leq i\leq j\leq k+1$.  
		Set $V_0=V$ and $c_0=c$. We recursively construct triples $(V_j, c_j, (g_n^{(j)})_n)$ with $j=1,\dots,k$ satisfying certain conditions to eventually show that~\(\emptyset\neq V_k\cap F_{w,c_k}\subseteq V\cap F_{w,c}\). The conditions are
		\begin{enumerate}[label=(\roman*)]
			\item\label{cond.i} $V_{j+1}\subseteq V_j$ for $j=0,\dots,k-1$ and they are open sets;
			\item $0<c_{j+1}\leq c_j$ for $j=0,\dots,k-1$;
			\item $(g^{(j)}_n)_n\in V_j$ for $j=1,\dots,k$;
			\item $f((g_n)_n)c_{j}=f((g^{(j)}_n)_n)c_{j}$ for all $f\in F$, $j=1,\dots,k-1$, and all~\((g_n)_n\in V_{j+1}\);
			\item\label{cond.v} $e^{\epsilon_{k+1-j}}_j((g^{(j)}_n)_n;c_j)=0$ for $j=1,\dots,k$.
		\end{enumerate}
		
		We start by constructing $(V_1,c_1, (g^{(1)}_n)_n)$. The construction depends on the value of $\epsilon_k$. Suppose first that $\epsilon_k=1$. Let $(g_n^{(0)})_n\in V$. Consider the nonempty open set 
		\[
		V_1\defeq V\cap \{(g_n)_n\colon f((g_n)_n)c=f((g_n^{(0)})_n)c\text{ for all $f\in F$}\}\cap F_{w_{1,k}x_1,w_{k+1}((g^{(0)}_n)_n)c},
		\]
		where to claim nonemptiness we are using the fact that $w_{1,k}x_1\in W$, true by the induction hypothesis.
		
		Let $(g_n^{(1)})_n$ be an element of $V_1$. Then 
		\[
		(w_{1,k}((g_n^{(1)})_n)g^{(1)}_1)|_{w_{k+1}((g_n^{(0)})_n)c}\neq \id|_{ w_{k+1}((g_n^{(0)})_0)c}, 
		\]
		which by construction of $V_1$ is equivalent to
		\[
		(w_{1,k}((g_n^{(1)})_n)g^{(1)}_1)|_{w_{k+1}((g_n^{(1)})_n)c}\neq \id|_{w_{k+1}((g_n^{(1)})_nc}. 
		\]
		Therefore, by Remark \ref{rmk:preliminary}, there is $0<c_1\leq c$ such that 
		\[
		w_{1,k}((g_n^{(1)})_n)g^{(1)}_1w_{k+1}((g_n^{(1)})_n)c_1\land w_{k+1}((g_n^{1})_n)c_1=0,
		\]
		that is, $e_1^1((g^{(1)}_n)_n;c_1)=0$.
		
		Suppose instead $\epsilon_k=-1$. 
		The word~\(w_{k+1}\) is nonempty and strictly shorter than~$w$ by the construction of~$w$. Hence~$w_{k+1}\in W$.
		Let $V_1\defeq V\cap F_{w_{k+1},c}$, and let $(g_n^{(1)})_n\in V_1$. Then
		\[
		w_{k+1}(g_n^{(1)})|_{c}\neq \id|_{c} 
		\]
		and we can find as before an element $0<c_1\leq c$ such that $w_{k+1}((g_n^{(1)})_n)c_1\land c_1=0$, that is, $e_1\inv((g^{(1)}_n)_n;c_1)=0$.
		
		Suppose now that $(V_j,c_j,(g^{(j)}_n)_n)$ has been constructed. 
		We now move to the construction of the triple at the stage~\(j+1\) which,
		as before, will depend on the value of $\epsilon_{k-j}$. 
		Suppose $\epsilon_{k-j}=1$.
		The case $\epsilon_{k-j}=-1$ can be handled analogously and is therefore omitted.
		Similarly to the definition of~\(V_1\), define
		\[
		\begin{split}
			V_{j+1}\defeq V_j\cap &\{(g_n)_n\colon f((g_n)_n)c_j=f((g^{(j)}_n)_n)c_j\text{ for all $f\in F$}\} \\
			&\cap F_{w_{1,k+1-j}x_1,w_{k+2-j,k+1}((g^{(j)}_n)_n)c_j}.
		\end{split}
		\]
		The nonemptiness of~\(V_{j+1}\) follows from the induction hypothesis applied to the word $w_{1,k+1-j}x_1$. Consider~$(g_n^{(j+1)})_n\in V_{j+1}$. Then
		\[
		(w_{1,k+1-j}((g_n^{(j+1)})_n)g^{(j+1)}_1)|_{w_{k+2-j,k+1}((g_n^{(j)})_n)c_j}\neq \id|_{w_{k+2-j,k+1}((g_n^{(j)})_n)c_j}, 
		\]
		which by construction of $V_{j+1}$ is equivalent to 
		\[
		(w_{1,k+1-j}((g_n^{(j+1)})_n)g^{(j+1)}_1)|_{w_{k+2-j,k+1}((g_n^{(j+1)})_n)c_j}\neq \id|_{w_{k+2-j,k+1}((g_n^{(j+1)})_n)c_j}. 
		\]
		Arguing as before, we obtain $0<c_{j+1}\leq c_j$ such that $e_{j+1}^1((g_n^{(j+1)})_n;c_{j+1})=0$. It is easy to check that the conditions \ref{cond.i}--\ref{cond.v} are satisfied by the triple  $(V_{j+1},c_{j+1},(g^{(j+1)}_n)_n)$.
		
		The conditions \ref{cond.i}--\ref{cond.v} also give that $e^{\epsilon_{k+1-j}}_j((g^{(k)}_n)_n;c_k)=0$ for~\(j=1,\dots,k\). Indeed, this follows by construction when $j=k$, while for $j<k$, we have
		\[
		e^{\epsilon_{k+1-j}}_j((g^{(k)}_n)_n;c_k)\leq  e^{\epsilon_{k+1-j}}_j((g^{(k)}_n)_n;c_j)= e^{\epsilon_{k+1-j}}_j((g^{(j)}_n)_n;c_j)=0,
		\]
		where in the inequality we used the fact that $c_k\leq c_j$ and in the equality we used that $(g^{(k)}_n)_n\in V_{j+1}$ and that all the words appearing in $e_j^{\epsilon_{k+1-j}}$ are in $F$.
		
		We are now ready to prove that $V\cap F_{w,c}\neq\emptyset$. In fact, we shall show the stronger condition $V_k\cap F_{w,c_k}\neq\emptyset$. To simplify the notation, we denote by $(g_n)_n$ the sequence $(g^{(k)}_n)_n\in V_k$. Moreover, let $V'=V_k$ and $c'=c_k$. If $w((g_n)_n)|_{c'}\neq \id|_{c'}$, then $(g_n)_n\in V\cap F_{w,c'}$ and we are done, so assume $w((g_n)_n)|_{c'}= \id|_{c'}$. Then the hypotheses of Lemma \ref{lem:simplifytopfreeness} are satisfied for the word $w_{1,k+1}$. Therefore, there exist $g'\in G$ and nonzero disjoint $d_1,d_2\leq hc'$ such that
		
		\begin{enumerate}
			\item $(g',g_2,\dots)\in V'$;
			\item\label{cond.2} $g'|_{B\downarrow 1\setminus hc'}=g_1|_{B\downarrow 1\setminus hc'}$ and $(g')\inv|_{B\downarrow 1\setminus c'}=g_1\inv|_{B\downarrow 1\setminus c'}$;
			\item $g'(d_1)=g_1(d_2)$ and $g'(d_2)=g_1(d_1)$;
		\end{enumerate}
		where $h=w_{1,k+1}((g_n)_n)$.
		
		We claim that~$w_{1,k+1}(g_1,g_2,\dots)|_{c'}=w_{1,k+1}(g',g_2,\dots)|_{c'}$. To this end, we show by induction on $j=0,\dots,k$ that $w_{k+1-j,k+1}(g_1,g_2,\dots)|_{c'}=w_{k+1-j,k+1}(g',g_2,\dots)|_{c'}$. The case $j=0$ follows from the fact that $x_1$ does not occur in $w_{k+1}$. 
		We will now prove that the claim holds for~\(j+1\) supposing that it holds for~\(j\).
		We have
		\[
		w_{k-j,k+1}((g_n)_n)|_{B\downarrow c'}=w_{k-j}((g_n)_n)g_1^{\epsilon_{k-j}}w_{k+1-j,k+1}((g_n))|_{B\downarrow c'}.
		\]
		Since $e_{j+1}^{\epsilon_{k-j}}((g_n)_n;c')=0$, it follows that either
		$w_{k+1-j,k+1}((g_n))c'\leq 1\setminus hc'$ or~$w_{k+1-j,k+1}((g_n))c'\leq 1\setminus c'$ depending on the value of~\(\epsilon_{k-j}\). 
		Combined with the inductive hypothesis and condition (\ref{cond.2}), this gives 
		\[
		g_1^{\epsilon_{k-j}}w_{k+1-j,k+1}((g_n))|_{B\downarrow c'}=(g')^{\epsilon_{k-j}}w_{k+1-j,k+1}((g',g_2,\dots))|_{B\downarrow c'}.
		\]
		Since $x_1$ does not appear in $w_{k-j}$, we finally have
		\[
		w_{k-j,k+1}(g_1,g_2,\dots)|_{B\downarrow c'}=w_{k-j,k+1}(g',g_2,\dots)|_{B\downarrow c'},
		\]
		as desired.
		
		We now prove that $w(g',g_2,\dots)|_{c'}\neq \id|_{c'}$, showing that $(g',g_2,\dots)\in V'\cap F_{w,c'}$. Indeed, for the element $g_1(d_1)\leq c'$, we have
		\[
		\begin{split}
			w(g',g_2,\dots)(g_1(d_1))&=g'w_{1,k+1}(g',g_2,\dots)(g_1(d_1))\\
			&=g'w_{1,k+1}(g_1,g_2,\dots)(g_1(d_1))\\
			&=g'h(g_1(d_1))=g'(d_1)=g_1(d_2)\neq g_1(d_1).
		\end{split}
		\]
		This shows that  $w\in W$, concluding the proof.
	\end{proof}
	
	Combining Proposition \ref{thm.generictopfreenessnew} together with Baire category theorem we obtain:
	
	\begin{cor}\label{thm.generictopfreeness}
		The set 
		\begin{align*}
			F=\bigcap_{w\in \mathbb F_\infty\setminus\{e\}}\bigcap_{c\in B\setminus\{0\}}F_{w,c}
		\end{align*}
		is a dense~$G_\delta$ subset of~\(\Aut(B,\mu)^\N\). 
	\end{cor}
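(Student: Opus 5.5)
The plan is to apply the Baire category theorem in the Polish space \(\Aut(B,\mu)^\N\), using Proposition \ref{thm.generictopfreenessnew}.

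First, I will check that \(\Aut(B,\mu)^\N\) is Polish. The group \(\Aut(B,\mu)\) is a Polish group, as recalled at the start of this section. A countable product of Polish spaces is Polish, and so it is a Baire space.

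Second, I will check that the intersection defining \(F\) is countable. The free group \(\mathbb F_\infty\) is countable, since it consists of finite words in countably many letters. The Boolean algebra \(B\) is countable, being the \Fraisse limit of Theorem \ref{thm:Fraisse}(\ref{thm:Fraisse1}). Hence the index set \((\mathbb F_\infty\setminus\{e\})\times(B\setminus\{0\})\) is countable.

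Third, I will invoke Proposition \ref{thm.generictopfreenessnew}, which says each \(F_{w,c}\) is open and dense. It follows that \(F\) is a countable intersection of open sets, hence \(G_\delta\). The Baire category theorem then gives that \(F\) is dense, since a countable intersection of dense open subsets of a completely metrisable space is dense.

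There is no real obstacle at this stage. All the work sits in Proposition \ref{thm.generictopfreenessnew}, and the only points to verify are the countability of \(B\) and the Polishness of \(\Aut(B,\mu)\) with its pointwise convergence topology. Both are already available in the paper.
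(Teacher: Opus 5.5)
Your proposal is correct and is essentially the paper's argument: the corollary is obtained by combining Proposition~\ref{thm.generictopfreenessnew} (each $F_{w,c}$ is open and dense) with the Baire category theorem in the Polish space $\Aut(B,\mu)^\N$. Your explicit checks are the details the paper leaves implicit, namely that the index set is countable because $\mathbb F_\infty$ and the \Fraisse limit $B$ are countable, and that a countable product of Polish spaces is Polish.
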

	
	\begin{thm}\label{thm.genericnocomp}
		Let \(M\) be a countable $2$-divisible simple refinement cone which is not almost unperforated, and~$u\in M$ be a nonzero element.
		Let~$(B,\mu)$ be the \Fraisse limit of finite~$(M,u)$-measured Boolean algebras.
		A generic countable subgroup of~\(\Aut(B,\mu)\) is dense, isomorphic to~\(\Finf\), and acts topologically freely on~\(\St(B)\).
		
		In particular, there exist topologically free minimal actions of~$\mathbb F_\infty$ on the Cantor space without dynamical comparison.
	\end{thm}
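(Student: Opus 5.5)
The plan is to intersect three dense $G_\delta$ subsets of $G^\N$, where $G=\Aut(B,\mu)$ is Polish (so $G^\N$ is Polish and the Baire category theorem applies), and then read off the conclusion from Theorem \ref{thm.minactnocomp}.

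The first set is $D(G)$ from Lemma \ref{lem.dense}, which is a dense $G_\delta$. For every $(g_n)_n\in D(G)$, the subgroup $\langle g_n\colon n\in\N\rangle$ contains a dense set, so it is dense.

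The second set is the set $F$ from Corollary \ref{thm.generictopfreeness}, which is also a dense $G_\delta$. Let $(g_n)_n\in F$.
\begin{itemize}
\item[\textbullet] For each nontrivial word $w$, the element $w((g_n)_n)$ is not the identity on any nonzero $c$. In particular it is not the identity on $c=1$, so $w((g_n)_n)\neq e$. Hence the evaluation homomorphism $\Finf\to G$, $x_n\mapsto g_n$, is injective, and the generated subgroup is isomorphic to $\Finf$.
\item[\textbullet] Topological freeness: a nontrivial group element is $w((g_n)_n)$ for some nontrivial $w$. If its fixed point set had nonempty interior, that interior would contain a basic clopen $\Omega_c$ with $c\neq 0$. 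Via Proposition \ref{prop:iso_of_G-semigroups}, the element fixes $\Omega_c$ pointwise exactly when it restricts to the identity on $B\downarrow c$. This contradicts $(g_n)_n\in F_{w,c}$.
\end{itemize}
The only care needed is in the first bullet: injectivity uses that $w((g_n)_n)$ depends only on the element $w$ of $\Finf$ and not on the word representing it, which holds because evaluation is a homomorphism. Hence $D(G)\cap F$ is a dense $G_\delta$ on which all three properties hold, which proves genericity.

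For the ``in particular'' part, first note that $D(G)\cap F$ is nonempty by Baire. Pick $(g_n)_n$ in it and let $H$ be the subgroup it generates, so $H\cong\Finf$, $H$ is countable and dense, and $H$ acts topologically freely. Theorem \ref{thm.minactnocomp} shows that the action of $H$ is minimal and lacks comparison. Two of its hypotheses need checking.
\begin{itemize}
\item[\textbullet] Normalisation: Theorem \ref{thm.minactnocomp} requires $u=2u_0$. Since $M$ is $2$-divisible, any nonzero $u$ can be written as $u=2u_0$ with $u_0\neq0$ (conicality gives $u_0\neq 0$), so the hypothesis is satisfied.
\item[\textbullet] Existence of $M$: a cone satisfying the hypotheses is supplied by Theorem \ref{thm.constructionP} with $N=(\mathbb Q_2)_+$, or by Proposition \ref{thm.constructionQ}.
\end{itemize}
Finally, $\St(B)$ is the Cantor space by Proposition \ref{prop.finiteoratomless}, since $M\not\cong\N$.

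The step I expect to be the main obstacle is already done in Proposition \ref{thm.generictopfreenessnew}, namely the density of the sets $F_{w,c}$. What remains is bookkeeping, and the only subtle point there is the translation between the fixed set having nonempty interior and the element restricting to the identity on some $B\downarrow c$.
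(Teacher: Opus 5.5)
Your proof is correct and follows the paper's argument. You intersect the dense $G_\delta$ sets from Lemma~\ref{lem.dense} and Corollary~\ref{thm.generictopfreeness}, read off injectivity of $\Finf\to H$ and topological freeness from membership in $F$, and invoke Theorem~\ref{thm.minactnocomp} for minimality and failure of comparison. You also make explicit a step the paper leaves implicit: $2$-divisibility lets you write $u=2u_0$, so that Theorem~\ref{thm.minactnocomp} and Proposition~\ref{prop.finiteoratomless} apply. One small correction there: $u_0\neq 0$ follows simply from $2\cdot 0=0$, not from conicality.
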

	\begin{proof}
		A generic element~$(g_n)_n\in\Aut(B,\mu)^\N$ generates a dense subgroup by Lemma \ref{lem.dense} and it is in the subset~$F$ defined in Corollary \ref{thm.generictopfreeness}. Let~$H=\langle g_n\colon n\in \N\rangle$. Then by Theorem \ref{thm.minactnocomp},~$H\acts \St(B)$ is minimal and does not have dynamical comparison. Consider the surjective group homomorphism~$\phi\colon \mathbb F_\infty\to H$ extending the map~$x_i\mapsto g_i$. Then for every nontrivial word~$w\in\mathbb F_\infty$, the condition~$(g_n)_n\in F$ implies that the element~$w((g_n)_n)$ acts topologically freely, that is, it does not fix pointwise any clopen subset. In particular, it is not the identity. This shows that~$\phi$ is injective, hence~$H\cong \mathbb F_\infty$, and that the action $H\acts \St(B)$ is topologically free. 
	\end{proof}
	
	In Theorem \ref{thm.constructionP} we showed the existence of a cone~$P$ satisfying the properties of Theorem \ref{thm.genericnocomp} and admitting a V-homomorphism~$\tau\colon P\to (\mathbb Q_2)_+$. Since~$P$ is not isomorphic to~$\N$, Proposition \ref{prop.finiteoratomless} implies that~$\St(B)$ is homeomorphic to the Cantor space~$X$. The V-homomorphism~$\tau$ induces a measure~$\nu=\tau\circ \mu\colon B\to (\mathbb{Q}_2)_+$. Suppose~$\nu(a)=x_1+x_2$, then by the Vaught property of~$\tau$, we have that~$\mu(a)=y_1+y_2$ for some~$y_1,y_2\in P$ satisfying $\tau(y_i)=x_i$. By the Vaught property of $\mu$, there is a partition $a=a_1\oplus a_2$ with $\mu(a_i)=y_i$, hence $\nu(a_i)=x_i$. This shows that~$\nu$ is a V-measure on~$(\mathbb{Q}_2)_+$. Since the~$(\frac{1}{2},\frac{1}{2})$-Bernoulli measure~$\lambda$ is also a V-measure, by uniqueness of the V-measure \cite[Theorem 4.6.8]{Wehrung:Monoids_Boolean}, there is an automorphism~$\Phi$ of~$B$ such that~$\lambda = \nu\circ\Phi$. Since~$\lambda = \nu \circ \Phi = \tau \circ \mu \circ \Phi$, replacing~$\mu$ 
	by~$\mu \circ \Phi$ and conjugating the action of~$\Aut(B,\mu)$ by~$\Phi$, 
	we obtain an isomorphic copy of~$\Aut(B,\mu)$ inside~$\Aut(B,\lambda)$. 
	Without loss of generality we may therefore assume that every element of 
	$\Aut(B,\mu)$ preserves~$\lambda$, so that every countable subgroup of 
	$\Aut(B,\mu)$ acts on~$\St(B)$ preserving the Bernoulli measure~$\lambda$. Combining this observation with Theorem \ref{thm.genericnocomp} we get:
	
	\begin{cor}
		There exist topologically free minimal actions of~$\mathbb F_\infty$ on the Cantor space that preserve the Bernoulli~$(\frac{1}{2},\frac{1}{2})$-measure and fail dynamical comparison.
	\end{cor}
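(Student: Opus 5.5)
The corollary follows by applying Theorem~\ref{thm.genericnocomp} to the cone~$P$ of Theorem~\ref{thm.constructionP} with~$N=(\mathbb Q_2)_+$, and then transporting the Bernoulli measure along the automorphism~$\Phi$ from the discussion preceding the statement.

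First, I check the hypotheses. Note that~$(\mathbb Q_2)_+$ is a countable, $2$-divisible, simple refinement subcone of~$\mathbb R_+$; refinement follows from the explicit refinement given in the example on dyadic rationals. So Theorem~\ref{thm.constructionP} yields a countable, cancellative, $2$-divisible, simple refinement cone~$P$ that is not almost unperforated, together with a V-homomorphism~$\tau\colon P\to(\mathbb Q_2)_+$.

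Next, I fix a nonzero~$u\in P$ and let~$(B,\mu)$ be the \Fraisse limit of finite~$(P,u)$-measured Boolean algebras. Since~$P$ is not almost unperforated, it is not isomorphic to~$\N$. Hence, by Proposition~\ref{prop.finiteoratomless} (after replacing~$u$ by~$2u$ if needed), $B$ is the Cantor algebra.

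Then I apply Theorem~\ref{thm.genericnocomp}. A generic countable subgroup~$H\leq\Aut(B,\mu)$ is dense, isomorphic to~$\Finf$, and acts topologically freely on~$\St(B)$. By Theorem~\ref{thm.minactnocomp}, the action of~$H$ is minimal and fails dynamical comparison. Since the generic set is dense~$G_\delta$ and therefore nonempty, at least one such~$H$ exists.

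The remaining step, and the one needing care, is invariance of the Bernoulli measure. As argued before the statement, $\nu=\tau\circ\mu$ is a $(\mathbb Q_2)_+$-valued V-measure normalised at~$\tau(u)$. The Bernoulli measure~$\lambda$ is a V-measure normalised at~$1$. To apply Dobbertin's uniqueness theorem \cite[Theorem 4.6.8]{Wehrung:Monoids_Boolean}, the two normalisations must agree. I handle this by choosing~$u$ so that~$\tau(u)=1$. This is possible because~$\tau$ is surjective on nonzero values: $\pi_N$ is onto, since~$(m,1)\in P$ for any nonzero~$m\in M$. I also need~$u=2u_0$ as in Proposition~\ref{prop.finiteoratomless}, so I take~$u_0=(m,\tfrac12)$.

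Dobbertin's theorem then gives an automorphism~$\Phi$ of~$B$ with~$\lambda=\nu\circ\Phi$. Every~$g\in\Aut(B,\mu\circ\Phi)$ preserves~$\tau\circ\mu\circ\Phi=\lambda$. Conjugating by~$\Phi$ identifies~$(B,\mu)$ with~$(B,\mu\circ\Phi)$ and the corresponding actions on~$\St(B)$. Minimality, topological freeness, the isomorphism type~$\Finf$, and the failure of comparison are all invariant under this conjugacy, while the new action preserves~$\lambda$. Finally, identifying~$\St(B)$ with~$\{0,1\}^\N$ through the Stone duality underlying~$\lambda$ gives the claimed action on the Cantor space preserving the Bernoulli~$(\tfrac12,\tfrac12)$ measure.
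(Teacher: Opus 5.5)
Your proposal is correct and follows essentially the same route as the paper: Theorem~\ref{thm.constructionP} with $N=(\mathbb Q_2)_+$, Proposition~\ref{prop.finiteoratomless} to get the Cantor algebra, Theorem~\ref{thm.genericnocomp}, and then Dobbertin's uniqueness theorem to produce $\Phi$ with $\lambda=\tau\circ\mu\circ\Phi$, after which you conjugate. Your explicit choice $u=2u_0$ with $u_0=(m,\tfrac12)$, which gives $\tau(u)=1$, supplies a normalisation detail that the paper leaves implicit but that the uniqueness theorem genuinely requires.
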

	
	Similar properties can also occur in the absence of invariant measures. In Proposition \ref{thm.constructionQ}, we showed the existence of countable divisible simple refinement cones that are not almost unperforated and have no nontrivial states. By Theorem \ref{thm.genericnocomp}, a generic countable subgroup of~$\Aut(B,\mu)$ is isomorphic to~$\mathbb F_\infty$, it is dense and its action on~$\St(B)$ is topologically free, minimal, and fails dynamical comparison. By the proof of Theorem \ref{thm.minactnocomp}, the action of a generic countable subgroup of~$\Aut(B,\mu)$ has the same invariant measures of the action of~$\Aut(B,\mu)$ itself. Since the type semigroup of~$\Aut(B,\mu)\acts \St(B)$ admits no nontrivial state, the action has no invariant probability measures by Remark \ref{rmk:states=measures}, yielding the following result.
	
	\begin{cor}
		There exist topologically free minimal actions of~$\mathbb F_\infty$ on the Cantor space without invariant measures that fail dynamical comparison.
	\end{cor}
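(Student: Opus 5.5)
The corollary asserts the existence of topologically free minimal actions of $\Finf$ on the Cantor space that have no invariant probability measure and fail dynamical comparison. The plan is to run Theorem \ref{thm.genericnocomp} with the cone $Q$ of Proposition \ref{thm.constructionQ}, and then transfer the absence of states for $Q$ to the absence of invariant measures for the countable subgroup.

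\textbf{Step 1: set up the cone and the measure.} Let $Q$ be the countable divisible, hence $2$-divisible, simple refinement cone of Proposition \ref{thm.constructionQ}. It is not almost unperforated and has no nontrivial states. Fix a nonzero $u_0\in Q$, set $u=2u_0$, and let $(B,\mu)$ be the \Fraisse limit of finite $(Q,u)$-measured Boolean algebras. Since $Q$ contains the paradoxical element $\infty$, it is not isomorphic to $\N$. Proposition \ref{prop.finiteoratomless} then makes $B$ the Cantor algebra, so $\St(B)$ is the Cantor space.

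\textbf{Step 2: obtain the generic subgroup.} Theorem \ref{thm.genericnocomp} gives a countable dense subgroup $H\le\Aut(B,\mu)$ with $H\cong\Finf$ acting topologically freely on $\St(B)$. Theorem \ref{thm.minactnocomp} shows this action is minimal and lacks dynamical comparison.

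\textbf{Step 3: rule out invariant probability measures.} First consider the full group. By Remark \ref{rmk.trivialgroupmeas} and Theorem \ref{cor.Mistypesem}, the type semigroup of $\Aut(B,\mu)\acts\St(B)$ is isomorphic to $Q$. By Remark \ref{rmk:states=measures}, an invariant regular Borel probability measure $\lambda$ would give a state $\nu([1_U])=\lambda(U)$ with $\nu([1_{\St(B)}])=1$, which is nontrivial. Since $Q$ has no nontrivial states, $\Aut(B,\mu)\acts\St(B)$ has no invariant probability measure.

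Now pass to $H$, using the density argument from the proof of Theorem \ref{thm.minactnocomp} without the $S_n$ factor. Let $\lambda$ be $H$-invariant. For $g\in\Aut(B,\mu)$ and a clopen $K$, density in the pointwise topology gives $h\in H$ with $hK=gK$, so $\lambda(gK)=\lambda(K)$. A Borel probability measure on a compact metrisable zero-dimensional space is determined by its values on clopens, because clopens form a $\pi$-system generating the Borel $\sigma$-algebra. Hence $g_*\lambda=\lambda$, so $\lambda$ would be $\Aut(B,\mu)$-invariant, which is impossible. Therefore $H$ has no invariant probability measure.

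The only point needing care is this last transfer from clopen sets to all Borel sets, which the $\pi$-system uniqueness argument settles.
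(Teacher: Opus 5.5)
Your proof is correct and follows the paper's argument. You apply Theorem \ref{thm.genericnocomp} to the cone $Q$ of Proposition \ref{thm.constructionQ}, use the density argument from the proof of Theorem \ref{thm.minactnocomp} to see that $H$ and $\Aut(B,\mu)$ have the same invariant measures, and conclude via Remark \ref{rmk:states=measures} that there are none because $Q$ has no nontrivial states. Your $\pi$-system step makes explicit a detail the paper leaves implicit, and it is in fact dispensable: building the state only requires invariance on clopen sets.
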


	
	\subsection{Topologically weakly mixing actions}
	
	We now move our attention to topologically weakly mixing actions that lack dynamical comparison. We show that most of the actions constructed in the previous subsection are topologically weakly mixing when the cones are cancellative.
	
	\begin{defn}
		An action $G\acts X$ is \emph{topologically weakly mixing} if for any non-empty open subsets~$U_1,U_2,V_1,V_2\subseteq X$ there is an element~$g\in G$ such that both~$gU_1\cap V_1$ and~$gU_2\cap V_2$ are nonempty.
	\end{defn}
	
	\begin{lem}\label{lem.formixing}
		Let~$B$ be a Boolean algebra, let~$M$ be a countable~$2$-divisible simple refinement cone, and let~$\mu$ be an~$M$-valued V-measure. Let~$a_1, a_2, b_1, b_2$ be nonzero elements of~$B$.
		Then there exist nonzero elements~$a_i' \leq a_i$ and~$b_i' \leq b_i$ such that
		$\mu(a_i') = \mu(b_i')$ for~$i = 1, 2$, and~$a_1' \wedge a_2' = 0 = b_1' \wedge b_2'$.
	\end{lem}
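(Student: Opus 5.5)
The plan has three stages: first make the two $a$'s disjoint and the two $b$'s disjoint, then shrink everything so that the four measures have a common nonzero lower bound, and finally use the Vaught property to cut out pieces of exactly that common measure.

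\emph{Disjointness.} I would work with $a_1,a_2$, the argument for $b_1,b_2$ being identical. If $a_1\wedge a_2\neq 0$, I use $2$-divisibility together with the Vaught property to split $a_1\wedge a_2$ into two nonzero pieces $e_1\oplus e_2$ with $\mu(e_1)=\mu(e_2)$. Here it matters that $\mu(a_1\wedge a_2)=2y$ for some $y\in M$, and $y\neq 0$ because $M$ is conical. I then set $\tilde a_1=e_1$ and $\tilde a_2=e_2$. If instead $a_1\wedge a_2=0$, I keep $\tilde a_i=a_i$. Either way I obtain nonzero, pairwise disjoint $\tilde a_i\le a_i$ and $\tilde b_i\le b_i$.

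\emph{Common lower bound.} The elements $\mu(\tilde a_1),\mu(\tilde a_2),\mu(\tilde b_1),\mu(\tilde b_2)$ are nonzero in the simple refinement cone $M$. By the downward directedness of $M\setminus\{0\}$ (Wehrung's Lemma 1.5.4, recalled after Definition~\ref{defn:simple}), there is a nonzero $z\in M$ lying below all four of them. Strictly speaking, it is enough to get one common bound $z_1$ for $\mu(\tilde a_1),\mu(\tilde b_1)$ and another $z_2$ for $\mu(\tilde a_2),\mu(\tilde b_2)$, but a single $z$ already handles both indices.

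\emph{Cutting out pieces.} Since $z\le \mu(\tilde a_i)$ in the algebraic order, I can write $\mu(\tilde a_i)=z+r_i$. The Vaught property of $\mu$ then gives a decomposition $\tilde a_i=a_i'\oplus a_i''$ with $\mu(a_i')=z$, and $a_i'\neq 0$ because $z\neq 0$. Doing the same for $\tilde b_i$ yields $b_i'$ with $\mu(b_i')=z$. Disjointness passes to the subelements, so $a_1'\wedge a_2'=0=b_1'\wedge b_2'$, while $a_i'\le a_i$, $b_i'\le b_i$ and $\mu(a_i')=\mu(b_i')$ hold by construction.

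The main obstacle is the common lower bound. Downward directedness of order units requires refinement, and one has to check that in a simple cone every nonzero element is an order unit, so that the cited lemma really applies. Everything else is a direct application of the Vaught property and $2$-divisibility.
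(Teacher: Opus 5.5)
Your proof is correct, but it is organized differently from the paper's.

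The paper works in the opposite order. It first normalizes so that $\mu(a_i)=\mu(b_i)$ for $i=1,2$, using a common nonzero lower bound for each pair and the Vaught property. Only then does it obtain disjointness, through a case analysis on the overlap pattern:
\begin{enumerate}
\item If $b_1\wedge b_2=0$ and $a_1\neq a_2$, it passes to $a_1\setminus a_2$ and re-cuts $b_1$ by the Vaught property to restore the equality of measures.
\item If $b_1\wedge b_2=0$ and $a_1=a_2$, it halves $a_1$ using $2$-divisibility.
\item The mirror cases and the case of two nonzero overlaps are reduced to these.
\end{enumerate}

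You disjointify first and equalize afterwards. Disjointness is achieved uniformly by halving the overlap $a_1\wedge a_2$, which requires no bookkeeping of measures. The equalization step only shrinks elements, so it cannot destroy disjointness. This buys you three things:
\begin{itemize}
\item The case analysis disappears entirely.
\item In particular you avoid the paper's Case~4. There, replacing $a_1$ by $a_1\setminus a_2$ breaks the normalization $\mu(a_1)=\mu(b_1)$, so the first step has to be tacitly redone.
\item You get the slightly stronger conclusion that all four pieces share the single measure $z$.
\end{itemize}
The price is that you always discard $a_i\setminus(a_1\wedge a_2)$ and invoke $2$-divisibility whenever an overlap is nonzero. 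Both are harmless here.

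The step you flag as the main obstacle is immediate in this paper. Simplicity is defined as every nonzero element being an order unit, and the paper records right after that definition that $M\setminus\{0\}$ is downward directed.

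One small correction: $y\neq 0$ does not come from conicality. It holds simply because $y=0$ would give $\mu(a_1\wedge a_2)=2y=0$, which forces $a_1\wedge a_2=0$.
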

	\begin{proof}
		Without loss of generality, we can assume that~$\mu(a_i) = \mu(b_i)$ for~$i = 1, 2$. Indeed, since~$M$ is a simple refinement cone, there is an element~$m\in M$ that is, below both~$\mu(a_i)$ and~$\mu(b_i)$, and then use the Vaught property to replace~$a_i$ and~$b_i$ by smaller elements of measure~$m$. 
		We consider several cases.\\
		\medskip
		\noindent\textit{Case 1:~$b_1 \wedge b_2 = 0$ and~$a_1 \neq a_2$.}
		Without loss of generality,~$a_1 \setminus a_2 \neq 0$; set~$a_1' = a_1 \setminus a_2$. Then
		\[
		\mu(b_1) = \mu(a_1) = \mu(a_1') + \mu(a_1 \wedge a_2).
		\]
		Since~$\mu$ is a V-measure, we may decompose~$b_1 = b_1' \vee b_1''$ with~$b_1', b_1''$
		disjoint and~$\mu(b_1') = \mu(a_1')$. Set~$a_2' = a_2$ and~$b_2' = b_2$. Then
		$a_1' \wedge a_2' = 0$ by construction, and~$b_1' \wedge b_2' = 0$ since~$b_i' \leq b_i$
		and~$b_1 \wedge b_2 = 0$.
		
		\medskip
		\noindent\textit{Case 2:~$b_1 \wedge b_2 = 0$ and~$a_1 = a_2$.}
		In this case~$\mu(a_1) = \mu(b_1) = \mu(b_2)$. Since~$M$ is~$2$-divisible, there exist disjoint
		$a_1', a_2' \leq a_1$ with~$a_1 = a_1' \vee a_2'$ and~$\mu(a_1') = \mu(a_2') = \mu(a_1)/2$.
		Applying the V-measure property to~$b_1$ and~$b_2$ separately, we obtain decompositions
		$b_i = b_i' \vee b_i''$ with~$\mu(b_i') = \mu(a_i')$ for~$i = 1, 2$. Since~$b_1 \wedge b_2 = 0$,
		we have~$b_1' \wedge b_2' = 0$, and by construction~$a_1' \wedge a_2' = 0$.
		
		\medskip
		\noindent\textit{Case 3:~$a_1 \wedge a_2 = 0$.}
		This is symmetric to Cases 1 and 2, with the roles of~$a_i$ and~$b_i$ interchanged.
		
		\medskip
		\noindent\textit{Case 4:~$a_1 \wedge a_2 \neq 0$ and~$b_1 \wedge b_2 \neq 0$.}
		If~$a_1 \neq a_2$, then without loss of generality~$a_1 \setminus a_2 \neq 0$, 
		and we may replace~$a_1$ by~$a_1 \setminus a_2$, reducing to Case 1 or Case 2 
		depending on whether~$b_1 \neq b_2$ or~$b_1 = b_2$. Similarly, if~$b_1 \neq b_2$ 
		we may replace~$b_1$ by~$b_1 \setminus b_2$, reducing to Case 3. It remains to 
		handle the case~$a_1 = a_2$ and~$b_1 = b_2$. By~$2$-divisibility of~$M$, we may 
		decompose~$a_1 = a_1' \vee a_2'$ and~$b_1 = b_1' \vee b_2'$ into disjoint pairs 
		with~$\mu(a_1') = \mu(a_2') = \mu(a_1)/2$ and~$\mu(b_1') = \mu(b_2') = \mu(b_1)/2$, 
		yielding the desired elements.
	\end{proof}
	
	\begin{prop}\label{prop.mixing}
		Let~$M$ be a cancellative~$2$-divisible countable simple refinement cone, and let~$u\in M$. Let ~$(B,\mu)$ be the \Fraisse limit of finite~$(M,u)$-measured Boolean algebras. The action of a generic countable subgroup of~$\Aut(B,\mu)$ on~$\St(B)$ is topologically weakly mixing.
	\end{prop}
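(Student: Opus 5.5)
Since $X=\St(B)$ has the sets $\Omega_a$, $a\in B\setminus\{0\}$, as a countable basis, topological weak mixing of $H=\langle g_n\colon n\in\N\rangle\acts X$ is equivalent to the following. For every quadruple $(a_1,a_2,b_1,b_2)$ of nonzero elements of $B$ there is $h\in H$ with $h a_1\land b_1\neq 0$ and $h a_2\land b_2\neq 0$. For each such quadruple and each word $w\in\Finf$ set
\[
O_{w}(a_1,a_2,b_1,b_2)\defeq\{(g_n)_n\in G^\N\colon w((g_n)_n)a_1\land b_1\neq 0 \text{ and } w((g_n)_n)a_2\land b_2\neq 0\},
\]
and let $O(a_1,a_2,b_1,b_2)$ be the union of these sets over all $w$. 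Each $O_w$ is open, because it is the union over nonzero $a'\leq a_1$, $a''\leq a_2$ of the open sets $\{w((g_n)_n)a'\leq b_1,\ w((g_n)_n)a''\leq b_2\}$, as in the proof of Proposition \ref{thm.generictopfreenessnew}. So $O(a_1,a_2,b_1,b_2)$ is open. The set of sequences generating a weakly mixing action is the countable intersection of the sets $O(a_1,a_2,b_1,b_2)$. By the Baire category theorem it suffices to show that each $O(a_1,a_2,b_1,b_2)$ is dense. In fact we will show that the smaller set $\bigcup_k O_{x_k}$ is dense.

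\emph{Density.} Fix a nonempty basic open set $V=\prod_i U_i$ as in the proof of Lemma \ref{lem:simplifytopfreeness}. Here $U_i$ prescribes the values of $g_i$ on finitely many elements $e_1,\dots,e_m$ for $i\leq I$, and $U_i=G$ for $i>I$. Pick $(g_n)_n\in V$ and choose a coordinate $k>I$. This coordinate is unconstrained, so it suffices to find $g'\in G$ with $g'a_1\land b_1\neq0$ and $g'a_2\land b_2\neq 0$: replacing $g_k$ by $g'$ stays inside $V$ and lands in $O_{x_k}$.

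\emph{Construction of $g'$.} By Lemma \ref{lem.formixing} there are nonzero $a_i'\leq a_i$ and $b_i'\leq b_i$ with $\mu(a_i')=\mu(b_i')$, $a_1'\land a_2'=0$ and $b_1'\land b_2'=0$. Since $M$ is cancellative, Theorem \ref{thm.Fraisseprop} applies to the disjoint families $\{a_1',a_2'\}$ and $\{b_1',b_2'\}$ with matching measures: the complements $1\setminus(a_1'\oplus a_2')$ and $1\setminus(b_1'\oplus b_2')$ have equal measure by cancellation. It gives $g'\in\Aut(B,\mu)$ with $g'(a_i')=b_i'$. Then $g'a_i\land b_i\geq b_i'\neq 0$, which is what we need.

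\emph{Expected obstacle.} Cancellativity is exactly what lets us move two disjoint pieces at once by a single automorphism. Without it, Theorem \ref{thm.Fraisseprop} only yields an automorphism when the complements also match, which explains the hypothesis. Two further points must be handled:
\begin{itemize}
\item Taking a free coordinate $k>I$ avoids any compatibility issue with the constraints defining $V$.
\item The step "$\mu(a_i)$ can be shrunk to a common value" inside Lemma \ref{lem.formixing} relies on simplicity together with refinement (downward directedness of $M\setminus\{0\}$), which is assumed.
\end{itemize}
Finally, we intersect this dense $G_\delta$ set with the generic sets from Lemma \ref{lem.dense} and Corollary \ref{thm.generictopfreeness}. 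This shows that a generic countable subgroup is simultaneously dense, free, topologically free and weakly mixing.
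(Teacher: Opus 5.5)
Your proof is correct and is essentially the paper's argument. You write the weakly mixing sequences as a countable intersection of open sets, use that a basic open set of $\Aut(B,\mu)^\N$ leaves all but finitely many coordinates free, and obtain the single automorphism $g'$ from Lemma \ref{lem.formixing}, cancellativity and Theorem \ref{thm.Fraisseprop}; the only small correction is that in your closing remark ``free'' should mean isomorphic to $\Finf$, since freeness of the action is nowhere established.
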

	
	\begin{proof}
		Let
		\begin{align*}
			W\defeq \{(g_n)_n\in \Aut(B,\mu)^\N &\colon \forall a_1,a_2,b_1,b_2\in B\setminus\{0\}~\exists n\in\N \\
			&\text{such that } g_na_1\cap b_1\neq\emptyset,g_na_2\cap b_2\neq\emptyset\}.
		\end{align*}
		We show that~$W$ is a dense~$G_\delta$ in~$\Aut(B,\mu)^\N$. We can write
		\[
		W=\bigcap_{a_1,a_2,b_1,b_2\in B\setminus \{0\}}\bigcup_{n\in\N}W_{a_1,a_2,b_1,b_2,n},
		\]
		where
		\(
		W_{a_1,a_2,b_1,b_2,n}\defeq \{(g_n)_n\colon g_na_1\cap b_1\neq\emptyset,g_na_2\cap b_2\neq\emptyset\}.
		\)
		Since~$W_{a_1,a_2,b_1,b_2,n}$ is open, it suffices to show that each~$W_{a_1,a_2,b_1,b_2}=\bigcup_{n\in\N}W_{a_1,a_2,b_1,b_2,n}$ is dense. Since basic open sets impose restrictions only on finitely many coordinates, it suffices to show that for each~$a_1,a_2,b_1,b_2\in B$ there is~$g\in\Aut(B,\mu)$ satisfying~$ga_1\cap b_1\neq\emptyset$ and~$ga_2\cap b_2\neq\emptyset$. By Lemma~\ref{lem.formixing}, there exist nonzero~$a_i'\leq a_i$ and 
		$b_i'\leq b_i$ such that~$a_1'\wedge a_2'=0=b_1'\wedge b_2'$ and 
		$\mu(a_1')=\mu(b_1')$,~$\mu(a_2')=\mu(b_2')$. By cancellativity of~$M$ and Theorem \ref{thm.Fraisseprop}, 
		there exists~$g\in\Aut(B,\mu)$ with~$g(a_1')=b_1'$ and~$g(a_2')=b_2'$. 
		Then~$g(a_1)\cap b_1\geq b_1'$, so~$g(a_1)\cap b_1\neq 0$, 
		and similarly~$g(a_2)\cap b_2\neq 0$.
	\end{proof}
	
	Combining Proposition \ref{prop.mixing}, Theorem \ref{thm.minactnocomp}, Theorem \ref{thm.genericnocomp}, and the discussion following the latter, we obtain
	
	\begin{cor}
		There exist topologically free topologically weakly mixing minimal actions of~$\mathbb F_\infty$ on the Cantor space that preserve the Bernoulli~$(\frac{1}{2},\frac{1}{2})$-measure and fail dynamical comparison.
	\end{cor}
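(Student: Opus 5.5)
The plan is to combine three earlier results. The cone $P$ of Theorem \ref{thm.constructionP}, built with $N=(\mathbb Q_2)_+$, is cancellative, $2$-divisible, simple, refinement, countable and not almost unperforated. Let $(B,\mu)$ be the \Fraisse limit of finite $(P,u)$-measured Boolean algebras. By the discussion after Theorem \ref{thm.genericnocomp}, $B$ is the Cantor algebra, and after conjugating by an automorphism $\Phi$ we may assume every element of $\Aut(B,\mu)$ preserves the Bernoulli measure $\lambda$. There is one point to check here: the conjugation replaces $\mu$ by $\mu\circ\Phi$. Since $\Phi$ is a Boolean automorphism, $(B,\mu\circ\Phi)$ is isomorphic to $(B,\mu)$, so it is again the \Fraisse limit, and the genericity statements transfer verbatim through the homeomorphism $\Aut(B,\mu)\cong\Aut(B,\mu\circ\Phi)$. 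I would choose $u=2u_0$ for some nonzero $u_0$, so that Theorem \ref{thm.minactnocomp} applies as stated. Because $P$ is cancellative, this choice does not matter anyway.

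Next I intersect the comeagre sets. Three subsets of $\Aut(B,\mu)^\N$ are dense $G_\delta$:
\begin{enumerate}
\item the set $D(\Aut(B,\mu))$ of sequences generating a dense subgroup, by Lemma \ref{lem.dense};
\item the set $F$ of Corollary \ref{thm.generictopfreeness}, consisting of sequences whose generated group is free on the $g_n$ and acts topologically freely, as shown in the proof of Theorem \ref{thm.genericnocomp};
\item the set $W$ from the proof of Proposition \ref{prop.mixing}, giving topological weak mixing, where cancellativity of $P$ is used.
\end{enumerate}
Since $\Aut(B,\mu)^\N$ is Polish, the Baire category theorem shows that the intersection of these three sets is nonempty.

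Now pick a sequence $(g_n)_n$ in the intersection and set $H=\langle g_n\rangle$. By membership in the first two sets, $H\cong\mathbb F_\infty$, $H$ is dense, and $H$ acts topologically freely. Theorem \ref{thm.minactnocomp} then shows that $H\acts\St(B)$ is minimal and fails dynamical comparison. The action preserves $\lambda$ because every element of $\Aut(B,\mu)$ does. Topological weak mixing follows from membership in $W$: the sets $\Omega_a$ form a basis of $\St(B)$, so it suffices to test the mixing condition on basic clopens, and $W$ provides a single generator $g_n\in H$ that works for both pairs.

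No step should be a genuine obstacle, since everything reduces to earlier results. The only care needed is in the compatibility check of the first paragraph: all three generic properties must be taken in the same Polish group, namely the one conjugated to preserve $\lambda$. I would also verify that the weak mixing set $W$ defined with generators $g_n$ really gives the property for the whole group $H$. This is immediate, because each $g_n$ lies in $H$.
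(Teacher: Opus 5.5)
Your proposal is correct and is essentially the paper's own argument. Like the paper, you take the cancellative cone $P$ of Theorem~\ref{thm.constructionP}, intersect the comeagre sets from Lemma~\ref{lem.dense}, Corollary~\ref{thm.generictopfreeness} and Proposition~\ref{prop.mixing}, and invoke Theorem~\ref{thm.minactnocomp} plus the conjugation discussion after Theorem~\ref{thm.genericnocomp} for Bernoulli invariance. One detail, also left implicit in the paper, should be made explicit: Dobbertin's uniqueness only yields $\lambda=\nu\circ\Phi$ when $\nu(1)=\tau(u)=1$, so choose $u_0$ with $\tau(u_0)=\tfrac12$, e.g.\ $u_0=(m,\tfrac12)$ with $m$ nonzero in the first factor of $P$, rather than an arbitrary nonzero $u_0$.
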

	
	
	\section{Crossed products}\label{sec.cp} 
	
	The minimal actions lacking comparison constructed in Section \ref{sec.minnocomp} can be used to construct simple \(\mathcal{Z}\)-stable crossed products arising from actions that lack dynamical comparison.
	Given an action of~\(\Finf\) on the Cantor space~\(X\), one can take the tensor product of the resulting crossed product with a \(\mathcal{Z}\)-stable C*-algebra~\(A\). The resulting C*-algebra~\(A\otimes (\Finf\ltimes_\red C(X))\) will be \(\Z\)-stable. The C*-algebra~\(A\) can be chosen to be the reduced group C*-algebra of~\(\NF\) which is simple and \(\Z\)-stable. The resulting tensor product will also be a crossed product given by the action of~\(\NF\) on~\(X\) which quotients through the action~\(\Finf\acts X\). This results in a simple \(\Z\)-stable crossed product resulting from an action which lacks dynamical comparison. Since \(\Z\)-stability implies strict comparison of positive elements for simple unital C*-algebras, this gives the first example of a crossed product which has strict comparison but the underlying action lacks dynamical comparison. Moreover, this can happen more severely: both in the presence and absence of traces.
	
	We start by showing the folklore result that~$\Cst_\red(\NF)$ is simple and~$\mathcal{Z}$-stable. We include a proof as we could not find it in the literature.
	
	\begin{lem}\label{prop.directsumiscssimple}
		Let~$(G_n)_{n=1}^\infty$ be a nested sequence of countable discrete groups, and let~$G=\bigcup G_n$ be its direct union. Then~$\cs_\red(G)$ is the direct limit of the algebras~$\cs_\red(G_n)$. In particular, a direct union of~$\cs$-simple groups is~$\cs$-simple.
	\end{lem}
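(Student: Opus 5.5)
The plan is to realise $\cs_\red(G)$ concretely on $\ell^2(G)$ and to show that the subalgebras $\cs_\red(G_n)$ sit inside it isometrically and have dense union; this identifies $\cs_\red(G)$ with the inductive limit. For the inclusion maps I would use the standard fact that for a subgroup $H\leq G$, the closure of $\mathbb C[H]$ inside $\cs_\red(G)$ is canonically isomorphic to $\cs_\red(H)$. The reason is that $\ell^2(G)=\bigoplus_{Hx\in H\backslash G}\ell^2(Hx)$. Each summand is invariant under the left regular representation $\lambda_G$ restricted to $H$, and on each summand this restriction is unitarily equivalent to $\lambda_H$, via the right translation by a coset representative. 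So $\lambda_G|_H\cong\lambda_H\otimes 1$, and for $a\in\mathbb C[H]$ we get $\|\lambda_G(a)\|=\|\lambda_H(a)\|$. Applying this to $G_n\leq G_{n+1}\leq G$ gives compatible isometric $*$-embeddings $\cs_\red(G_n)\hookrightarrow\cs_\red(G_{n+1})\hookrightarrow\cs_\red(G)$.

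Next I would show that $\bigcup_n\cs_\red(G_n)$ is dense in $\cs_\red(G)$. This is immediate, because $\mathbb C[G]=\bigcup_n\mathbb C[G_n]$: every finitely supported function on $G$ is supported in some $G_n$, since the $G_n$ are nested and exhaust $G$. By the universal property of inductive limits, the compatible isometric embeddings induce an isometric $*$-homomorphism $\varinjlim\cs_\red(G_n)\to\cs_\red(G)$. Its image is closed and contains the dense set $\mathbb C[G]$, so it is an isomorphism.

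For the consequence, I would use that an inductive limit of simple $\cs$-algebras with injective (hence isometric) connecting maps is simple. Let $I\subseteq\cs_\red(G)$ be a nonzero closed ideal. Each $I\cap\cs_\red(G_n)$ is a closed ideal of the simple algebra $\cs_\red(G_n)$, so it is either $0$ or everything. The standard estimate for ideals in inductive limits gives $I=\overline{\bigcup_n (I\cap \cs_\red(G_n))}$; one proves it by passing to the quotient $\cs_\red(G)/I$ and using that the induced maps on $\cs_\red(G_n)/(I\cap\cs_\red(G_n))$ are isometric. Since $I\neq 0$, some intersection $I\cap\cs_\red(G_n)$ is nonzero, hence contains the unit of $\cs_\red(G_n)$, which is also the unit of $\cs_\red(G)$. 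Therefore $I=\cs_\red(G)$.

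The only genuinely non-formal step is the isometry of $\cs_\red(H)\hookrightarrow\cs_\red(G)$, that is, the coset decomposition of the regular representation. Alternatively, one can avoid this decomposition by using the canonical faithful traces together with the conditional expectation onto $\cs_\red(H)$. The ideal-intersection fact is standard, and I would cite it rather than reprove it.
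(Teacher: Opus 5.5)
Your proposal is correct and follows essentially the same route as the paper: decompose $\ell^2(G)$ into the cosets of $G_n$ to see that the reduced norms on $\mathbb C[G_n]$ computed in $\ell^2(G_n)$ and in $\ell^2(G)$ agree, and then use that $\mathbb C[G]=\bigcup_n\mathbb C[G_n]$ is dense. The only difference is that you also write out the standard ideal-intersection argument for simplicity of the inductive limit, which the paper leaves implicit in its ``in particular'' clause.
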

	\begin{proof}
		The group algebra~$\mathbb{C}[G]$ is the algebraic direct limit of the algebras~$\mathbb C[G_n]$. Therefore it suffices to show that for all~$n$ the norm~$\left\|\cdot\right\|_{G_n}$ induced on~$\mathbb C[G_n]$ by the left regular representation on~$l^2(G_n)$ is equal to~$\left\|\cdot\right\|_G$, the one induced by the left regular representation on~$l^2(G)$. Since~$G=\bigsqcup_{[k]\in G/G_n} G_nk$, there is a decomposition~$l^2(G)=\bigoplus_{[k]\in G/G_n}l^2(G_nk)$. Given an element~$x\in \mathbb C[G_n]$, the action of~$x$ on~$l^2(G)$ decomposes as a direct sum of actions on~$l^2(G_nk)$, each of norm~$\left\|x\right\|_{G_n}$. Therefore, ~$\left\|x\right\|_{G}=\|x\|_{G_n}$ as desired.
	\end{proof}
	
	\begin{lem}\label{lem.embintensors}
		Let~$A_0$ be a separable unital~$\cs$-algebra,~$A=\bigotimes_{n\in\N}A_0$ and~$A_\infty=\prod_\N A/\bigoplus_\N A$ be the sequence algebra of~$A$. 
		Then~$A_0$ embeds unitally in~$A_\infty\cap A'$, where~$A'$ denotes the commutant of~$A$ in~$A_\infty$.
	\end{lem}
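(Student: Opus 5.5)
The plan is to use the standard ``shift into the tail'' embedding. For each $k\in\N$, let $\iota_k\colon A_0\to A$ be the unital embedding of $A_0$ as the $k$-th tensor factor,
\[
\iota_k(x)=1\otimes\dots\otimes 1\otimes x\otimes 1\otimes\cdots.
\]
Define $\Phi\colon A_0\to A_\infty$ by $\Phi(x)=[(\iota_k(x))_k]$, the class of this bounded sequence in $\prod_\N A/\bigoplus_\N A$. Since each $\iota_k$ is a unital $*$-homomorphism, so is $\Phi$. Since $\|\iota_k(x)\|=\|x\|$ for every $k$, the norm of the class is $\limsup_k\|\iota_k(x)\|=\|x\|$, so $\Phi$ is isometric and hence injective. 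Here we need the tensor product to be such that the factor embeddings are isometric. This holds for the minimal (spatial) tensor product, which I take to be the meaning of $\bigotimes$: it is the inductive limit of the finite tensor products $A_0^{\otimes n}$ with connecting maps $x\mapsto x\otimes 1$, and these are injective.

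It remains to show that $\Phi(A_0)$ commutes with the constant copy of $A$ inside $A_\infty$. First take $a$ an elementary tensor supported on the first $n$ factors, i.e.\ $a=a_1\otimes\dots\otimes a_n\otimes1\otimes\cdots$. For all $k>n$ we have $\iota_k(x)a=a\iota_k(x)$, because the two elements live in different tensor factors. So the sequence $(\iota_k(x)a-a\iota_k(x))_k$ is eventually zero, and therefore $[\Phi(x),a]=0$ in $A_\infty$.

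Next, the set of $a\in A$ with $[\Phi(x),a]=0$ is a closed linear subspace. It contains the algebraic span of the finitely supported elementary tensors, which is dense in $A$ by construction of the infinite tensor product. Hence it is all of $A$, and $\Phi(A_0)\subseteq A_\infty\cap A'$.

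There is no serious obstacle here. The only point needing a remark is the passage from finitely supported tensors to all of $A$: the commutator map is continuous, while the sequence defining $\Phi(x)$ is only eventually commuting with each finitely supported tensor. This is handled by the density argument, since $\|[\Phi(x),a]\|\le 2\|x\|\|a-a'\|$ whenever $a'$ is finitely supported.
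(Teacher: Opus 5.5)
Your proposal is correct and matches the paper's proof: the same tail embedding $\Phi(x)=[(\iota_k(x))_k]$, eventual commutation with finitely supported tensors, and a density argument to reach all of $A$. Your extra details (isometry via the $\limsup$ quotient norm and the estimate $\|[\Phi(x),a]\|\le 2\|x\|\|a-a'\|$) only fill in steps the paper calls standard.
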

	
	\begin{proof}
		For each~$k \geq 1$, let~$\iota_k \colon A_0 \to A$ denote the canonical
		unital~$*$-homomorphism embedding~$A_0$ into the~$k$-th tensor factor:
		\[
		\iota_k(a) = 1 \otimes \cdots \otimes 1 \otimes a \otimes 1 \otimes \cdots,
		\]
		with~$a$ in position~$k$. Since each~$\iota_k$ is an isometric
		$*$-homomorphism, the sequence~$(\iota_k(a))_{k \geq 1}$ induces a unital embedding of~$A_0$ in~$A_\infty$ given by~$\Phi(a)=[\iota_k(a)]_k$.
		Then~$\Phi(A_0)\subseteq A'$, that is,  
		\[
		\lim_{k \to \infty} \|[\iota_k(a), b]\| = 0
		\quad \text{for all } a \in A_0,\ b \in A.
		\]
		Indeed, this follows immediately when~$b$ is in the algebraic tensor product, 
		and the general case follows from a standard density argument.
	\end{proof}
	
	\begin{prop}
		The reduced~$\cs$-algebra~$\cs_\red (\bigoplus_\N \mathbb F_\infty)$ is simple and~$\mathcal Z$-stable. 
	\end{prop}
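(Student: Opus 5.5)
We split the proof into simplicity and $\mathcal Z$-stability, using the two preceding lemmas.

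\emph{Simplicity.} We write $\NF=\bigcup_k G_k$ with $G_k\defeq\bigoplus_{n\le k}\Finf$, a nested sequence of countable groups. By Lemma~\ref{prop.directsumiscssimple} it suffices to show each $G_k$ is $\cs$-simple. The group $\Finf$ is $\cs$-simple by Powers' theorem. A finite direct product of $\cs$-simple groups is $\cs$-simple, and we will cite this. Alternatively, since $\Cst_\red(G\times H)\cong\Cst_\red(G)\otimes_{\min}\Cst_\red(H)$, it follows from the fact that the minimal tensor product of two simple $\cs$-algebras is simple (Takesaki). Then $\Cst_\red(\NF)=\varinjlim\Cst_\red(G_k)$ is simple, as an inductive limit of simple algebras with injective connecting maps.

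\emph{$\mathcal Z$-stability.} We first identify $\Cst_\red(\NF)$ with the infinite minimal tensor product $A\defeq\bigotimes_{n\in\N}A_0$, where $A_0=\Cst_\red(\Finf)$. On each finite stage this is the identity $\Cst_\red(G_k)\cong A_0^{\otimes k}$. Passing to the limit via Lemma~\ref{prop.directsumiscssimple}, the connecting maps match $a\mapsto a\otimes 1$. By Lemma~\ref{lem.embintensors}, $A_0$ then embeds unitally into the central sequence algebra $A_\infty\cap A'$.

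The remaining step is to show that $A_0=\Cst_\red(\Finf)$ contains a unital copy of a dimension-drop algebra $Z_{n,n+1}$ for every $n$; equivalently, that the Jiang--Su algebra $\mathcal Z$ embeds unitally in $A_0$, or at least such a $Z_{n,n+1}$ embeds for each $n$. Granting this, $Z_{n,n+1}$ embeds unitally into $A_\infty\cap A'$ for all $n$. Since $A$ is separable and unital, the standard characterisation of $\mathcal Z$-stability (Toms--Winter / Rørdam--Winter) gives $A\cong A\otimes\mathcal Z$.

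To produce the embedding, we first try to use that $\Cst_\red(\Finf)$ contains $\Cst_\red(\mathbb F_2)$ unitally. By results on strict comparison and stable rank one for reduced free group algebras (Dykema--Haagerup--Rørdam, and more recently Amrutam--Gao--Kunnawalkam Elayavalli--Patchell), there is enough room to find unital copies of $Z_{n,n+1}$. Concretely, one can use a Haar-unitary generator $u$, whose spectrum is the full circle, together with comparison to build an order-zero map $M_n\to A_0$ with the right defect. A cruder alternative would be to use that $A_0$ has a unique trace and strict comparison, since it is simple and exact. Our fallback is to invoke directly that $\Cst_\red(\Finf)$ is $\mathcal Z$-stable, which is known via the selfless property. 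Then $\mathcal Z\subseteq A_0$ unitally, and the previous paragraph applies.

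The main obstacle is exactly this step: exhibiting unital dimension-drop subalgebras of $\Cst_\red(\Finf)$. The tensor-product and central-sequence parts are routine; the input about free group $\cs$-algebras is deep and will have to be cited rather than proved.
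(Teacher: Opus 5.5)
Your overall architecture is the paper's. Simplicity comes from Lemma~\ref{prop.directsumiscssimple}, and your appeal to Takesaki's theorem for the finite stages $G_k$ fills in a step the paper leaves implicit. $\mathcal Z$-stability comes from the Toms--Winter characterisation, the identification $\cs_\red(\NF)\cong\bigotimes_\N\cs_\red(\Finf)$, and Lemma~\ref{lem.embintensors}. The gap is the one non-formal input, which you correctly isolate but do not establish: a unital embedding of $\mathcal Z$ (or of every $Z_{n,n+1}$) into $A_0=\cs_\red(\Finf)$. None of the three justifications you offer works as stated.

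\emph{The fallback is false.} $\cs_\red(\Finf)$ is \emph{not} $\mathcal Z$-stable. It has a unique trace $\tau$, so any isomorphism $A_0\cong A_0\otimes\mathcal Z$ intertwines $\tau$ with $\tau\otimes\tau_{\mathcal Z}$. Passing to GNS closures would then give $L(\Finf)\cong L(\Finf)\mathbin{\bar\otimes}\mathcal R$, a McDuff factor, but free group factors do not have property Gamma. Selflessness gives strict comparison and stable rank one, not $\mathcal Z$-stability. This failure is exactly why the paper passes from $\Finf$ to $\NF$ at all.

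\emph{The ``cruder alternative'' uses an invalid implication.} It rests on ``simple and exact $\Rightarrow$ strict comparison'', which is false in general: Villadsen-type algebras are simple and nuclear without strict comparison. Strict comparison of $\cs_\red(\Finf)$ is true, but it is a recent nontrivial theorem. Even granting it, unique trace plus strict comparison does not by itself produce an order zero map $\varphi\colon M_n\to A_0$ with $1-\varphi(1)\precsim\varphi(e_{11})$ in this projectionless algebra.

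\emph{The Haar-unitary sketch is not an argument.} It never says how to obtain the required $n$ orthogonal, mutually equivalent positive elements with the right trace. So, as written, the decisive step rests on heuristics.

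The paper closes this step with a citation: Thiel--Winter's Proposition~4.2, that $\mathcal Z$ embeds unitally into $\cs_\red(\Finf)$. Compose this with the unital embedding $A_0\hookrightarrow A_\infty\cap A'$ from Lemma~\ref{lem.embintensors}. That gives a unital embedding of $\mathcal Z$ into the central sequence algebra, and Toms--Winter then gives $\mathcal Z$-stability directly. No detour through dimension-drop algebras is needed.
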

	
	\begin{proof}
		Simplicity easily follows from Lemma \ref{prop.directsumiscssimple} and~$\cs$-simplicity of~$\mathbb F_\infty$, so we focus on~$\mathcal{Z}$-stability. By a result of Toms--Winter \cite[Theorem 2.2]{Toms-Winter:Strongly_self-abs_Cstar_algs}, a unital separable~$\cs$-algebra~$A$ is~$\mathcal Z$-stable if and only if the Jiang--Su algebra~$\mathcal Z$ embeds unitally in the central sequence algebra~$\left(\prod_\N A/\bigoplus_\N A\right)\cap A'$. Since~$\mathcal Z$ unitally embeds in~$\cs(\mathbb F_\infty)$ by \cite[Proposition 4.2]{Thiel-Winter:Generator_problem}, it suffices to show that~$\cs(\mathbb F_\infty)$ unitally embeds in the central sequence algebra of~$\cs(\bigoplus_\N\mathbb F_\infty)$. This follows from Lemma \ref{lem.embintensors} by noticing that~$\cs(\bigoplus_\N \mathbb F_\infty)=\bigotimes_\N \cs(\mathbb F_\infty)$.
	\end{proof}
	
	The previous proposition shows that~$\bigoplus_\N \mathbb F_\infty$ is~$\cs$-simple. By \cite[Theorem 1.8]{BKKO_C*simplicity}, minimal actions of~\(\NF\) give rise to simple reduced crossed products. Moreover,~$\bigoplus_\N\mathbb{F}_\infty$ is an exact group. By using the Kirchberg--Wasserman definition of exact groups \cite{KW_ExactGroups}, it is easy to see that crossed products arising from actions of exact groups are exact as~$\cs$-algebras.
	
	\begin{thm}
		There exist minimal actions of~$\bigoplus_\N\mathbb F_\infty$ on the Cantor space~\(X\) that fail dynamical comparison and such that either: 
		\begin{enumerate}
			\item~$\bigoplus_\N\mathbb F_\infty\ltimes_{\red}C(X)$ is simple, exact,~$\mathcal Z$-stable and with a trace, hence it is stably finite; or
			\item~$\bigoplus_\N\mathbb F_\infty\ltimes_{\red }C(X)$ is simple, exact,~$\mathcal Z$-stable and traceless, hence it is purely infinite.
		\end{enumerate}
	\end{thm}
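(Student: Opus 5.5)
The plan is to take one of the topologically free minimal actions \(\Finf\acts X\) without dynamical comparison from Section~\ref{sec.minnocomp} (Bernoulli-measure-preserving for (1), coming from the cone \(P\); no invariant probability measure for (2), coming from the cone \(Q\)). We then let \(\Gamma\defeq\NF\) act on \(X\) through the projection \(\pi\colon\Gamma\to\Finf\) onto the first summand. The kernel \(N=\bigoplus_{n\ge2}\Finf\) acts trivially, and \(\Gamma\cong\Finf\times N\) with \(N\cong\NF\). Hence
\[
\Gamma\ltimes_\red C(X)\cong(\Finf\ltimes_\red C(X))\otimes\cs_\red(N)\cong(\Finf\ltimes_\red C(X))\otimes\cs_\red(\NF),
\]
using the standard identification of the reduced crossed product of a product group acting trivially in one factor. (Minimal tensor products are used throughout.)

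Minimality and failure of comparison pass to \(\Gamma\) because everything is defined through orbits of \(\Gamma\), and these coincide with those of \(\Finf\). Indeed, the sets \(\Inv(X,\Gamma)\) and \(\Inv(X,\Finf)\) coincide, so the type semigroups and the invariant measures agree. The clopens \(A,B\) witnessing failure of comparison for \(\Finf\) therefore also witness it for \(\Gamma\).

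Next come the structural properties. \textbf{Simplicity} holds because \(\Gamma\) is \(\cs\)-simple (previous proposition) and the action is minimal; this is \cite[Theorem 1.8]{BKKO_C*simplicity}. \textbf{Exactness}: \(\Gamma\) is exact, being a direct union of finite products of free groups, so the crossed product is exact. \textbf{\(\Z\)-stability}: the tensor decomposition gives \((\Finf\ltimes_\red C(X))\otimes\cs_\red(\NF)\), and the second factor is \(\Z\)-stable by the previous proposition. Therefore the whole algebra satisfies \(D\otimes\Z\cong D\) with \(D\) the crossed product, since \(\cs_\red(\NF)\otimes\Z\cong\cs_\red(\NF)\) and the tensor product is associative.

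For the trace dichotomy in case (1), the \(\Gamma\)-invariant Bernoulli measure \(\lambda\) gives a trace \(\tau_\lambda\circ E\) on \(\Finf\ltimes_\red C(X)\), via the canonical conditional expectation \(E\). Tensoring with the canonical trace on \(\cs_\red(\NF)\) then gives a faithful tracial state, faithful by simplicity. A simple unital \(\Z\)-stable exact algebra with a trace is stably finite; one way to see this is that a faithful tracial state on a simple algebra rules out infinite projections in matrix amplifications.

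In case (2), the action has no invariant probability measure. Every tracial state \(\tau\) on the crossed product restricts to a \(\Gamma\)-invariant state on \(C(X)\), i.e.\ to an invariant probability measure, so no tracial state exists. Then a simple, unital, exact, \(\Z\)-stable, traceless \(\cs\)-algebra is purely infinite. This follows from strict comparison for simple \(\Z\)-stable algebras (R\o rdam): with no (quasi)traces, since exactness makes quasitraces traces by Haagerup, every nonzero positive element is properly infinite, which is pure infiniteness.

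The main obstacle I expect is the bookkeeping in the crossed-product tensor identification. One must check that the trivial action of \(N\) really splits off as \(\cs_\red(N)\), which follows from \(\lambda_\Gamma=\lambda_{\Finf}\otimes\lambda_N\) on \(\ell^2(\Gamma)\otimes L^2\). One must also cite correctly that the absence of traces forces pure infiniteness in the exact \(\Z\)-stable setting. Everything else reduces to earlier results.
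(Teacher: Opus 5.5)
Your proposal is correct and follows the paper's proof. You let $\NF$ act through the first coordinate and identify the crossed product with $(\Finf\ltimes_\red C(X))\otimes_{\min}\cs_\red(\NF)$. From there, simplicity comes from $\cs$-simplicity plus minimality, exactness from exactness of the group, $\Z$-stability from the second tensor factor, and the presence or absence of traces from the presence or absence of invariant measures. Your faithful-trace argument for stable finiteness and your almost-unperforation argument for pure infiniteness simply unpack the stably finite/purely infinite dichotomy for simple exact $\Z$-stable algebras that the paper cites.
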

	\begin{proof}
		Let~$\alpha',\beta' \colon \mathbb F_\infty\acts X$ be minimal actions lacking dynamical comparison such that $\alpha'$ has an invarant measure, while $\beta'$ has none (the existence of such actions was shown in Section \ref{sec.minnocomp}).
		Consider the actions~$\alpha,\beta\colon \bigoplus_\N\mathbb F_\infty\acts X$ given by~$\alpha(g_0, g_1,\dots)x=\alpha'(g_0)x$, and~$\beta(g_0, g_1,\dots)x=\beta'(g_0)x$. These actions are minimal and lack dynamical comparison. 
		Moreover,~$\alpha$ has an invariant measure, while~$\beta$ has none.
		
		The reduced crossed products resulting from the actions are
		\begin{align*}
			A\defeq\NF\ltimes_{\alpha,r} C(X)&=(\Finf\ltimes_{\alpha',r} C(X))\otimes_{\min}\cs\big(\NF\big)\quad\text{and}\\
			B\defeq\NF\ltimes_{\beta,r} C(X)&=(\Finf\ltimes_{\beta',r} C(X))\otimes_{\min}\cs\big(\NF\big).
		\end{align*}
		
		These crossed products are simple by minimality of the actions and C*-simplicity of the acting group (see \cite[Theorem 1.8]{BKKO_C*simplicity}).
		Further, they are \(\Z\)-stable since so is~$\cs(\bigoplus_\N\mathbb F_\infty)$. 
		The C*-algebra~\(A\) has a trace owing to~\(\alpha\) admitting an invariant measure.
		Similarly,~\(B\) admits no trace.
		Stable finiteness and pure infiniteness follow from exactness of the crossed products and Kirchberg's dichotomy (see \cite[Theorem 4.4.10]{Ror_Classification}).
	\end{proof}
	
	Unital simple~$\mathcal{Z}$-stable~$\cs$-algebras are known to have almost unperforated Cuntz semigroup by a result of R\o{rdam} \cite[Theorem 4.5]{Rordam:Z-stablestrictcomp}. Since in this setting almost unperforation of the Cuntz semigroup is equivalent to strict comparison by \cite[Remark 9.2 (3)]{Thiel:Ranks-of-operators}, we get:
	
	\begin{cor}
		There exist minimal actions of~$\bigoplus_\N\mathbb F_\infty$ on the Cantor space that fail dynamical comparison and such that the crossed product is simple and has strict comparison. This phenomenon can happen both when the crossed product is stably finite and when it is purely infinite. In particular, strict comparison of the simple crossed product does not imply dynamical comparison of the action.
	\end{cor}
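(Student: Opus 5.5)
The corollary should follow by combining the preceding theorem with two results quoted just before it. I will take the actions $\alpha,\beta\colon\NF\acts X$ built there, obtained by letting $\NF$ act through the first coordinate via the $\Finf$-actions $\alpha',\beta'$ of Section~\ref{sec.minnocomp}. Both are minimal and fail dynamical comparison.

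First I will check that failure of comparison really passes from $\alpha'$ to $\alpha$. The orbits of $\alpha$ and $\alpha'$ coincide. The sets $g_1W_1,\dots,g_nW_n$ appearing in dynamical subequivalence are the same whether one uses $g_i\in\NF$ or its first coordinate in $\Finf$. The invariant probability measures also coincide. Hence the pairs of clopens witnessing failure of comparison for $\alpha'$ also witness it for $\alpha$, and the same holds for $\beta$.

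By the theorem, $A=\NF\ltimes_{\alpha,\red}C(X)$ and $B=\NF\ltimes_{\beta,\red}C(X)$ are unital, simple and $\Z$-stable. Unitality holds since $X$ is compact. Moreover, $A$ is stably finite and $B$ is purely infinite.

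Next I apply R\o rdam's theorem \cite[Theorem 4.5]{Rordam:Z-stablestrictcomp}. It gives that the Cuntz semigroups of $A$ and $B$ are almost unperforated. By \cite[Remark 9.2 (3)]{Thiel:Ranks-of-operators}, for unital simple C*-algebras almost unperforation of the Cuntz semigroup is equivalent to strict comparison, so both $A$ and $B$ have strict comparison.

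For $B$, which has no traces, strict comparison should be read with the convention that the condition over traces is vacuous. It should then amount to every nonzero positive element dominating every other in the Cuntz order. This is consistent with $B$ being purely infinite, so there is no conflict. The last sentence of the corollary then follows at once, since either $\alpha$ or $\beta$ is a minimal action whose simple crossed product has strict comparison while the action lacks dynamical comparison.

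The only step needing care is the quoted equivalence in the traceless case, together with the correct reading of strict comparison when no traces exist. I expect both to go through using Thiel's remark as stated, so I do not anticipate a real obstacle.
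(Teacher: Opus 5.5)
Your proposal is correct and follows the paper's own argument: you take the actions $\alpha,\beta$ of the preceding theorem, use R\o rdam's Theorem 4.5 to get an almost unperforated Cuntz semigroup for the unital simple $\mathcal Z$-stable crossed products, and use Thiel's Remark 9.2(3) to turn that into strict comparison, covering both the stably finite and the purely infinite case. Your extra check that failure of comparison passes from $\alpha'$ to $\alpha$ is sound, since both actions are implemented by the same set of homeomorphisms and so have the same invariant measures and the same dynamical subequivalence; the paper only asserts this step.
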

	
	This corollary complements the result obtained by Naryshkin in \cite[Remark 3.4 (ii)]{Naryshkin_Pol-growth-comparison-SBP}, where it is shown that dynamical comparison of the action does not imply strict comparison of the crossed product.

	
	\bibliography{thebibliography}
	
\end{document}